\providecommand{\U}[1]{\protect\rule{.1in}{.1in}}
\newtheorem{theorem}{Theorem}[section]
\newtheorem{corollary}[theorem]{Corollary}
\newtheorem{definition}[theorem]{Definition}
\newtheorem{example}[theorem]{Example}
\newtheorem{lemma}[theorem]{Lemma}
\newtheorem{problem}[theorem]{Problem}
\newtheorem{proposition}[theorem]{Proposition}
\newtheorem{remark}[theorem]{Remark}
\numberwithin{equation}{section}
\begin{document}

\title{The Feller property on Riemannian manifolds}
\author{Stefano Pigola}
\address{Dipartimento di Fisica e Matematica\\
Universit\`a dell'Insubria - Como\\
via Valleggio 11\\
I-22100 Como, ITALY}
\email{stefano.pigola@uninsubria.it}

\author{Alberto G. Setti}
\address{Dipartimento di Fisica e Matematica
\\
Universit\`a dell'Insubria - Como\\
via Valleggio 11\\
I-22100 Como, ITALY}
\email{alberto.setti@uninsubria.it}

\subjclass[2010]{58J05, 58J35, 58J65, 31B35}
\maketitle

\begin{abstract}
The asymptotic behavior of the heat kernel of a Riemannian manifold gives rise
to the classical concepts of parabolicity, stochastic completeness (or
conservative property) and Feller property (or $C^{0}$-diffusion property).
Both parabolicity and stochastic completeness have been the subject of a
systematic study which led to discovering not only sharp geometric conditions
for their validity but also an incredible rich family of tools, techniques and
equivalent concepts ranging from maximum principles at infinity, function
theoretic tests (Khas'minskii criterion), comparison techniques etc... The
present paper aims to move a number of steps forward in the development of a
similar apparatus for the Feller property.

\end{abstract}

\section{Introduction\label{section introduction}}

This paper is a contribution to the theory of Riemannian manifolds satisfying
the Feller (or $C_{0}$-diffusion) property for the Laplace-Beltrami operator.
Since the appearence of the beautiful, fundamental paper by R. Azencott,
\cite{Az-bsmf}, new insights into such a theory (for the Laplace operator) are
mainly confined into some works by S.T. Yau, \cite{Yau-ind}, J. Dodziuk,
\cite{Do-indiana}, P. Li and L. Karp, \cite{LK}, E. Hsu, \cite{Hsu-annals},
\cite{Hsu-book}, E.B. Davies, \cite{Da-jam}. These papers, which have been
extended to more general classes of diffusion operators (see e.g.
\cite{Q-AnnalsProb}, \cite{Li-JMPA}) are devoted to the search of optimal
geometric conditions for a manifold to enjoy the Feller property. In fact,
with the only exception of Davies', the geometric conditions are always
subsumed to Ricci curvature lower bounds. The methods employed to reach their
results range from estimates of solutions of parabolic equations (Dodziuk,
Yau, Li-Karp) up to estimates of the probability of the Brownian motion on $M$
to be found in certain regions before a fixed time (Hsu). The probabilistic
approach, which led to the best known condition on the Ricci tensor, relies on
a result by Azencott (see also \cite{Hsu-book}) according to which \textit{$M$
is Feller if and only if, for every compact se $K$ and for every $t_{0}>0$,
the Brownian motion $X_{t}$ of $M$ issuing from $x_{0}$ enters $K$ before the
time $t_{0}$ with a probability that tends to zero as $x_{0}\rightarrow\infty
$.} \bigskip

Our point of view will be completely deterministic and, although parabolic
equations will play a key role in a number of crucial sections, it will
most often depend on elliptic equation theory.\bigskip

Beside, and closely related, to the Feller property one has the notions of
parabolicity and stochastic completeness. Recall that $M$ is said to be
parabolic if every bounded above subharmonic function must be constant.
Equivalently, the\ (negative definite) Laplace-Beltrami operator\ $\Delta$ of
$M$ does not possesses a minimal, positive Green kernel. From the
probabilistic viewpoint, $M$ is parabolic if the Brownian motion $X_{t}$
enters infinitely many times a fixed compact set, with positive probability
(recurrence). We also recall that $M$ is conservative or stochastically
complete if, for some (hence any) constant $\lambda>0,$ every bounded,
positive $\lambda$-subharmonic function on $M$ must be identically equal to
$0$. Here, $\lambda$-subharmonic means that $\Delta u\geq\lambda u$.
Equivalently, $M$ has the conservative property if the heat kernel of $M$ has
mass identically equal to $1$. From the probabilistic viewpoint stochastic
completeness means that, with probability $1$, the Brownian paths do not
explode to $\infty$ in a finite time. Clearly a parabolic manifold is
stochastically complete.

Both parabolicity and stochastic completeness have been the subject of a
systematic study which led to discovering not only sharp geometric conditions
for their validity (in fact, volume growth conditions) but also an incredible
rich family of tools, techniques and equivalent concepts ranging from maximum
principles at infinity, function theoretic tests (Khas'minskii criterion),
comparison techniques etc... The interested reader can consult e.g. the
excellent survey paper by A. Grigor'yan, \cite{Gr-bams}. See also
\cite{PRS-memoirs}, \cite{PRS-revista} for the maximum principle
perspective.\bigskip

The present paper aims to move a number of steps forward in the development of
a similar apparatus for the Feller property. Originally we also thought we would adopt
an elliptic point of view. While, in many instances, this proves to be the most
effective approach (for instance, in obtaining comparison results, or in the treatment of ends),
in some cases it is not clear how to implement it, and we have to resort to the
parabolic point of view (for instance studying minimal surfaces, or Riemannian coverings).

To make the treatment more readable, we decide to include the proofs of some of the basic results that
are crucial in the development of the theory. Sometimes, we shall use a
somewhat different perspective and more straightforward arguments. In fact,
our attempt is to use a geometric slant from the beginning of the theory, most notably in
interpreting the one dimensional case in terms of model manifolds.

\section{Heat semigroup and the Feller property\label{section definitions}}

Hereafter, $\left(  M,\left\langle \,,\right\rangle \right)  $ will always
denote a connected, non-necessarily complete Riemannian manifold of dimension
$\dim M=m$ and without boundary. Further requirements on $M$ will be specified
when needed. The (negative definite) Laplace-Beltrami operator of $M$ is
denoted by $\Delta$. With this sign convention, if $M=\mathbb{R}$, then
$\Delta=d^{2}/dx^{2}$. Recall from the fundamental work by J. Dodziuk,
\cite{Do-indiana}, that $M$ possesses a positive, minimal heat kernel
$p_{t}\left(  x,y\right)  $, i.e., the minimal, positive solution of the
problem%
\begin{equation}
\left\{
\begin{array}
[c]{l}%
\Delta p_{t}=\dfrac{\partial p_{t}}{\partial t},\medskip\\
p_{0+}\left(  x,y\right)  =\delta_{y}\left(  x\right)  .
\end{array}
\right.  \label{1}%
\end{equation}
Minimality means that if $q_{t}\left(  x,y\right)  $ is a second, positive
solution of (\ref{1}), then $p_{t}\left(  x,y\right)  \leq q_{t}\left(
x,y\right)  $. According to Dodziuk construction, $p_{t}\left(  x,y\right)  $
is obtained as%
\[
p_{t}\left(  x,y\right)  =\lim_{n\rightarrow+\infty}p_{t}^{\Omega_{n}}\left(
x,y\right)  ,
\]
where, having fixed any smooth, relatively compact exhaustion $\Omega
_{n}\nearrow M$, $p_{t}^{\Omega_{n}}\left(  x,y\right)  $ is the solution of
the Dirichlet boundary value problem%
\[
\left\{
\begin{array}
[c]{l}%
\Delta p_{t}^{\Omega_{n}}=\dfrac{\partial p_{t}^{\Omega_{n}}}{\partial
t}\text{, on }\Omega_{n}\medskip\\
p_{t}^{\Omega_{n}}\left(  x,y\right)  =0\text{, if }x\in\partial\Omega
_{n}\text{ or }y\in\partial\Omega_{n}\medskip\\
p_{0+}^{\Omega_{n}}\left(  x,y\right)  =\delta_{y}\left(  x\right)  .
\end{array}
\right.
\]
The following properties hold:

\begin{itemize}
\item[(a)] $p_{t}\left(  x,y\right)  \geq0$ is a symmetric function of $x$ and
$y$.

\item[(b)] $\int_{M}p_{t}\left(  x,z\right)  p_{s}\left(  z,y\right)
dz =p_{t+s}\left(  x,y\right)  $, $\forall t,s>0$,
and $\forall x,y\in M$

\item[(c)] $\int_{M}p_{t}\left(  x,y\right)  dy\leq1$, $\forall t>0$ and
$\forall x\in M$.

\item[(d)] For every bounded continuous function $u$ on $M$
set
\begin{equation*}
P_{t} u(x) = \int_{M}p_{t}\left(  x,y\right)  u\left(  y\right)
dy,
\end{equation*}
then $P_t u(x)$ satisfies the heat equation on $M\times (0,+\infty)$. Moreover, by (b) and
(c), $P_t$ extends to a contraction semigroup on every $L^p$,
called the heat semigroup of $M$.
\end{itemize}

From the probabilistic viewpoint, $p_{t}\left(  x,y\right)  $ represents the
transition probability density of the Brownian motion $X_{t}$ of $M$. In this
respect, property $(c)$ stated above means that $X_{t}$ is, in general,
sub-Markovian. In case the equality sign holds for some (hence any) $t>0$ and
$x\in M$ we say that $M$ is stochastically complete.\bigskip

Set $C_{0}\left(  M\right)  =\left\{  u:M\rightarrow\mathbb{R}\text{
continuous}:u\left(  x\right)  \rightarrow0\text{, as }x\rightarrow
\infty\right\}  .$

\begin{definition}
The Riemannian manifold $\left(  M,\left\langle ,\right\rangle \right)  $ is
said to satisfies the $C_{0}$-diffusion property or, equivalently, it
satisfies the Feller property, if%
\begin{equation}
P_{t} u(x)
\rightarrow0\text{, as }x\rightarrow
+\infty\label{2}%
\end{equation}
for every $u\in C_{0}\left(  M\right)  $.
\end{definition}

Using property $(c)$ of the heat kernel and a cut-off argument one can easily
prove the following

\begin{lemma}
\label{lemma_compsupp}Assume that $M$ is geodesically complete. Then $M$ is
Feller if and only if (\ref{2}) holds for every non-negative function $u\in
C_{c}\left(  M\right)  $.
\end{lemma}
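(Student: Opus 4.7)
The plan is a standard density/approximation argument that reduces the test class from $C_{0}(M)$ to the seemingly smaller class of nonnegative functions in $C_{c}(M)$. The essential analytic input is property $(c)$, which makes the heat semigroup an $L^{\infty}$-contraction; this allows sup-norm errors on the input to translate directly into sup-norm errors on the output of $P_{t}$, so that any $C_{0}$-function may be replaced, up to $\varepsilon$, by a compactly supported one.

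The forward direction is immediate, since every nonnegative $u\in C_{c}(M)$ lies in $C_{0}(M)$, for which \eqref{2} holds by assumption.

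For the converse, suppose \eqref{2} is known for all nonnegative $u\in C_{c}(M)$ and take an arbitrary $u\in C_{0}(M)$. Using $|P_{t}u(x)|\leq P_{t}|u|(x)$, I may assume $u\geq 0$. Fix $\varepsilon>0$. The super-level set
\[
K_{\varepsilon}=\{x\in M:u(x)\geq\varepsilon\}
\]
is closed, and since $u\in C_{0}(M)$ it is contained in a compact set, hence is itself compact. By Urysohn's lemma on the locally compact Hausdorff space $M$, choose a continuous cut-off $\phi$ with compact support, $0\leq\phi\leq 1$, and $\phi\equiv 1$ on $K_{\varepsilon}$, and set $v=\phi u$. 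Then $v\in C_{c}(M)$, $v\geq 0$, and by construction $0\leq u-v\leq\varepsilon$ on all of $M$.

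Splitting $P_{t}u=P_{t}v+P_{t}(u-v)$, the standing hypothesis gives $P_{t}v(x)\to 0$ as $x\to\infty$, while property $(c)$ yields
\[
0\leq P_{t}(u-v)(x)\leq\varepsilon\int_{M}p_{t}(x,y)\,dy\leq\varepsilon.
\]
Hence $\limsup_{x\to\infty}P_{t}u(x)\leq\varepsilon$, and since $\varepsilon>0$ was arbitrary, $M$ is Feller. There is no real obstacle in the argument; geodesic completeness does not enter in an essential way, but only tacitly through the standard interpretation of ``$x\to\infty$'' as escape from every compact set, which via Hopf--Rinow is realized by exhaustion by closed metric balls.
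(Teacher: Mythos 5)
Your proof is correct and follows essentially the same route as the paper's: approximate the $C_{0}$ function in sup norm by a nonnegative compactly supported one and use property $(c)$ (the $L^{\infty}$-contractivity of $P_{t}$) to control the remainder, the only cosmetic differences being your use of Urysohn cut-offs on super-level sets versus the paper's geodesic-ball cut-offs $\xi_{R}$ and its splitting $v=v_{+}-v_{-}$. Your closing observation is also accurate: the paper's version does invoke completeness only through the exhaustion by metric balls, which your variant avoids.
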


\begin{proof}
Indeed, suppose that (\ref{2}) holds for every $0\leq u\in C_{c}\left(
M\right)  $. Let $v\in C_{0}\left(  M\right)  $ and define $v_{\pm}\left(
x\right)  =\max\left\{  0,\pm v\left(  x\right)  \right\}  \in C_{0}\left(
M\right)  $ so that $v\left(  x\right)  =v_{+}\left(  x\right)  -v_{-}\left(
x\right)  $. For every $R>0$, fix a cut off function $0\leq\xi_{R}\leq1$
satisfying $\xi_{R}=1$ on $B_{R}\left(  o\right)  $, and $\xi_{R}=0$ on
$M\backslash B_{2R}$. Then,%
\begin{align*}
(P_tv_{\pm})\left(  x\right) &  = (P_t \left(  \xi_{R}v_{\pm}\right))(x) +
(P_t \left(  (1-\xi_{R}) v_{\pm}\right))(x)\\
& \leq
(P_t \left(  \xi_{R}v_{\pm}\right))(x) +\sup_{M\backslash
B_{R}\left(  o\right)  }v_{\pm}.
\end{align*}
Since $0\leq\xi_{R}v_{\pm}\in C_{c}\left(  M\right)  $, letting $x\rightarrow\infty
$, we deduce%
\[
\lim_{x\rightarrow\infty}
(P_t v_\pm) (x)
\leq\sup_{M\backslash B_{R}\left( o\right)  }v_{\pm}.
\]
Since this inequality holds for every $R>0$, letting $R\rightarrow+\infty$ we
conclude%
\[
\lim_{x\rightarrow\infty}
(P_t v_\pm) (x) = 0
\]
The converse is obvious.
\end{proof}

As a trivial consequence of Lemma \ref{lemma_compsupp}, we point out the
following interesting characterization.

\begin{corollary}
\label{cor_compsupp}The geodesically complete manifold $M$ is Feller if and
only if, for some $p\in M$ and for every $R>0$,%
\[
\int_{B_{R}\left(  p\right)  }p_{t}\left(  x,y\right)  dy
\rightarrow0\text{, as }x\rightarrow+\infty.
\]

\end{corollary}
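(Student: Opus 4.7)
The plan is to reduce both directions to Lemma \ref{lemma_compsupp}, observing that the quantity $\int_{B_{R}(p)}p_{t}(x,y)\,dy$ is exactly $P_{t}\mathbf{1}_{B_{R}(p)}(x)$. The obstacle is that $\mathbf{1}_{B_{R}(p)}$ is not continuous, so the Lemma cannot be applied to it directly; both directions therefore require sandwiching it between compactly supported continuous functions.

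For the ``if'' direction, I would take an arbitrary $0\le u\in C_{c}(M)$ and pick $R>0$ large enough that $\mathrm{supp}(u)\subset B_{R}(p)$ (here I use that $M$ is geodesically complete so that closed balls are compact, hence compact sets lie in large balls about the reference point). Then $P_{t}u(x)\le \|u\|_{\infty}\int_{B_{R}(p)}p_{t}(x,y)\,dy$, and the assumed decay forces $P_{t}u(x)\to 0$ as $x\to\infty$. By Lemma \ref{lemma_compsupp} this is equivalent to the Feller property.

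For the ``only if'' direction, assume $M$ is Feller and fix $R>0$ and $p\in M$. Choose a cut-off $\xi\in C_{c}(M)$ with $0\le \xi\le 1$, $\xi\equiv 1$ on $B_{R}(p)$ and $\mathrm{supp}(\xi)\subset B_{2R}(p)$; such a $\xi$ exists since $\overline{B_{R}(p)}$ is compact and contained in the open set $B_{2R}(p)$ by completeness. Positivity of the heat kernel then gives
\[
\int_{B_{R}(p)}p_{t}(x,y)\,dy \;\le\; \int_{M} p_{t}(x,y)\,\xi(y)\,dy \;=\; P_{t}\xi(x),
\]
and $P_{t}\xi(x)\to 0$ as $x\to\infty$ by the Feller assumption applied to $\xi\in C_{0}(M)$.

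The main (very mild) obstacle is the continuity gap noted above; once it is bridged by a cut-off $\xi$ on one side and by using $u\le \|u\|_{\infty}\mathbf{1}_{B_{R}(p)}$ on the other, the argument is immediate. Note also that the ``for some $p$'' in the statement is harmless: any two reference points differ by a bounded distance, so a ball about one is contained in a suitably larger ball about the other, and the decay property transfers between choices of basepoint.
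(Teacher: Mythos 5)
Your proof is correct and is exactly the argument the paper has in mind when it calls the corollary a ``trivial consequence'' of Lemma \ref{lemma_compsupp}: identify the integral as $P_t\mathbf{1}_{B_R(p)}(x)$ and sandwich the indicator by a compactly supported continuous cut-off on one side and by $\|u\|_\infty\mathbf{1}_{B_R(p)}$ on the other. No gaps; the remarks on completeness (closed balls compact) and basepoint independence are the right ones.
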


\section{Elliptic exterior boundary value problems and the Feller property\label{section exterior}}

This section is crucial for the development of most parts of the paper.
As we will recall below, following Azencott, the Feller
property can be characterized in terms of asymptotic properties of solutions
of exterior boundary value problems. In this direction, a basic step is
represented by the following

\begin{theorem}
\label{th_ext1}Let $\Omega\subset\subset M$ be a smooth open set and let
$q:M\rightarrow\mathbb{R}$ be a smooth function, $q\left(  x\right)
\geq0$.
Then, the problem%
\begin{equation}
\left\{
\begin{array}
[c]{l}%
\Delta h=q\left(  x\right)  h\text{ on }M\backslash\bar{\Omega}\\
h=1\text{ on }\partial\Omega\\
h>0\text{ on }M\backslash\Omega,
\end{array}
\right.  \label{ext1}%
\end{equation}
has a (unique) minimal smooth solution $h:M\backslash\Omega\rightarrow
\mathbb{R}$. Necessarily, $0<h\left(  x\right)  \leq1$. Furthermore, $h$ is
given by%
\begin{equation}
h\left(  x\right)  =\lim_{n\rightarrow+\infty}h_{n}\left(  x\right)  ,
\label{ext2}%
\end{equation}
where, for any chosen \ relatively compact, smooth exhaustion $\Omega
_{n}\nearrow M$ with $\Omega\subset\subset\Omega_{1}$, $h_{n}$ is the solution
of the boundary value problem%
\begin{equation}
\left\{
\begin{array}
[c]{l}%
\Delta h_{n}=q\left(  x\right)  h_{n}\text{ on }\Omega_{n}\backslash
\bar{\Omega}\\
h_{n}=1\text{ on }\partial\Omega\\
h_{n}=0\text{ on }\partial\Omega_{n}.
\end{array}
\right.  \label{ext3}%
\end{equation}

\end{theorem}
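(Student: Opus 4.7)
The plan is to construct $h$ as a monotone limit of the approximating solutions $h_n$, using nothing more than the classical theory of second order linear elliptic equations with zero-order term of the "correct sign" ($q \geq 0$, so the operator $L = \Delta - q$ obeys a weak maximum principle).

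First I would check that each $h_n$ exists, is smooth, and satisfies $0 < h_n \leq 1$. Solvability on the smooth, relatively compact domain $\Omega_n \setminus \bar\Omega$ with prescribed continuous Dirichlet data is standard (e.g. Perron's method or $L^2$ theory followed by Schauder regularity) since $q \geq 0$ makes the Dirichlet form coercive. Because $\Delta h_n - q h_n = 0$ with $q \geq 0$, the weak maximum principle gives $h_n \leq \max_{\partial}\max(h_n,0) = 1$ and (applied to $-h_n$) $h_n \geq 0$; the strong maximum principle / Hopf lemma then upgrades this to $0 < h_n < 1$ in the interior.

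Next I would establish monotonicity $h_n \leq h_{n+1}$ on $\Omega_n \setminus \bar\Omega$. The difference $w = h_{n+1} - h_n$ solves $\Delta w = qw$ on $\Omega_n \setminus \bar\Omega$ with $w = 0$ on $\partial\Omega$ and $w = h_{n+1} > 0$ on $\partial\Omega_n$; the weak maximum principle for $L = \Delta - q$ then gives $w \geq 0$. Together with the uniform bound $h_n \leq 1$, monotonicity yields a pointwise limit $h = \lim_n h_n$ with $0 < h \leq 1$ on $M \setminus \Omega$. Interior and boundary Schauder estimates (applied on fixed relatively compact neighborhoods of any $x \in M \setminus \bar\Omega$ and of $\partial\Omega$) show that the convergence is in $C^{2,\alpha}_{\text{loc}}$ up to $\partial\Omega$, so $h$ is smooth, satisfies $\Delta h = qh$ on $M \setminus \bar\Omega$, and attains the boundary value $1$ on $\partial\Omega$; positivity $h \geq h_1 > 0$ on compacta in $M \setminus \bar\Omega$ is automatic from monotonicity.

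Finally, for minimality I would fix any other positive smooth solution $v$ of (\ref{ext1}) and compare it with $h_n$ on $\Omega_n \setminus \bar\Omega$: the difference $v - h_n$ satisfies $\Delta(v - h_n) = q(v - h_n)$ with boundary values $v - h_n = 0$ on $\partial\Omega$ and $v - h_n = v > 0$ on $\partial\Omega_n$, so the same weak maximum principle gives $v \geq h_n$; letting $n \to \infty$ yields $v \geq h$. This proves minimality and, in particular, the uniqueness of the minimal solution. The proof is therefore reduced entirely to weak/strong maximum principles for $L = \Delta - q$ plus standard elliptic regularity; the main technical point one must not overlook is that the sign condition $q \geq 0$ is exactly what makes the maximum principle go through at every step, both for the existence of $h_n$ and for the comparison arguments producing monotonicity and minimality.
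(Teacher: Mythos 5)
Your proposal is correct and follows essentially the same route as the paper's proof: the exhaustion by $\Omega_n$, the maximum principle for $\Delta-q$ with $q\geq 0$ giving $0<h_n<1$ and monotonicity, elliptic estimates for local convergence to a smooth solution, and comparison with an arbitrary solution on $\Omega_n\setminus\Omega$ to get minimality. You have merely spelled out the standard details that the paper compresses into ``by the maximum principle'' and ``by standard arguments.''
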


\begin{proof}
Indeed, the sequence $h_{n}$ is increasing and $0< h_{n}<1 $ in $\Omega
_{n}\setminus\overline\Omega$ by the maximum principle. By standard arguments,
$h_{n}$ together with all its derivatives converges, locally uniformly in
$M\setminus\overline\Omega$, to a solution of (\ref{ext2}). If $\tilde h$ is
another solution of (\ref{ext2}), then, again by the maximum principle
$h_{n}\leq\tilde h$ on $\Omega_{n}\setminus\Omega$, and passing to the limit
$h\leq\tilde h$, showing that $h$ is minimal.
\end{proof}

The following result, originally due to Azencott, builds up the bridge between
the parabolic and the elliptic viewpoints. For the sake of completeness we
provide a direct proof.

\begin{theorem}
\label{th_equiv}The following are equivalent:

\begin{enumerate}
\item $M$ is Feller.

\item For some (hence any) open set $\Omega\subset\subset M$ with smooth
boundary and for some (hence any) constant $\lambda>0$, the minimal solution
$h:M\backslash\Omega\rightarrow\mathbb{R}$ of the problem%
\begin{equation}
\left\{
\begin{array}
[c]{l}%
\Delta h=\lambda h\text{ on }M\backslash\bar{\Omega}\\
h=1\text{ on }\partial\Omega\\
h>0\text{ on }M\backslash\Omega,
\end{array}
\right.  \label{equiv1}%
\end{equation}
satisfies%
\begin{equation}
h\left(  x\right)  \rightarrow0\text{, as }x\rightarrow\infty. \label{equiv2}%
\end{equation}

\end{enumerate}
\end{theorem}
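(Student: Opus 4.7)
The plan is to prove the two implications by exploiting a direct correspondence between the minimal $h$ and the heat semigroup via the Laplace transform $(\lambda - \Delta)^{-1}$, on the one hand, and the fact that if $\Delta h = \lambda h$ then $w(t,x)=e^{\lambda t}h(x)$ solves the heat equation, on the other hand. These two observations supply, respectively, a supersolution for problem (\ref{equiv1}) from the parabolic side, and a parabolic supersolution for $P_t u$ from the elliptic side.

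For $(1)\Rightarrow(2)$, I would fix $\phi\in C_{c}(M)$ with $\phi\geq 0$ and $\phi\equiv\mu$ on a neighbourhood of $\overline{\Omega}$, with $\mu>0$ to be chosen, and set
$$H(x)=\int_{0}^{\infty}\lambda e^{-\lambda t}P_{t}\phi(x)\,dt.$$
Differentiating under the integral (justified by $0\leq P_{t}\phi\leq\|\phi\|_{\infty}$) and integrating by parts in $t$ using $\Delta P_t\phi=\partial_t P_t\phi$, one gets $\Delta H=\lambda H-\lambda\phi$. In particular, $H\geq 0$ is a classical supersolution of $\Delta u=\lambda u$ on $M$. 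The continuity of $\phi$ and uniform continuity at $t=0$ of $P_{t}\phi$ on the compact set $\partial\Omega$ give $P_{t}\phi\geq\mu/2$ there for $t\in[0,t_{0}]$ sufficiently small, whence $H\geq(\mu/2)(1-e^{-\lambda t_{0}})$ on $\partial\Omega$. Choosing $\mu$ so that this quantity is $\geq 1$ and comparing the approximating $h_{n}$ of Theorem \ref{th_ext1} with $H$ on $\Omega_{n}\setminus\overline{\Omega}$ via the maximum principle for $\Delta-\lambda$ (which has no positive interior maximum), we obtain $h_{n}\leq H$, and hence $h\leq H$ in the limit. Finally, the Feller hypothesis gives $P_{t}\phi(x)\to 0$ as $x\to\infty$ for each $t$, and dominated convergence (majorant $\lambda e^{-\lambda t}\|\phi\|_{\infty}$) gives $H(x)\to 0$, so (\ref{equiv2}) follows.

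For $(2)\Rightarrow(1)$, by Lemma \ref{lemma_compsupp} it suffices to prove $P_{t}u(x)\to 0$ for every $0\leq u\in C_{c}(M)$. A brief preliminary step shows that the "some" hypothesis propagates to every larger smooth relatively compact open set: if $\Omega\supset\overline{\Omega_{0}}$, then $h_{\Omega_{0}}/\min_{\partial\Omega}h_{\Omega_{0}}$ is a positive solution of $\Delta v=\lambda v$ with $v\geq 1$ on $\partial\Omega$, so the extended minimality discussed after Theorem \ref{th_ext1} gives $h_{\Omega}\leq h_{\Omega_{0}}/\min_{\partial\Omega}h_{\Omega_{0}}$, and the right-hand side tends to $0$ at infinity. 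Having fixed such an $\Omega\supset\overline{\Omega_{0}}\cup\operatorname{supp}u$ and normalised $\|u\|_{\infty}\leq 1$, I would compare, on the bounded cylinder $(\Omega_{n}\setminus\overline{\Omega})\times[0,T]$, the Dodziuk Dirichlet semigroup $\tilde P_{t}^{\Omega_{n}}u$ with $w_{n}(t,x)=e^{\lambda t}h_{n}(x)$. Both solve the heat equation, and on the parabolic boundary: at $t=0$, $\tilde P_{0}^{\Omega_{n}}u=u=0$ on $M\setminus\Omega\supset\Omega_{n}\setminus\overline{\Omega}$ while $w_{n}(0,\cdot)=h_{n}\geq 0$; on $\partial\Omega_{n}$, both vanish; on $\partial\Omega$, $\tilde P_{t}^{\Omega_{n}}u\leq 1\leq e^{\lambda t}=w_{n}$. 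The classical parabolic maximum principle on the bounded cylinder yields $\tilde P_{t}^{\Omega_{n}}u\leq e^{\lambda t}h_{n}$; passing to the limit $n\to\infty$ gives $P_{t}u(x)\leq e^{\lambda t}h_{\Omega}(x)\to 0$ as $x\to\infty$.

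The main non-routine point is the identification of the right pair of "elliptic" and "parabolic" comparison functions: the Laplace transform $H$ turns the Feller decay into elliptic decay of a supersolution, while the product $e^{\lambda t}h$ is exactly what converts the elliptic minimal solution back into a parabolic comparison function that matches the heat semigroup's behaviour at the three pieces of the parabolic boundary. The only quantitative delicate step is forcing $H\geq 1$ on $\partial\Omega$, which dictates the choice of the constant $\mu$ through uniform continuity of the semigroup at $t=0^{+}$; the $\lambda$-independence claimed in the statement then drops out automatically by composing the two implications once both directions are established.
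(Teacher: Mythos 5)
Your proposal is correct and follows essentially the same route as the paper: for $(1)\Rightarrow(2)$ both use the Laplace transform (resolvent) of the heat semigroup applied to a non-negative bump function to manufacture a $\lambda$-supersolution that is bounded below on $\partial\Omega$ and tends to zero at infinity, then dominate $h_n$ by the elliptic maximum principle; for $(2)\Rightarrow(1)$ both compare the Dirichlet approximations of $P_t u$ with $e^{\lambda t}h$ (suitably normalized) via the parabolic maximum principle on exhausting cylinders. The only differences are bookkeeping: you force $H\geq 1$ on $\partial\Omega$ by choosing $\mu$ via continuity of $P_t\phi$ at $t=0^+$ and enlarge $\Omega$ to contain $\operatorname{supp}u$, whereas the paper divides by $\inf_{\partial\Omega}w>0$ and instead absorbs $\operatorname{supp}u$ by a multiplicative constant $C$ on $\partial\Omega_1$.
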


\begin{proof}
Assume that $M$ is Feller, so that the heat semigroup $P_t$ maps $C_{0}(M)$ into
itself. Let $\Omega$ be a relatively compact open set with smooth boundary and
let $\lambda>0.$ We choose a continuous function $u\geq0$ with support
contained in $\Omega$ and let $u_{t} = P_{t} u (x)$ be the solution of the
heat equation with initial data $u$, so that $u_{t}>0$ on $M$ by the parabolic
maximum principle. Next set
\[
w(x) =\int_{0}^{\infty}u_{t}(x) e^{-\lambda t} dt.
\]
Note that since $P_{t}$ is contractive on $L^{\infty}$ the integral is well
defined. Moreover, $w(x)\to0 $ as $x\to\infty$. Indeed, it suffices to show
that $w(x_{n})\to0$ for every sequence $x_{n}\to\infty$. Since $M$ is assumed
to be Feller, for every $t\geq0$, $u_{t}(x_{n}) \to0$ as $n\to\infty$, and the
required conclusion follows by dominated convergence.

Differentiating under the integral we obtain
\[%
\begin{split}
\Delta w(x)  &  = \int_{0}^{+\infty} e^{-\lambda t}\Delta u_{t} = \int
_{0}^{+\infty} e^{-\lambda t}\partial_{t} u_{t}\\
&  = -u(x) + \int_{0}^{+\infty} \lambda e^{-\lambda t} u_{t} = -u(x) + \lambda
w(x),
\end{split}
\]
so that $w$ satisfies
\[
\Delta w \leq \lambda w \quad\text{ in } M\setminus\Omega.
\]
Since $C=\inf_{\partial\Omega} w>0$, $v=C^{-1} w \geq1$ on $\partial\Omega$,
$v(x) \to0$ as $x\to\infty$. It follows that if $h_{n}$ is a sequence as in
Theorem~\ref{th_ext1}, $1=h_{n}\leq v$ on $\partial\Omega$, and $0=h_{n}<v$ on
$\partial\Omega_{n}$, so $h_{n}\leq v$ on $\Omega_{n}\setminus\Omega$, and
passing to the limit $0<h\leq v$ on $M\setminus\Omega$. Since $v$ tends to
zero as $x\to\infty$.

For the converse, assume that for a given relatively compact open set $\Omega$
and $\lambda>0$, the minimal solution $h$ of (\ref{equiv1}) satisfies
$h(x)\to0$ as $x\to\infty.$ As noted above, in order to verify that $M$ is
Feller it suffices to show that $P_{t}$ maps non-negative compactly supported
functions into $C_{0}(M)$. So let $u$ be such a function. We consider an
exhaustion $\Omega_{n}$ of $M$ with relatively compact domains with smooth
boundary such that $\Omega\cup\text{supp}\, u\subset\subset\Omega_{1}$, and
let $p_{t}^{\Omega_{n}}$ be the Dirichlet heat kernel of $\Omega_{n},$
$p_{t}^{\Omega_{n}}(x,y)\nearrow p_{t}(x,y)$, and therefore $u_{n,t}=
P_{t}^{\Omega_{n}} u\nearrow u_{t}=P_{t} u$. Moreover, since $p_{t}^{\Omega
_{n}}$ vanishes if either $x$ or $y$ lie on $\partial\Omega_{n}$ $u_{n,t}=0$
on $\partial\Omega_{n}\times[0,+\infty)$, and since the initial datum vanishes
outside $\Omega_{1}$, for every $n>1$, $u_{n,0}=0$ in $\Omega_{n}%
\setminus\Omega_{1}$.

Now we fix $t>0$. Since $h$ is strictly positive on $M\setminus\Omega$, there
exists a constant $C$ such that $Ch(x)\geq u_{s}(x)\geq u_{n,s}(x)$ for every
$x\in\partial\Omega_{1}$ and $s\in\lbrack0,t]$. It follows that the function
$v_{t}=Ch(x)e^{\lambda t}$ is a solution of the heat equation on
$M\setminus\Omega$ which satisfies $v_{s}\geq u_{n,s}$ on $(\Omega
_{n}\setminus\Omega_{1})\times\{0\}\cup\partial(\Omega_{n}\setminus\Omega
_{1})\times\lbrack0,t]$. By the parabolic maximum principle $v_{t}(x)\geq
u_{n,t}(x)$ in $\Omega_{n}\setminus\Omega_{1}$, and, letting $n\rightarrow
\infty$, $u_{t}(x)\leq Ce^{\lambda x}h(x)$. Since $h(x)\rightarrow0$ as
$x\rightarrow\infty$ we conclude that so does $u_{t}$, as required.
\end{proof}

\begin{remark}
\label{rmk-uniqueness}
It is worth pointing out that the elliptic characterization of the Feller
property involves the minimal solution to problem (\ref{equiv1}) and
not a generic solution. In fact, even on Feller manifolds, there could exist
infinitely many positive solutions which are asymptotically non-zero. This
fact is easily verified on a model manifold, and will be seen in Section~\ref{section monotonicity}.
On the other hand we will see in Section~\ref{section comparison} that on a stochastically complete
manifold the minimal solution is  the only one bounded positive solution.
\end{remark}

\section{Feller property on rotationally symmetric
manifolds\label{section models}}
We recall the following

\begin{definition}
\label{def_model}Let $g:\mathbb{R}\rightarrow\mathbb{R}$ be a smooth, odd
function satisfying $g^{\prime}\left(  0\right)  =1$, $g\left(  r\right)  >0$
for $r>0$. A (complete, non-compact) model manifold with warping function $g$
is the $m$-dimensional Riemannian manifold%
\begin{equation}
M_{g}^m=\left(  [0,+\infty)\times \mathbb{S}^{m-1},dr^{2}+g\left(  r\right)  ^{2}%
d\theta^{2}\right)  \label{ext4}%
\end{equation}
where $d\theta^{2}$ stands for the standard metric on the $\left(  m-1\right)
$-dimensional sphere $\mathbb{S}^{m-1}$. We refer to the origin $o\in M_{g}$ as the
pole of the model. The $r$-coordinate in the polar decomposition of
the metric represents the distance from $o$.
\end{definition}

It is well known that necessary and sufficient conditions for $M_{g}^{m}$ to
be parabolic or stochastically complete are expressed in terms of the solely
warped function $g$; see e.g. \cite{Gr-bams} and references therein. More
precisely, the model manifold $M_{g}^{m}$ is parabolic if and only if%
\[
\frac{1}{g^{m-1}}\notin L^{1}\left(  +\infty\right)
\]
whereas $M_{g}^{m}$ is stochastically complete if and only if%
\[
\frac{\int_{0}^{r}g^{m-1}\left(  t\right)  dt}{g^{m-1}\left(  r\right)
}\notin L^{1}\left(  +\infty\right)  .
\]
This section aims to provide a similar characterization for the validity of
the Feller property on $M_{g}^{m}$, \ thus completing the picture.

The next result will enable us to use model manifolds as test and comparison
spaces for the validity of the Feller property.

\begin{theorem}
\label{th_ext2}Let $M_{g}^{m}$ be an $m$-dimensional model manifold with
warping function $g$. Let $q\left(  r\left(  x\right)  \right)  \geq0$ be a
smooth, rotationally symmetric function. Let $h$ be the minimal solution of
the problem%
\begin{equation}
\left\{
\begin{array}
[c]{l}%
\Delta h=q\left(  r\right)  h\text{ on }M_{g}\backslash B_{R_{0}}\\
h=1\text{ on }\partial B_{R_{0}}\\
h>0\text{ on }M_{g}\backslash B_{R_{0}},
\end{array}
\right.  \label{ext5}%
\end{equation}
where $B_{R_{0}}$ is the metric ball of radius $R_{0}>0$, centered at the pole
of $M_{g}$. Then $h$ is rotationally symmetric.
\end{theorem}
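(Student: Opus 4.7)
My plan is to exploit the minimality clause of Theorem~\ref{th_ext1} together with the rotational symmetry of the data. The isometry group $SO(m)$ acts on $M_g^m$ fixing the pole $o$ and preserving the level sets of $r$; since $q$ depends only on $r(x)$, every $\phi\in SO(m)$ commutes with the operator $\Delta - q$ and preserves both $B_{R_0}$ and the boundary datum $h=1$ on $\partial B_{R_0}$. Therefore, if $h$ is the minimal solution of (\ref{ext5}), the composition $h\circ\phi$ is again a positive smooth solution of the same exterior problem, and minimality gives $h \le h\circ\phi$ on $M_g\setminus B_{R_0}$. Applying the same inequality with $\phi^{-1}$ in place of $\phi$ yields $h\le h\circ\phi^{-1}$, i.e.\ $h\circ\phi\le h$, so that $h\circ\phi = h$ identically. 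Since $SO(m)$ acts transitively on each geodesic sphere $\partial B_r(o)$, this forces $h$ to depend only on $r$, which is the desired rotational symmetry.

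A more constructive variant is to work directly with the approximating sequence $h_n$ in (\ref{ext3}), taking the exhaustion $\Omega_n = B_n(o)$. Each $h_n$ then solves a Dirichlet problem on the annulus $B_n\setminus\bar B_{R_0}$ in which the operator $\Delta - q$, the domain, and the boundary data are all $SO(m)$-invariant. Because $q\ge 0$, the standard weak maximum principle gives uniqueness for this Dirichlet problem; consequently $h_n\circ\phi = h_n$ for every $\phi\in SO(m)$, and $h_n$ is rotationally symmetric. Equivalently, $h_n(r)$ can be obtained by solving the radial ODE
\begin{equation*}
u''(r) + (m-1)\,\frac{g'(r)}{g(r)}\,u'(r) = q(r)\,u(r),\qquad r\in[R_0,n],
\end{equation*}
with boundary values $u(R_0)=1$, $u(n)=0$, and lifting the result back to the annulus. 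Taking the monotone limit $n\to\infty$ in (\ref{ext2}) transfers the symmetry to $h$.

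Neither route presents a serious obstacle. The only point requiring mild attention is, in the minimality argument, to verify that $h\circ\phi$ belongs to the class of positive smooth solutions that $h$ is supposed to minimize; this is automatic because $\phi$ is a smooth isometry preserving the boundary datum, the equation, and positivity, so the comparison $h\le h\circ\phi$ can legitimately be run in both directions. In the constructive variant, the only ingredient beyond the ODE theory is the uniqueness of solutions to the annular Dirichlet problem for $\Delta - q$ with $q\ge 0$, which is a direct consequence of the weak maximum principle.
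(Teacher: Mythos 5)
Your proposal is correct, and your second (constructive) variant is exactly the paper's proof: radialize each approximant $h_n$ on the annulus $B_{R_n}\setminus\bar B_{R_0}$ via uniqueness from the maximum principle, then pass to the limit. Your first route, invoking minimality against $h\circ\phi$ and $h\circ\phi^{-1}$ directly, is a valid and slightly slicker packaging of the same symmetry observation.
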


\begin{proof}
Let $R_{n}$ be an increasing sequence with $R_{1}>R_{0}$, and for every $n$
let $h_{n}$ be the solution of the problem
\begin{equation}%
\begin{cases}
\Delta h_{n}=q(r(x))h_{n}\text{ on }\,B_{R_{n}}\setminus\overline{B}_{R_{0}} &
\\
h_{n}=1\text{ on }\partial B_{R_{0}} & \\
h_{n}=0\text{ on }\partial B_{R_{n}}, &
\end{cases}
\label{ext6}%
\end{equation}
so that $h=\lim_{n}h_{n}$. By the maximum principle, the solution to
(\ref{ext6}) is unique, and since coefficients and boundary data are
rotationally symmetric, so is $h_{n}$. Passing to the limit we conclude that
$h$ is rotationally symmetric.
\end{proof}

Combining Theorem \ref{th_equiv} with Theorem \ref{th_ext2}, and recalling that
the Laplacian of a radial function $u\left(  r\left( x\right)  \right)  $
on $M_{g}^{m}$ is given by
\[
\Delta u=u^{\prime\prime}+\left(  m-1\right)  \frac{g^{\prime}}{g}u^{\prime},%
\]
we immediately deduce the following important

\begin{corollary}
\label{cor_equiv}Let $M_{g}^{m}$ be an $m$-dimensional model manifold with
warping function $g$. Then $M_{g}^{m}$ is Feller if and only if, for some
(hence any) $R_{0}>0$, the minimal solution of the following o.d.e. problem%
\begin{equation}
\left\{
\begin{array}
[c]{l}%
h^{\prime\prime}+\left(  m-1\right)  \dfrac{g^{\prime}}{g}h^{\prime}=\lambda
h\text{, on }[R_{0},+\infty)\\
h\left(  R_{0}\right)  =1\\
h\left(  r\right)  >0\text{ on }[R_{0},+\infty),
\end{array}
\right.  \label{equiv3}%
\end{equation}
satisfies%
\begin{equation}
\lim_{r\rightarrow+\infty}h\left(  r\right)  =0. \label{equiv4}%
\end{equation}
\end{corollary}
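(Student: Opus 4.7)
The plan is to simply combine the two main ingredients already established and translate the resulting PDE statement into an ODE statement via the explicit form of the radial Laplacian on a model.

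First, I would apply Theorem~\ref{th_equiv} with $\Omega = B_{R_0}$ (the metric ball around the pole, whose boundary is smooth): $M_g^m$ is Feller if and only if, for some (hence any) $\lambda>0$, the minimal solution $h$ of the exterior problem
\[
\Delta h = \lambda h \text{ on } M_g^m\setminus\bar B_{R_0},\quad h=1\text{ on }\partial B_{R_0},\quad h>0,
\]
tends to zero at infinity. Second, I would invoke Theorem~\ref{th_ext2} (with $q\equiv\lambda$) to conclude that this minimal solution is radial, i.e.\ $h(x)=h(r(x))$ for some function on $[R_0,+\infty)$.

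Next, I would substitute into the PDE using the well-known expression
\[
\Delta u = u'' + (m-1)\frac{g'}{g} u'
\]
for the Laplacian of a radial function on $M_g^m$, which immediately turns the PDE into the ODE appearing in \eqref{equiv3}. The boundary value $h(R_0)=1$ and the positivity $h(r)>0$ on $[R_0,+\infty)$ are inherited directly. Moreover, in this rotationally symmetric setting, $x\to\infty$ in $M_g^m$ is equivalent to $r(x)\to+\infty$, so the asymptotic condition \eqref{equiv2} translates into \eqref{equiv4}.

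The only point that requires a line of care is the identification of the ``minimal solution'' on each side: the $h_n$ constructed in Theorem~\ref{th_ext1} on $B_{R_n}\setminus\bar B_{R_0}$ are themselves radial by uniqueness of the Dirichlet problem, hence they solve exactly the truncated ODE boundary value problem on $[R_0,R_n]$ with $h_n(R_0)=1$, $h_n(R_n)=0$; their monotone limit is simultaneously the minimal PDE solution and the minimal ODE solution, so the two notions agree. I do not expect any genuine obstacle here, beyond this brief matching of the two minimality notions; everything else is a direct quote of the preceding two theorems.
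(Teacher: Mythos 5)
Your proposal is correct and follows exactly the paper's route: the authors deduce the corollary by combining Theorem~\ref{th_equiv} with Theorem~\ref{th_ext2} and the radial Laplacian formula, just as you do. Your extra remark matching the two notions of minimality (via the radial $h_n$ of Theorem~\ref{th_ext1}) is a sound elaboration of a point the paper leaves implicit.
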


In his fundamental paper, \cite{Az-bsmf}, Azencott gave necessary and
sufficient conditions for a $1$-dimensional diffusion to satisfy the Feller
property. These conditions concern with the coefficients of the corresponding
diffusion operator. On the other hand, in Corollary \ref{cor_equiv}, we showed
that the Feller property on a model manifold $M_{g}^{m}$ can be reduced to
that of a special $1$-dimensional diffusion. Therefore, we are able to give
the following geometric interpretation of Azencott result.

\begin{theorem}
\label{th_model}Let $M^m_{g}$ be an $m$-dimensional model manifold with warping
function $g$. Then $M^m_{g}$ is Feller if and only if either%
\begin{equation}
\frac{1}{g^{m-1}\left(  r\right)  }\in L^{1}\left(  +\infty\right)
\label{model1}%
\end{equation}
or%
\begin{equation}
\text{(i) }\frac{1}{g^{m-1}\left(  r\right)  }\notin L^{1}\left(
+\infty\right)  \text{\qquad and\qquad(ii) }\frac{\int_{r}^{+\infty}%
g^{m-1}\left(  t\right)  dt}{g^{m-1}\left(  r\right)  }\notin L^{1}\left(
+\infty\right)  . \label{model2}%
\end{equation}

\end{theorem}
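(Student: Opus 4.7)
The plan is to reduce everything to the one-dimensional ODE provided by Corollary \ref{cor_equiv}, so that Feller is equivalent to asking that the minimal positive solution $h$ of $(vh')'=\lambda v h$ on $[R_{0},+\infty)$ with $h(R_{0})=1$ satisfies $h(r)\to 0$, where for brevity I write $v(r)=g^{m-1}(r)$. A preliminary observation is that each approximant $h_{n}$ (solving the two-point problem with $h_{n}(R_{n})=0$) is non-increasing: since $(vh_{n}')'=\lambda v h_{n}\ge 0$, the quantity $vh_{n}'$ is non-decreasing, and if it were positive at some point it would stay positive up to $R_{n}$, forcing $h_{n}(R_{n})>0$ and contradicting the boundary value. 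Passing to the limit, $h$ is non-increasing and I set $L=\lim_{r\to+\infty}h(r)\in[0,1]$, so that the Feller property reads $L=0$.

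I would next split according to whether $1/v\in L^{1}(+\infty)$ (the non-parabolic case, condition \eqref{model1}) or not. In the non-parabolic case the function $\psi(r)=\int_{r}^{+\infty}v(t)^{-1}\,dt$ satisfies $\Delta\psi=(v\psi')'/v=0$, is positive, and vanishes at infinity, hence $(\Delta-\lambda)(\psi/\psi(R_{0}))<0$. The weak maximum principle for the operator $\Delta-\lambda$ (whose zero-order coefficient is non-positive), applied on each annulus $[R_{0},R_{n}]$, gives $h_{n}\le\psi/\psi(R_{0})$; letting $n\to+\infty$ and then $r\to+\infty$ yields $L=0$. Thus \eqref{model1} alone implies Feller.

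In the parabolic case the comparison function $\psi$ is unavailable and I would instead work through an integral identity. The key preliminary is the vanishing $\lim_{r\to+\infty}v(r)h'(r)=0$: the product $vh'$ is non-positive and non-decreasing, hence admits a limit $-a\le 0$; if $a>0$, the bound $h'(r)\le -a/(2v(r))$ for $r$ large, together with $\int^{+\infty}v^{-1}=+\infty$, would force $h$ to become negative, a contradiction. With this vanishing, integrating $(vh')'=\lambda vh$ first from $r$ to $+\infty$ and then from $R_{0}$ to $+\infty$ yields
\[
1-L \;=\; \lambda\int_{R_{0}}^{+\infty}\frac{1}{v(r)}\Bigl(\int_{r}^{+\infty}v(t)h(t)\,dt\Bigr)\,dr.
\]
Writing $I=\int_{R_{0}}^{+\infty}\bigl(\int_{r}^{+\infty}v(t)\,dt\bigr)/v(r)\,dr$, the bound $h\ge L$ inside gives $L(1+\lambda I)\le 1$, while $h\le 1$ gives $1-L\le\lambda I$. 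The first inequality shows that $I=+\infty$, namely the second condition in \eqref{model2}, forces $L=0$ and hence Feller. If instead $I<+\infty$, choosing $\lambda$ so small that $\lambda I<1$ gives $L\ge 1-\lambda I>0$, so Feller fails for this $\lambda$, and therefore for every $\lambda>0$ by the $\lambda$-independence in Theorem \ref{th_equiv}.

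The main obstacle is establishing $\lim_{r\to+\infty}v(r)h'(r)=0$ in the parabolic case: this vanishing is exactly what separates the parabolic from the non-parabolic regime and unlocks the clean integral identity above; once it is available, the equivalence with \eqref{model2} follows from the elementary two-sided bounds on $L$.
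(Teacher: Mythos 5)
Your proof is correct and follows the same overall skeleton as the paper's: reduction to the radial ODE via Corollary \ref{cor_equiv}, comparison with the bounded harmonic function $\int_r^{+\infty}g^{1-m}$ in the non-parabolic case, and, in the parabolic case, the key vanishing $g^{m-1}h'\to 0$ followed by two integrations of $(g^{m-1}h')'=\lambda g^{m-1}h$. There are, however, three worthwhile deviations. First, you obtain the monotonicity of $h$ directly from the approximants $h_n$ (since $g^{m-1}h_n'$ is non-decreasing and must be $\le 0$ to match $h_n(R_n)=0$), whereas the paper routes this through Lemma \ref{lemma_decreasing2}, whose proof invokes the Ahlfors characterization of parabolicity; your argument is more elementary and works without the parabolicity hypothesis. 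Second, your two-sided bound $\lambda L I\le 1-L\le \lambda I$ derived from the single identity $1-L=\lambda\int_{R_0}^{+\infty}g^{1-m}(r)\bigl(\int_r^{+\infty}g^{m-1}h\bigr)\,dr$ handles both sub-cases of \eqref{model2} at once (when $g^{m-1}\notin L^1$ one has $I=+\infty$ automatically, which is exactly the paper's ``trivially satisfied'' convention), where the paper splits into $g^{m-1}\in L^1$ versus $g^{m-1}\notin L^1$ and integrates a second time only in the former case. Third, for the converse you argue by contraposition — $I<+\infty$ plus a choice of $\lambda$ with $\lambda I<1$ forces $L\ge 1-\lambda I>0$, and then the ``for some, hence any $\lambda$'' clause of Theorem \ref{th_equiv} kills the Feller property — while the paper fixes $\lambda$, divides by $h$ to get $-(\log h)'\le\lambda\,g^{1-m}(r)\int_r^{+\infty}g^{m-1}$, and integrates to extract $I=+\infty$ from $h\to 0$. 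Your version buys uniformity of the $\lambda$ at the price of leaning on the $\lambda$-independence of the elliptic characterization; the paper's buys an argument valid for a single fixed $\lambda$. All steps in your proposal are sound and no gap remains.
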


\begin{remark}
In case $g^{m-1}\notin L^{1}\left(  +\infty\right)  $ condition (\ref{model2})
(ii) has to be understood as trivially satisfied.
\end{remark}

For the sake of completeness, we include a proof of Theorem \ref{th_model}
which is clearly modeled on Azencott arguments but it is somewhat more direct.

\begin{proof}
Assume first the validity of (\ref{model1}). For every $n\in\mathbb{N}$,
consider the function%
\begin{equation}
G_{n}\left(  r\right)  =\int_{r}^{n}\frac{1}{g^{m-1}\left(  t\right)  }dt
\label{model2.1}%
\end{equation}
and note that $u_{n}\left(  r\right)  =G_{n}\left(  r\right)  /G_{n}\left(
1\right)  $ solves the Dirichlet problem%
\begin{equation}
\left\{
\begin{array}
[c]{ll}%
\Delta u_{n}=0 & \text{on }B_{n}\left(  0\right)  \backslash B_{1}\left(
0\right) \\
u_{n}=1 & \text{on }\partial B_{1}\left(  0\right) \\
u_{n}=0 & \text{on }\partial B_{n}\left(  0\right)  .
\end{array}
\right.  \label{model2.2}%
\end{equation}
Let $\lambda>0$ be fixed and let $h_{n}\left(  r\right)  $ be the
(rotationally symmetric) solution of%
\begin{equation}
\left\{
\begin{array}
[c]{ll}%
\Delta h_{n}=\lambda h_{n} & \text{on }B_{n}\left(  0\right)  \backslash
B_{1}\left(  0\right) \\
h=1 & \text{on }\partial B_{1}\left(  0\right) \\
h=0 & \text{on }\partial B_{n}.
\end{array}
\right.  \label{model2.3}%
\end{equation}
By the maximum principle%
\begin{equation}
h_{n}\left(  r\right)  \leq u_{n}\left(  r\right)  \text{, on }B_{n}\left(
0\right)  \backslash B_{1}\left(  0\right)  . \label{model2.4}%
\end{equation}
Letting $n\rightarrow+\infty$ we deduce that the minimal, positive solution
$h\left(  r\right)  $ of%
\begin{equation}
\left\{
\begin{array}
[c]{ll}%
\Delta h=\lambda h & \text{on }M_{g}^{m}\backslash B_{1}\left(  0\right) \\
h=1 & \text{on }\partial B_{1}\left(  0\right)
\end{array}
\right.  \label{model2.5}%
\end{equation}
satisfies%
\begin{equation}
h\left(  r\right)  \leq c\int_{r}^{+\infty}\frac{1}{g^{m-1}\left(  t\right)
}dt, \label{model2.6}%
\end{equation}
with $c=\int_{1}^{+\infty}g^{1-m}\left(  t\right)  dt>0$. It follows that
$h\left(  r\right)  \rightarrow0$ as $r\rightarrow+\infty$ proving that
$M_{g}^{m}$ is Feller.

Suppose now that conditions (\ref{model2}) (i) and (ii)\ are met and, as
above, let $h$ be the minimal positive solution of (\ref{model2.5}).
Explicitly, this means that $h\left(  r\right)  \geq0$ satisfies%
\begin{equation}
\left\{
\begin{array}
[c]{l}%
\left(  g^{m-1}h^{\prime}\right)  ^{\prime}=\lambda g^{m-1}h\text{ on }\left(
1,+\infty\right) \\
h\left(  1\right)  =1\text{.}%
\end{array}
\right.  \label{model2.7}%
\end{equation}
Note that, in particular, $g^{m-1}h^{\prime}$ is increasing. On the other
hand, by Lemma \ref{lemma_decreasing2} of the previous section, $h^{\prime
}\left(  r\right)  <0$ on $(1,+\infty)$ and, therefore,
\begin{equation}
g^{m-1}\left(  r\right)  h^{\prime}\left(  r\right)  \rightarrow b\leq0\text{,
as }r\rightarrow+\infty. \label{model2.8}%
\end{equation}
We claim that, in fact, $b=0$. Indeed, suppose the contrary. Then, having
fixed $\varepsilon>0$ satisfying $b+\varepsilon<0$, we can choose $r_{0}>>1$
such that $-g^{m-1}\left(  r\right)  h^{\prime}\left(  r\right)  \geq-\left(
b+\varepsilon\right)  >0$, on $[r_{0},+\infty)$. Whence, integrating on
$[r_{0},+\infty]$ yields%
\begin{equation}
h\left(  r_{0}\right)  \geq-\lim_{r\rightarrow+\infty}h\left(  r\right)
+h\left(  r_{0}\right)  \geq-\left(  b+\varepsilon\right)  \int_{r_{0}%
}^{+\infty}\frac{1}{g^{m-1}\left(  t\right)  }dt, \label{model2.9}%
\end{equation}
which contradicts (\ref{model2}) (i). This proves the claim. Keeping in mind
this fact, we now integrate (\ref{model2.7}) on $[r,+\infty)$ and we get%
\begin{align}
-g^{m-1}\left(  r\right)  h^{\prime}\left(  r\right)   &  =\lambda\int
_{r}^{+\infty}g^{m-1}\left(  t\right)  h\left(  t\right)  dt \label{model2.10}%
\\
&  \geq\lambda\lim_{t\rightarrow+\infty}h\left(  t\right)  \int_{r}^{+\infty
}g^{m-1}\left(  t\right)  dt.\nonumber
\end{align}
Accordingly, if $g^{m-1}\notin L^{1}\left(  +\infty\right)  $ we necessarily
have $\lim_{t\rightarrow+\infty}h\left(  t\right)  =0$ and $M_{g}^{m}$ is
Feller. On the other hand, if $g^{m-1}\in L^{1}\left(  +\infty\right)  ,$
integrating (\ref{model2.10}) once more we deduce%
\begin{align}
h\left(  1\right)   &  \geq-\lim_{t\rightarrow+\infty}h\left(  t\right)
+h\left(  1\right) \label{model2.11}\\
&  \geq\lambda\lim_{t\rightarrow+\infty}h\left(  t\right)  \int_{1}^{+\infty
}\frac{\int_{r}^{+\infty}g^{m-1}\left(  t\right)  dt}{g^{m-1}\left(  r\right)
}dr.\nonumber
\end{align}
Because of (\ref{model2}) (ii), this latter forces $\lim_{t\rightarrow+\infty
}h\left(  t\right)  =0$ and $M_{g}^{m}$ is again Feller.

Conversely, we now suppose that the model $M_{g}^{m}$ is Feller, we assume
that condition (\ref{model1}) is not satisfied and we prove the validity of
(\ref{model2}) (ii). If $g^{m-1}\notin L^{1}\left(  +\infty\right)  $ then
there is nothing to prove. Otherwise, we note that, as above, $g^{m-1}\left(
r\right)  h^{\prime}\left(  r\right)  \rightarrow0$, as $r\rightarrow+\infty$.
Therefore, according to the first line in (\ref{model2.10}), we have%
\begin{align*}
-g^{m-1}\left(  r\right)  h^{\prime}\left(  r\right)   &  =\lambda\int
_{r}^{+\infty}g^{m-1}\left(  t\right)  h\left(  t\right)  dt\\
&  \leq\lambda h\left(  r\right)  \int_{r}^{+\infty}g^{m-1}\left(  t\right)
dt.
\end{align*}
and integrating this latter on $[1,+\infty]$ finally gives%
\[
+\infty=-\lim_{r\rightarrow+\infty}\log h\left(  r\right)  \leq\lambda\int
_{1}^{+\infty}\frac{\int_{r}^{+\infty}g^{m-1}\left(  t\right)  dt}%
{g^{m-1}\left(  r\right)  }dr,
\]
as desired.
\end{proof}

Recall that, on $M_{g}^{m}$,%
\begin{equation}
\mathrm{vol}\left(  \partial B_{r}\right)  =c_{m}g^{m-1}\left(  r\right)
\label{model3}%
\end{equation}
where $c_{m}$ is the volume of the Euclidean unit sphere $\mathbb{S}^{m-1}$. In
particular, by the co-area formula,%
\begin{equation}
\mathrm{vol}\left(  M_{g}^{m}\right)  -\mathrm{vol}\left(  B_{r}\right)
=c_{m}\int_{r}^{+\infty}g^{m-1}\left(  t\right)  dt. \label{model4}%
\end{equation}
Therefore Theorem \ref{th_model} can be restated more geometrically by saying
that $M_{g}^{m}$ is Feller if either%
\begin{equation}
\frac{1}{\mathrm{vol}\left(  \partial B_{r}\right)  }\in L^{1}\left(
+\infty\right)  \label{model5}%
\end{equation}
or%
\begin{equation}%
\begin{array}
[c]{ll}%
\text{(a)} & \dfrac{1}{\mathrm{vol}\left(  \partial B_{r}\right)  }\notin
L^{1}\left(  +\infty\right)  \text{ and}\\
\text{(b)} & \dfrac{\mathrm{vol}\left(  M_{g}\right)  }{\mathrm{vol}\left(
\partial B_{r}\right)  }-\dfrac{\mathrm{vol}\left(  B_{r}\right)
}{\mathrm{vol}\left(  \partial B_{r}\right)  }\notin L^{1}\left(
+\infty\right)  .
\end{array}
\label{model6}%
\end{equation}

From these considerations we deduce, in particular, the validity of the next

\begin{corollary}
\label{cor_model}Every model manifold $M_{g}^{m}$ with infinite volume has the
Feller property.
\end{corollary}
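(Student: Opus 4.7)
The plan is to apply Theorem \ref{th_model}, considering the two cases in its statement according to whether $1/g^{m-1}$ is integrable at infinity or not. The volume hypothesis, via the coarea formula \eqref{model4}, translates into $g^{m-1}\notin L^{1}(+\infty)$, and this is precisely what trivializes the second condition.

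First I would record the translation
\[
\mathrm{vol}(M_{g}^{m})=+\infty \iff \int^{+\infty}g^{m-1}(t)\,dt=+\infty,
\]
which is immediate from \eqref{model4}. Now I split into two cases. If $1/g^{m-1}\in L^{1}(+\infty)$, then condition \eqref{model1} of Theorem \ref{th_model} is satisfied and we are done. Otherwise $1/g^{m-1}\notin L^{1}(+\infty)$, which is exactly hypothesis (i) in \eqref{model2}; and since the infinite volume hypothesis gives $\int_{r}^{+\infty}g^{m-1}(t)\,dt=+\infty$ for every $r$, condition (ii) in \eqref{model2} is trivially satisfied in the sense of the remark immediately following Theorem \ref{th_model}. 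Hence $M_{g}^{m}$ is Feller in this case as well.

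There is no real obstacle here: the whole content lies in having Theorem \ref{th_model} available together with the geometric interpretation \eqref{model4} of $\int_{r}^{+\infty}g^{m-1}\,dt$ as the volume of the complement of the ball. The only point deserving attention is the convention about condition \eqref{model2}(ii) when $g^{m-1}\notin L^{1}(+\infty)$, and this has already been made explicit in the remark preceding the corollary.
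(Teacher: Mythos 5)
Your argument is correct and is essentially the paper's own: the corollary is obtained by combining Theorem \ref{th_model} with the volume identity \eqref{model4}, noting that infinite volume forces $g^{m-1}\notin L^{1}(+\infty)$ so that \eqref{model2}(ii) is trivially satisfied, and then splitting on whether \eqref{model1} holds. Your handling of the convention for \eqref{model2}(ii) matches the remark following Theorem \ref{th_model}, so there is nothing to add.
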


\begin{remark}
\label{rem_model}We have already recalled that a necessary and sufficient
condition for $M_{g}$ to be non-parabolic is that $g^{1-m}\in L^{1}\left(
+\infty\right)  .$ In fact, if $o$ denotes the pole of $M_{g},$ then the
function
\[
G\left(  x,o\right)  :=\int_{r\left(  x\right)  }^{+\infty}\frac{dt}%
{g^{m-1}\left(  t\right)  }%
\]
is the Green kernel with pole $o$ of the Laplace-Beltrami operator of $M_{g}$.
In passing, note also that $G\left(  x,o\right)  \rightarrow0$ as
$x\rightarrow\infty$. According to Theorem \ref{th_model}, a non-parabolic
model $M_{g}$ is Feller. Since parabolicity implies stochastic completeness,
we immediately deduce that a stochastically incomplete model is Feller. On the
other hand, neither parabolicity, nor, a fortiori, stochastic completeness
imply the Feller property. This is shown in the next example.
\end{remark}

\begin{example}
\label{ex_versus1}Having fixed $\beta>2$ and $\alpha>0$, let $g\left(
t\right)  :\mathbb{R}\rightarrow\mathbb{R}$ be any smooth, positive, odd
function satisfying $g^{\prime}\left(  0\right)  =1$ and $g\left(  r\right)
=\exp\left(  -\alpha r^{\beta}\right)  $ for $r\geq10$. Then,%
\begin{equation}
\frac{1}{g\left(  r\right)  }=\exp\left(  \alpha r^{\beta}\right)  \notin
L^{1}\left(  +\infty\right)  . \label{versus1}%
\end{equation}
Moreover,%
\begin{equation}
\frac{\int_{r}^{+\infty}g\left(  t\right)  dt}{g\left(  r\right)  }\asymp
r^{1-\beta}\in L^{1}\left(  +\infty\right)  . \label{versus2}%
\end{equation}
With this preparation, consider the $2$-dimensional model $M^2_{g}$. As observed
above, by (\ref{versus1}) $M^2_{g}$ is parabolic. On
the other hand, according to Theorem \ref{th_model}, condition (\ref{versus2})
implies that $M^2_{g}$ is not Feller.
\end{example}

\section{Monotonicity properties and non-uniqueness of
bounded solutions of the exterior problem\label{section monotonicity}}

In Theorem~\ref{th_model} we were able to
characterize the validity of the Feller property on a model manifold in terms
of minimal solutions $h$ of the o.d.e. problem (\ref{equiv3}), namely%
\begin{equation}
\left\{
\begin{array}
[c]{l}%
h^{\prime\prime}+\left(  m-1\right)  \dfrac{g^{\prime}}{g}h^{\prime}=\lambda
h\text{, on }[R_{0},+\infty)\\
h\left(  R_{0}\right)  =1\\
h\left(  r\right)  >0\text{ on }[R_{0},+\infty),
\end{array}
\right.  \tag{\ref{equiv3}}%
\end{equation}
It should be noted that $h$ enjoys interesting monotonicity properties. First
of all, we point out the following

\begin{lemma}
\label{lemma_comp}Let $R_{0},\lambda>0,$ $m\in\mathbb{N}$ and let
$g:[R_{0},+\infty)\rightarrow\left(  0,+\infty\right)  $ be a smooth function.
Assume that $u:[R_{0},+\infty)\rightarrow\lbrack0,+\infty)$ is a non-negative
solution of the inequality%
\begin{equation}
u^{\prime\prime}+\left(  m-1\right)  \frac{g^{\prime}}{g}u^{\prime}\geq\lambda
u. \label{comp4}%
\end{equation}
Suppose $u^{\prime}\left(  R_{1}\right)  \geq0$ for some $R_{1}\geq R_{0}$.
Then $u^{\prime}\left(  r\right)  \geq0$ for every $r\geq R_{1}$.
\end{lemma}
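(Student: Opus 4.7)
The plan is to recast the inequality in divergence form and exploit the non-negativity of $u$ to deduce monotonicity of $g^{m-1}u'$.

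First I would observe that since $g>0$ on $[R_0,+\infty)$, multiplying the differential inequality (\ref{comp4}) by $g^{m-1}$ yields
\[
(g^{m-1} u')' = g^{m-1}\bigl(u'' + (m-1)\tfrac{g'}{g}u'\bigr) \geq \lambda g^{m-1} u.
\]
Using the standing assumption $u \geq 0$, the right-hand side is non-negative, so $g^{m-1}u'$ is non-decreasing on $[R_0,+\infty)$.

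Next I would use the hypothesis at $R_1$: since $u'(R_1)\geq 0$ and $g^{m-1}(R_1)>0$, we have $g^{m-1}(R_1)u'(R_1)\geq 0$. By the monotonicity established in the previous step, for every $r\geq R_1$,
\[
g^{m-1}(r)\, u'(r) \;\geq\; g^{m-1}(R_1)\, u'(R_1) \;\geq\; 0.
\]
Dividing by the strictly positive quantity $g^{m-1}(r)$ yields $u'(r)\geq 0$ for all $r\geq R_1$, which is the conclusion.

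There is essentially no obstacle here; the only subtlety is recognizing that the natural integrating factor is $g^{m-1}$, which turns the second-order inequality into the statement that a single first-order quantity is monotone. This is the same device used implicitly in (\ref{model2.7}) to write the equation for the minimal solution $h$ in the form $(g^{m-1}h')' = \lambda g^{m-1} h$.
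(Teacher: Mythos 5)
Your proof is correct and follows essentially the same route as the paper: the paper also rewrites (\ref{comp4}) as $\left(g^{m-1}u'\right)'/g^{m-1}\geq\lambda u$ and integrates on $[R_1,r]$, which is exactly your monotonicity-of-$g^{m-1}u'$ argument. No gaps.
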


\begin{proof}
Write inequality (\ref{comp4}) in the form%
\begin{equation}
\frac{\left(  g^{m-1}u^{\prime}\right)  ^{\prime}}{g^{m-1}}\geq\lambda u.
\label{comp5}%
\end{equation}
Therefore, integrating on $[R_{1},r]$ gives%
\begin{equation}
u^{\prime}\left(  r\right)  \geq\frac{\lambda\int_{R_{1}}^{r}u\left(
t\right)  g^{m-1}\left(  t\right)  dt+g^{m-1}\left(  R_{1}\right)  u^{\prime
}\left(  R_{1}\right)  }{g^{m-1}\left(  r\right)  }\geq0, \label{comp6}%
\end{equation}
as claimed.
\end{proof}

Actually, much more can be said if we impose some further condition on the
coefficient $g$. Namely, we have the following

\begin{lemma}
\label{lemma_decreasing2}Assume that $1/g^{m-1}\notin L^{1}\left(
+\infty\right)  .$ Let $h$ be the minimal (bounded
is enough) solution of (\ref{equiv3}). Then
$h\left(  r\right)  $ is a strictly decreasing function.
\end{lemma}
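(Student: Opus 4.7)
The plan is to rewrite the ODE in divergence form
\[
(g^{m-1} h')' = \lambda\, g^{m-1}\, h
\]
and extract monotonicity information for the auxiliary quantity $g^{m-1}h'$. Because $h>0$, $\lambda>0$, and $g>0$, the right-hand side is strictly positive; hence $g^{m-1}h'$ is a \emph{strictly} increasing function of $r$ on $[R_0,+\infty)$.

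I would then argue by contradiction. Suppose there exists $R_1\geq R_0$ with $h'(R_1)\geq 0$. The previous lemma (Lemma~\ref{lemma_comp}) applies and gives $h'(r)\geq 0$ for all $r\geq R_1$, so $h$ is non-decreasing past $R_1$. To promote this to something quantitative I want a point where $g^{m-1}h'$ is \emph{strictly} positive: this is exactly where strict monotonicity of $g^{m-1}h'$ is used. If $h'(R_1)=0$, then because $(g^{m-1}h')'(R_1)=\lambda g^{m-1}(R_1)h(R_1)>0$, one can pick some $R_2>R_1$ with $g^{m-1}(R_2)h'(R_2)=:c>0$; if $h'(R_1)>0$ already, set $R_2=R_1$ and $c=g^{m-1}(R_1)h'(R_1)>0$.

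Since $g^{m-1}h'$ is increasing, $g^{m-1}(r)h'(r)\geq c$ for every $r\geq R_2$, so
\[
h'(r)\geq \frac{c}{g^{m-1}(r)}\quad\text{on }[R_2,+\infty).
\]
Integrating from $R_2$ to $r$ gives
\[
h(r)\geq h(R_2)+c\int_{R_2}^{r}\frac{dt}{g^{m-1}(t)}.
\]
The assumption $1/g^{m-1}\notin L^{1}(+\infty)$ forces the right-hand side to diverge as $r\to +\infty$, which contradicts the boundedness of $h$. Hence no such $R_1$ exists, i.e.\ $h'(r)<0$ for every $r\in[R_0,+\infty)$, proving that $h$ is strictly decreasing.

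The only slightly delicate point is the case $h'(R_1)=0$: one cannot directly rescale by $h'(R_1)$, so it is essential to observe that $g^{m-1}h'$ is strictly (not merely weakly) monotone, which is automatic here because $(g^{m-1}h')'=\lambda g^{m-1}h>0$ identically. Once this is noticed, the divergence of $\int 1/g^{m-1}$ does all the work, and the rest is a one-line double integration of the ODE.
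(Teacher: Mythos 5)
Your proof is correct, but it takes a genuinely different route from the paper's. The paper deliberately proves ``a more general version'' of this lemma via potential theory: it invokes Ahlfors' maximum-principle characterization of parabolicity to show (Lemma~\ref{lemma_decreasing1}) that on \emph{any} complete parabolic manifold the function $r\mapsto\sup_{\partial B_r}h$ is weakly decreasing, uses the equivalence of $1/g^{m-1}\notin L^1(+\infty)$ with parabolicity of the model to get $h'\leq 0$, and then upgrades to strict decrease by combining with Lemma~\ref{lemma_comp}: if $h'(R_1)\geq 0$ somewhere, then $h'\geq 0$ thereafter, forcing $h$ to be constant past $R_1$, which is impossible for a positive solution of $\Delta h=\lambda h$ with $\lambda>0$. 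Your argument instead stays entirely inside the ODE: from $(g^{m-1}h')'=\lambda g^{m-1}h>0$ you get strict monotonicity of $g^{m-1}h'$, locate a point where this quantity equals some $c>0$ (correctly handling the degenerate case $h'(R_1)=0$), and integrate $h'\geq c/g^{m-1}$ against the divergence of $\int 1/g^{m-1}$ to contradict boundedness --- in fact your divergence-form step makes the appeal to Lemma~\ref{lemma_comp} redundant. Your route is more elementary and self-contained, using only positivity and boundedness of $h$; the paper's route buys a monotonicity statement valid on arbitrary complete parabolic manifolds rather than just models, which is why the authors chose it. It is worth noting that your integration technique is essentially the one the paper itself deploys later, inside the proof of Theorem~\ref{th_model}, to show $g^{m-1}h'\to 0$.
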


We are going to prove (a more general version of) this result by using the
point of view of potential theory on model manifolds.
We need to recall the following
characterization of parabolicity due to L.V. Ahlfors (see \cite{AhSa-book} Theorem 6.C. See
also \cite{PST-arxiv} Theorem 4).

\begin{theorem}
A Riemannian manifold $\left(  M,\left\langle ,\right\rangle \right)  $ of
dimension $m\geq2$ is parabolic if and only if given a open set $G\subset M$ and
a bounded above solution $f$ of $\Delta f\geq0$ on $G$ it holds%
\begin{equation}
\sup_{G}f\leq\sup_{\partial G}f. \label{equiv8}%
\end{equation}

\end{theorem}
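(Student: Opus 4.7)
The plan is to prove the equivalence directly from the Liouville-type definition of parabolicity recalled in the introduction: every bounded above subharmonic function on $M$ is constant. I would argue the two directions separately, tacitly restricting to open sets $G$ with $\partial G\neq\emptyset$ (otherwise the Ahlfors inequality $\sup_G f\leq\sup_{\partial G}f=-\infty$ is vacuously false for, e.g., non-zero constants).

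For the implication (\ref{equiv8}) $\Rightarrow$ parabolicity, I would take $u\in C^2(M)$ with $\Delta u\geq 0$ and $L:=\sup_M u<+\infty$. If the supremum is attained at an interior point, the strong maximum principle immediately yields $u\equiv L$. Otherwise $u<L$ pointwise; then for any smooth relatively compact ball $B$, compactness gives $\sup_{\overline B}u<L$, and applying (\ref{equiv8}) with $G=M\setminus\overline B$ (so $\partial G=\partial B$) produces
\[
\sup_{M\setminus\overline B}u\leq\sup_{\partial B}u\leq\sup_{\overline B}u<L,
\]
so $\sup_M u<L$, a contradiction. Hence $u$ is forced to be constant, which is exactly parabolicity.

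For the reverse implication, I would assume $M$ is parabolic and argue by contradiction: suppose there exist a proper open set $G\subsetneq M$ and a function $f$, continuous on $\overline G$ and $C^2$-subharmonic and bounded above on $G$, with $b:=\sup_G f>\sup_{\partial G}f=:a$. The idea is to extend $f$ to a nonconstant bounded subharmonic function on all of $M$ by ``clipping from below'' at the boundary-sup value:
\[
\widetilde f(x)=\begin{cases}\max\{f(x),a\}&x\in\overline G,\\ a&x\in M\setminus G.\end{cases}
\]
This $\widetilde f$ is continuous on $M$ (the two definitions agree on $\partial G$ because $f\leq a$ there), bounded above by $b$, and non-constant since $\widetilde f=b$ in $G$ while $\widetilde f\equiv a<b$ outside. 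Once I establish that $\widetilde f$ is subharmonic on $M$, parabolicity forces $\widetilde f$ to be constant, contradiction.

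The main obstacle is precisely this gluing step, because $\widetilde f$ is only continuous, not $C^2$, across $\partial G$. I would handle it via the sub-mean-value characterization of subharmonicity: on $G$, $\widetilde f$ is the maximum of two classical subharmonic functions; on $\mathrm{int}(M\setminus G)$ it is a constant; and at any point $x_0\in\partial G$ the sub-mean-value inequality $\widetilde f(x_0)=a\leq \mathrm{avg}_{B_r(x_0)}\widetilde f$ is automatic, since $\widetilde f\geq a$ everywhere on $M$. If one insists on applying the Liouville property only to classically smooth subharmonic functions, the standard fix is to mollify $\widetilde f$ with a non-negative symmetric mollifier, producing a family of $C^\infty$ bounded subharmonic approximations $\widetilde f_\varepsilon$ that retain non-constancy for small $\varepsilon$, and then apply parabolicity directly to each $\widetilde f_\varepsilon$. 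Apart from this gluing/regularization step, the argument reduces to the strong maximum principle and the definitions.
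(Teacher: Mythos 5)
The paper does not actually prove this statement: it is recalled as a classical characterization of parabolicity due to Ahlfors, with a pointer to \cite{AhSa-book} (Theorem 6.C) and to \cite{PST-arxiv}, so there is no in-paper argument to compare yours against. Your proof is the standard potential-theoretic one and its overall structure is sound. The direction ``Ahlfors inequality implies parabolicity'' is clean and complete. In the converse direction, the key gluing step --- that $\widetilde f=\max\{f,a\}$ on $G$, extended by $a$ off $G$, is subharmonic on all of $M$ --- is indeed the right idea and is a classical lemma, but your justification of subharmonicity across $\partial G$ needs care in the Riemannian setting: the exact sub-mean-value inequality over geodesic balls does not characterize subharmonicity outside the Euclidean case (harmonic functions on a general manifold do not satisfy the mean value property), so you should instead verify $\Delta\widetilde f\geq 0$ in the distributional sense or use the comparison-with-harmonic-functions definition of subharmonicity. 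Likewise, global convolution mollifiers are not available on a manifold, so the smoothing step should be replaced by invoking the standard fact that the Liouville property defining parabolicity holds for upper semicontinuous (or $W^{1,2}_{\mathrm{loc}}$) subharmonic functions, not only $C^{2}$ ones. With these routine substitutions the argument is correct; your caveat that one must take $\partial G\neq\emptyset$ is also appropriate, since the stated inequality is vacuously false for $G=M$.
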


In particular, if we assume that $G=M\backslash\Omega$ for some $\Omega
\subset\subset M$, it turns out that the function%
\begin{equation}
r\longmapsto\sup_{\partial B_{r}}f\text{,} \quad \forall r>>1 , \label{equiv9}%
\end{equation}
is decreasing. Since the minimal solution $h$ of the problem%
\begin{equation}
\left\{
\begin{array}
[c]{l}%
\Delta h=\lambda h\text{ on }M\backslash\bar{\Omega}\\
h=1\text{ on }\partial\Omega\\
h>0\text{ on }M\backslash\Omega,
\end{array}
\right.  \tag{\ref{equiv1}}%
\end{equation}
must satisfy $0<h\leq1,$ we obtain the following conclusion.

\begin{lemma}
\label{lemma_decreasing1}Let $\left(  M,\left\langle ,\right\rangle \right)  $
be a complete, parabolic manifold and $\Omega\subset\subset M$. Let
$h:M\backslash\Omega\rightarrow\mathbb{R}$ be the minimal solution of problem
(\ref{equiv1}). Then, $\sup_{\partial B_{r}}h$ is a decreasing function of
$r>>1$.
\end{lemma}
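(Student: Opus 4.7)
The plan is to apply Ahlfors' characterization of parabolicity (the theorem stated just above the lemma) directly to the minimal solution $h$ on successively larger exterior domains. The point is that $h$ falls exactly into the hypothesis class of that theorem: bounded above and subharmonic on an open subset of $M$.

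The first step would be to record two basic properties of $h$. By Theorem~\ref{th_ext1} we have $0<h\leq 1$, so $h$ is bounded above. Moreover, since $\lambda>0$ and $h>0$ on $M\setminus\Omega$, the equation $\Delta h=\lambda h$ forces $\Delta h\geq 0$, so $h$ is in fact (strictly) subharmonic on the open set $M\setminus\bar\Omega$.

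The second step is to fix $r_{0}\gg 1$ large enough that $\bar\Omega\subset B_{r_{0}}$ and, for each $r\geq r_{0}$, to apply Ahlfors' theorem to the open exterior set $G_{r}=M\setminus\bar B_{r}$. Since $h$ is bounded above and satisfies $\Delta h\geq 0$ on $G_{r}$, parabolicity of $M$ gives
\[
\sup_{G_{r}}h\leq\sup_{\partial G_{r}}h=\sup_{\partial B_{r}}h.
\]
The claimed monotonicity is then immediate by comparing two radii $r_{0}\leq r_{1}<r_{2}$: the sphere $\partial B_{r_{2}}$ sits inside $G_{r_{1}}$, hence
\[
\sup_{\partial B_{r_{2}}}h\leq\sup_{G_{r_{1}}}h\leq\sup_{\partial B_{r_{1}}}h.
\]

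I do not foresee a real obstacle; the argument is essentially a one-line consequence of Ahlfors' characterization once the two properties of $h$ are in place. The only mild point worth checking is that completeness of $M$ ensures the geodesic balls $B_{r}$ are relatively compact with smooth complements to which Ahlfors' statement genuinely applies, which is where the completeness hypothesis in the lemma is actually used.
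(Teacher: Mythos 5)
Your proof is correct and follows exactly the paper's own route: observe that $h$ is bounded above and satisfies $\Delta h=\lambda h\geq 0$ on $M\setminus\bar\Omega$, then apply Ahlfors' characterization of parabolicity to the exterior sets $M\setminus\bar B_{r}$ to obtain the monotonicity of $r\mapsto\sup_{\partial B_{r}}h$. No discrepancies with the paper's argument.
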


Since for the model manifold $M_{g}^{m}$ the condition $g^{1-m}\notin L^{1}\left(
+\infty\right)  $ is equivalent to parabolicity and since the minimal solution
$h$ of (\ref{equiv3}) \ is nothing but the the minimal solution of
(\ref{equiv1}) on $M_{g}^{m}$, the (weak) monotonicity property asserted in
Lemma \ref{lemma_decreasing2} immediately follows from Lemma
\ref{lemma_decreasing1}. In order to conclude that, in fact, $h$ is strictly
decreasing, suppose by contradiction that $h^{\prime}\left(  R_{1}\right)
\geq0$ for some $R_{1}\geq R_{0}$. Then, by Lemma \ref{lemma_comp},
$h^{\prime}\left(  r\right)  \geq0$ for every $r\geq R_{1}$. On the other
hand, we have just proved that $h^{\prime}\leq0$. Therefore $h\left(
r\right)  \equiv h\left(  R_{1}\right)  $ for every $r\geq R_{1}$ which is
clearly impossible.

\par
We conclude this section showing that even on Feller manifolds, there could exist
infinitely many positive solutions which are asymptotically non-zero.

\begin{example}
\label{ex_uniqueness}Let $M_{g}^{m}$ be an $m$-dimensional non-stochastically complete model manifold.
According to Remark~\ref{rem_model} $M_{g}^{m}$ is Feller. By an equivalent characterization of stochastic completeness (see, e.g., \cite{Gr-bams}, Theorem 6.2) there exists a positive bounded function $u$ satisfying
$\Delta u= \lambda u$. By a radialization argument if necessary,  we may assume that $u$ is radial, and by scaling, we may also suppose that, given $R_0>0$, we have $u(R_0)=1$, so that $u$ solves the problem
\begin{equation}
\left\{
\begin{array}
[c]{l}%
u^{\prime\prime}+\left(  m-1\right)  \dfrac{g^{\prime}}{g}u ^{\prime}=\lambda u
\text{, on }(R_{0}\text{,}+\infty)\\
u\left(  R_{0}\right)  =1.
\end{array}
\right.  \label{m2}%
\end{equation}
Note that by the maximum principle, the subharmonic function $u$ cannot tend to zero at infinity.
Next, let $h\left(  r\left(  x\right)  \right)  $ be
the (rotationally invariant) minimal, positive solution of (\ref{m2}). Since $M_{g}^{m}$ is Feller,
$h(r)\to 0$ as $r\to\infty$, and in particular $h\neq u$.
For any fixed $\alpha$ such that $h^{\prime}\left(  R_{0}\right)<\alpha<u'\left(R_0\right) $, let $v_{\alpha}\left(
t\right)  $ be the solution of the Cauchy problem%
\begin{equation}
\left\{
\begin{array}
[c]{l}%
v_{\alpha}^{\prime\prime}+\left(  m-1\right)  \dfrac{g^{\prime}}{g}v_{\alpha
}^{\prime}=\lambda v_{\alpha}\text{, on }(R_{0}\text{,}+\infty)\\
v_{\alpha}\left(  R_{0}\right)  =1\text{, }v_{\alpha}^{\prime}\left(
R_{0}\right)  =\alpha.
\end{array}
\right.  \label{m2bis}%
\end{equation}
Since $v_{\alpha}-h$ and $u-v_\alpha$ are solutions of the Cauchy problem%
\[
\left\{
\begin{array}
[c]{l}%
w^{\prime\prime}+\left(  m-1\right)  \dfrac{g^{\prime}}{g}w^{\prime}=\lambda
w\text{, on }(R_{0}\text{,}+\infty)\\
w\left(  R_{0}\right)  =0\text{, }w^{\prime}\left(  R_0\right)  >0,
\end{array}
\right.
\]
according to Lemma \ref{lemma_comp}, they are both non-constant, increasing, hence positive, functions on $(R_{0},+\infty)$.
This means that $h< v_{\alpha}<u$ on $(R_{0}\text{,}+\infty)$. Moreover, since by assumption $h\left(
t\right)  \rightarrow0$, as $t\rightarrow+\infty$, then, necessarily,
$v_{\alpha}\left(  t\right)  \not \rightarrow 0$. It follows that, for every
such $\alpha$, the radial function $v_{\alpha
}\left(  r\left(  x\right)  \right)  $ is a
bounded positive solution of (\ref{equiv1})  which does not tend to zero at infinity.
\end{example}

\section{Comparison with model manifolds\label{section comparison}}

It is by now standard that parabolicity and stochastic completeness of a
general manifold can be deduced from those of a model manifold via curvature
comparisons. Such a result was obtained by Grigor'yan, \cite{Gr-bams}. In view
of Section \ref{section models} we can now extend the use of this comparison
technique to cover also the Feller property.

We begin with two comparison results for solutions of the exterior Dirichlet problem
which, in some sense, can be considered as \textquotedblleft
Khas'minskii-type tests \textquotedblright\ for the Feller property.
By comparison, recall that the original Khas'minskii test
for parabolicity and stochastic completeness states that $M$ is parabolic
(resp. stochastically complete) if, for some $\Omega\subset\subset M$, there
exists a superharmonic function $u>0$ on $M\backslash\Omega$ (resp. a
$\lambda$-superharmonic function $u>0$ on $M\backslash\Omega$) such that
$u\left(  x\right)  \rightarrow+\infty$ \ as $x\rightarrow\infty$. For a proof
based on maximum principle techniques, we refer the reader to
\cite{PRS-memoirs}, \cite{PRS-revista}.

Recall that, by a supersolution of the exterior
problem%
\begin{equation}
\left\{
\begin{array}
[c]{ll}%
\Delta v=\lambda v & \text{on }M\backslash\Omega\\
v=1 & \text{on }\partial\Omega,
\end{array}
\right.  \label{khas1}%
\end{equation}
we mean a function $u$ satisfying%
\[
\left\{
\begin{array}
[c]{ll}%
\Delta u\leq\lambda u & \text{on }M\backslash\Omega\\
u\geq1 & \text{on }\partial\Omega.
\end{array}
\right.
\]
A subsolution is defined similarly by reversing all the inequalities.

\begin{proposition}
\label{prop_comp}Let $\Omega$ be relatively compact open set with smooth boundary
in the Riemannian manifold $\left(  M,\left\langle ,\right\rangle \right)  $ and let
$\lambda>0$. Let $u$ be a positive supersolution of (\ref{khas1}) and let $h$
be the minimal, positive solution of (\ref{khas1}). Then%
\[
h\leq u\text{, on }M\backslash\Omega.
\]
In particular, if $u\left(  x\right)  \rightarrow0$ as $x\rightarrow\infty$
then $M$ is Feller.
\end{proposition}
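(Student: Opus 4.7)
The plan is to exploit the construction of the minimal solution $h$ as the increasing limit of Dirichlet solutions $h_n$ on $\Omega_n \setminus \bar\Omega$ provided by Theorem~\ref{th_ext1}, and to compare each $h_n$ with the given supersolution $u$ via the maximum principle on the bounded annulus $\Omega_n \setminus \bar\Omega$.

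More precisely, I would first fix a smooth relatively compact exhaustion $\Omega_n \nearrow M$ with $\Omega \subset\subset \Omega_1$, and let $h_n$ be the solution of problem (\ref{ext3}) with $q \equiv \lambda$, so that $h_n \nearrow h$ locally uniformly. Setting $w_n = h_n - u$ on $\Omega_n \setminus \bar\Omega$, a direct computation using $\Delta h_n = \lambda h_n$ and $\Delta u \leq \lambda u$ gives
\[
\Delta w_n = \lambda h_n - \Delta u \geq \lambda(h_n - u) = \lambda w_n,
\]
so $w_n$ is $\lambda$-subharmonic on the bounded annular region. On the boundary we have $w_n = 1 - u \leq 0$ on $\partial \Omega$ (since $u \geq 1$ there) and $w_n = -u < 0$ on $\partial \Omega_n$ (since $u > 0$).

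The key step is then the maximum principle for $\lambda$-subharmonic functions with $\lambda > 0$: if $w_n$ attained a positive interior maximum at some point $x_0$, then $\Delta w_n(x_0) \leq 0$ would contradict $\Delta w_n(x_0) \geq \lambda w_n(x_0) > 0$. Hence $w_n \leq 0$ on $\Omega_n \setminus \bar\Omega$, i.e., $h_n \leq u$. Letting $n \to +\infty$ yields $h \leq u$ on $M \setminus \bar\Omega$ (and continuity gives $h \leq u$ on $M \setminus \Omega$).

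For the final assertion, if in addition $u(x) \to 0$ as $x \to \infty$, then the sandwich $0 < h \leq u$ forces $h(x) \to 0$ at infinity, and the Feller property follows at once from the equivalence in Theorem~\ref{th_equiv}. There is no real obstacle here beyond checking the sign conditions in the maximum principle argument; the only slightly delicate point is ensuring that the comparison is valid on each $\Omega_n \setminus \bar\Omega$ before passing to the limit, since the supersolution $u$ need not attain a minimum on $\partial \Omega_n$, but positivity of $u$ combined with $h_n = 0$ on $\partial \Omega_n$ is exactly what makes the boundary comparison trivial there.
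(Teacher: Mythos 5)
Your proposal is correct and follows essentially the same route as the paper: compare each Dirichlet approximant $h_n$ with the supersolution $u$ on the compact annulus $\Omega_n\setminus\overline\Omega$ via the maximum principle, pass to the limit to get $h\leq u$, and conclude via Theorem~\ref{th_equiv}. The only difference is that you spell out the maximum-principle step (through the $\lambda$-subharmonic function $w_n=h_n-u$) which the paper leaves implicit.
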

\begin{proof}
Let $\left\{  \Omega_{n}\right\}  $ be a smooth exhaustion of $M$ and let $\left\{  h_{n}\right\}  $
be the corresponding sequence of functions defined in Theorem \ref{th_ext1}, with
$q\left(  x\right)  =\lambda$. Thus $h_{n}\rightarrow h$ the minimal positive
solution of (\ref{equiv1}) i.e.%
\[
\left\{
\begin{array}
[c]{l}%
\Delta h=\lambda h\text{ on }M\backslash\bar{\Omega}\\
h=1\text{ on }\partial\Omega\\
h>0\text{ on }M\backslash\Omega.
\end{array}
\right.
\]
Since $h_{n}\leq u$ on $\partial\Omega_{n}\cup\partial\Omega$, by the maximum
principle we have%
\[
h_{n}\leq u\text{ on }\Omega_{n}\backslash\Omega,
\]
and, letting $n\rightarrow+\infty,$ we deduce%
\[
h\leq u\text{, on }M\backslash\Omega.
\]
\end{proof}

As an application we get the following result which was first observed in
\cite{Az-bsmf}.

\begin{corollary}
\label{prop_parab}Let $M$ be non-parabolic with positive Green kernel
$G\left(  x,y\right)  $. Suppose that, for some (hence any) $y\in M$,
$G\left(  x,y\right)  \rightarrow0$ as $x\rightarrow\infty$. Then $M$ is Feller.
\end{corollary}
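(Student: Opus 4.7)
The plan is to apply Proposition~\ref{prop_comp}, by using (a suitable rescaling of) the Green kernel $G(\cdot,y)$ as an explicit positive supersolution of the exterior problem that vanishes at infinity.

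First, fix a point $y\in M$ and choose a relatively compact open set $\Omega\subset\subset M$ with smooth boundary such that $y\in\Omega$. By definition $G(\cdot,y)$ is the minimal positive Green kernel for $\Delta$ with pole at $y$, hence it is smooth and harmonic on $M\setminus\{y\}$; in particular it is strictly positive on $\partial\Omega$ and on $M\setminus\overline{\Omega}$. Since $\partial\Omega$ is compact, the constant
\[
c=\min_{\partial\Omega}G(\cdot,y)>0
\]
is well-defined and strictly positive. Set $u(x)=c^{-1}G(x,y)$.

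Fix any $\lambda>0$. Then on $M\setminus\overline{\Omega}$ we have $\Delta u=0\leq\lambda u$, since $u>0$ there, while $u\geq 1$ on $\partial\Omega$ by the choice of $c$. Thus $u$ is a positive supersolution of
\[
\left\{
\begin{array}{ll}
\Delta v=\lambda v & \text{on }M\setminus\Omega,\\
v=1 & \text{on }\partial\Omega,
\end{array}
\right.
\]
in the sense of Proposition~\ref{prop_comp}. Applying that proposition, the minimal positive solution $h$ of the corresponding exterior problem satisfies
\[
0<h(x)\leq u(x)=c^{-1}G(x,y)\quad\text{on }M\setminus\Omega.
\]
By hypothesis $G(x,y)\to 0$ as $x\to\infty$, so $h(x)\to 0$ as $x\to\infty$ as well. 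Invoking Theorem~\ref{th_equiv} we conclude that $M$ is Feller.

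There is essentially no obstacle here: once one recognises that, for positive $\lambda$, any positive harmonic function on the exterior is automatically a $\lambda$-supersolution, the content of Proposition~\ref{prop_comp} does the work. The only mild point to check is that $G(\cdot,y)$ is bounded below by a positive constant on $\partial\Omega$ (so that normalisation to match the boundary data is possible), which follows from positivity and smoothness of $G(\cdot,y)$ away from the pole combined with compactness of $\partial\Omega$.
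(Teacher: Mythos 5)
Your proof is correct and is exactly the argument the paper intends: the corollary is stated as an immediate application of Proposition~\ref{prop_comp}, obtained by normalising the harmonic function $G(\cdot,y)$ so that it is at least $1$ on $\partial\Omega$ and observing that any positive harmonic function is a $\lambda$-supersolution. The paper omits the details, and your write-up supplies precisely the ones it leaves implicit.
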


Recall that the Green kernel is related to the heat kernel of $M$ by%
\begin{equation}
G\left(  x,y\right)  =\int_{0}^{+\infty}p_{t}\left(  x,y\right)  dt.
\label{parab1}%
\end{equation}
In view of (\ref{parab1}), the assumptions of Corollary \ref{prop_parab} are
satisfied whenever we are able to provide a suitable decay estimate on the
heat kernel. However, heat kernel estimates may be used to obtain sharper
results by using directly the definition of the Feller property. This will be
exemplified in Section~\ref{section isoperimetry}.
\smallskip

In  spirit similar to that of Proposition~\ref{prop_comp}, in order to deduce that the manifold at hand is
non-Feller, one can compare with positive subsolutions of
(\ref{khas1}). Note that in this case, the conclusion holds under the additional assumption that the manifold
is stochastically complete.

\begin{theorem}
\label{th_versus_khas}Let $\left(  M,\left\langle ,\right\rangle \right)  $ be
a stochastically complete manifold, and let $v$ be a bounded, positive subsolution of
(\ref{khas1}) in $M\setminus \Omega.$
Then%
\[
h\left(  x\right)  \geq v(x).
\]
In particular, if%
\[
v\left(  x\right)  \not \rightarrow 0,\text{ as }x\rightarrow\infty,
\]
then $M$ is not Feller.
\end{theorem}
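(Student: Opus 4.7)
The plan is to compare $v$ directly with $h$, exploiting the characterization of stochastic completeness recalled in the introduction: every bounded nonnegative $\lambda$-subharmonic function on $M$ must vanish identically. First I would form $w := v - h$ on $M\setminus\overline\Omega$. Since $v$ is bounded and $0 < h \leq 1$, $w$ is bounded; since $v$ is a subsolution and $h$ a solution of the same equation, $\Delta w \geq \lambda w$ holds classically on $M\setminus\overline\Omega$; and since $v \leq 1 = h$ on $\partial\Omega$, we have $w \leq 0$ on $\partial\Omega$. The goal reduces to proving $w \leq 0$ throughout $M\setminus\Omega$, which is equivalent to the asserted inequality $h \geq v$.

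To invoke stochastic completeness, which is a global statement, I would pass to the positive part $u := \max(w,0)$ and extend it by $0$ across $\overline\Omega$. The extension is a continuous, bounded, nonnegative function on all of $M$, identically zero on $\overline\Omega$ and vanishing continuously on $\partial\Omega$. On the open set $\{u > 0\} \subset M\setminus\overline\Omega$, $u$ coincides with $w$ and classically satisfies $\Delta u = \Delta w \geq \lambda w = \lambda u$. At the remaining points the inequality $\Delta u \geq \lambda u = 0$ needs to be interpreted weakly. This is the main technical obstacle: one argues via a Kato-type inequality that $\Delta w^+ \geq \chi_{\{w > 0\}} \Delta w \geq \lambda w^+$ in the distributional sense on $M\setminus\overline\Omega$, and the extension across $\partial\Omega$ introduces only a nonnegative surface measure, because $u$ vanishes on $\overline\Omega$ while $u \geq 0$ on the exterior with vanishing trace, so the jump of the outward normal derivative is nonnegative. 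Thus $u$ is a bounded, nonnegative, weakly $\lambda$-subharmonic function on all of $M$, and stochastic completeness forces $u \equiv 0$, i.e., $v \leq h$ on $M\setminus\Omega$.

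For the second assertion, suppose that $v(x) \not\to 0$ as $x \to \infty$, so that there exist a constant $c > 0$ and a sequence $x_n \to \infty$ with $v(x_n) \geq c$. The pointwise bound $h(x_n) \geq v(x_n) \geq c$ then prevents the minimal solution $h$ from tending to zero at infinity, and Theorem~\ref{th_equiv} immediately implies that $M$ fails to be Feller.
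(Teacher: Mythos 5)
Your proposal is correct and follows essentially the same route as the paper: compare $v-h$, pass to the positive part, extend by zero across $\overline\Omega$, and invoke the Liouville property for bounded $\lambda$-subharmonic functions that characterizes stochastic completeness. The one point you flag as the main technical obstacle --- justifying weak $\lambda$-subharmonicity of the zero extension across $\partial\Omega$ --- is exactly where the paper inserts a small device that makes the issue disappear: it works with $v_\epsilon = v-h-\epsilon$, which equals $-\epsilon$ on $\partial\Omega$, so that $v_\epsilon^+$ vanishes identically on a neighborhood of $\overline\Omega$ and its extension by zero is trivially a global subsolution (the positive part of a subsolution being the maximum of two subsolutions); one then concludes $v\leq h+\epsilon$ and lets $\epsilon\to 0^+$. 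Your Kato-plus-surface-measure argument can be made rigorous, but the $\epsilon$-shift is cleaner and avoids any discussion of normal derivatives or traces on $\partial\Omega$. The deduction of the non-Feller conclusion from $h\geq v$ via Theorem~\ref{th_equiv} is as in the paper.
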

\begin{proof}
For every $\epsilon>0$, let $v_\epsilon= v-h-\epsilon,$ so
that $\Delta v_\epsilon\geq \lambda (v-h)> \lambda v_\epsilon$ on $M\setminus \overline \Omega$
and $v_\epsilon = -\epsilon $ on $\partial \Omega.$ But then $v_\epsilon ^+ =
\max\{0, v_\epsilon\}$ is bounded and satisfies  $\Delta v_\epsilon ^+ \geq\lambda v_\epsilon
^+$ on $M$. The assumption that $M$ is stochastically complete
(see, e.g. \cite{Gr-bams}, Theorem 6.2, or \cite{PRS-pams}) forces $v_\epsilon
^+\equiv 0$, that is, $u\le h+\epsilon.$ Letting $\epsilon \to 0+$
we deduce that $v\leq h$.
\end{proof}

In particular, if $v$ is a bounded positive solution of (\ref{equiv1}) then, by minimality,  $v=h$ and
we deduce the uniqueness property noted at the end of Section~\ref{section exterior}.

\begin{corollary}
\label{th_uniqueness}Let $\left(  M,\left\langle ,\right\rangle \right)  $
\ be a stochastically complete Riemannian manifold. Then, for every smooth open set
$\Omega\subset\subset M$, problem (\ref{equiv1}) \ has a unique, bounded
solution, namely, the minimal solution $h$ constructed in Theorem \ref{th_ext1}.
\end{corollary}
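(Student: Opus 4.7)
The plan is to observe that Corollary~\ref{th_uniqueness} is essentially a direct consequence of Theorem~\ref{th_versus_khas} combined with the minimality statement in Theorem~\ref{th_ext1}. Existence of a bounded solution is already provided by the minimal solution $h$ constructed in Theorem~\ref{th_ext1}, which satisfies $0 < h \le 1$, so only uniqueness needs to be argued.

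Let $v$ be any bounded solution of (\ref{equiv1}). The first step is to verify that $v$ qualifies as a bounded positive subsolution of (\ref{khas1}) in the sense required by Theorem~\ref{th_versus_khas}: indeed $v > 0$ on $M \setminus \Omega$, $v$ is bounded by hypothesis, the equality $\Delta v = \lambda v$ trivially gives $\Delta v \ge \lambda v$ on $M \setminus \overline{\Omega}$, and the boundary condition $v = 1$ on $\partial\Omega$ yields $v \ge 1$ there. Because $M$ is stochastically complete, Theorem~\ref{th_versus_khas} then applies and gives
\[
v(x) \le h(x) \quad \text{on } M \setminus \Omega.
\]

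The second step is to invoke the minimality of $h$ from Theorem~\ref{th_ext1}: any positive solution of (\ref{equiv1}) dominates $h$, so $h \le v$ on $M \setminus \Omega$. Combining the two inequalities yields $v \equiv h$, which is the desired uniqueness.

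There is no substantial obstacle here: the whole work has been absorbed into Theorem~\ref{th_versus_khas}, whose proof in turn relied on the Khas'minskii-type characterization of stochastic completeness applied to the truncation $v_\varepsilon^+$ of $v - h - \varepsilon$. The only small point worth being explicit about is that the hypothesis $h > 0$ in (\ref{equiv1}) (and the corresponding positivity of $v$) is automatic from the strong maximum principle once $v$ equals $1$ on $\partial\Omega$ and is a bounded solution of $\Delta v = \lambda v$, so no extra strict positivity argument is required to invoke Theorem~\ref{th_versus_khas}.
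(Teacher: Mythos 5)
Your proof is correct and is essentially identical to the paper's: the paper also deduces the corollary by combining Theorem~\ref{th_versus_khas} (giving $v\leq h$ for any bounded positive solution $v$) with the minimality of $h$ from Theorem~\ref{th_ext1} (giving $h\leq v$). The only caveat concerns your closing aside: positivity of $v$ is already part of the formulation of problem (\ref{equiv1}), so nothing needs to be checked there, and in any case the strong maximum principle alone would not force a bounded solution on the noncompact set $M\setminus\overline{\Omega}$ to be nonnegative --- one would have to use stochastic completeness to kill the negative part $v^{-}$ before applying the strong minimum principle.
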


Clearly, in order to deduce from Theorem \ref{th_versus_khas} that $M$ is not
Feller, it is vital that the bounded subsolution $v\left(  x\right)  $ does
not converge to zero at infinity. In view of applications, we observe that
such condition can be avoided up to replacing condition $\Delta v\geq\lambda
v$ with a suitable (and in some sense more restrictive) differential
inequality. This is the content of the next

\begin{corollary}
\label{cor_versus_khas}Let $\left(  M,\left\langle ,\right\rangle \right)  $
be a stochastically complete manifold. Assume that, for some smooth open set $\Omega
\subset\subset M$, there exists a bounded solution $u_{\ast}\leq u\left(
x\right)  \leq u^{\ast}$ of the differential inequality%
\[
\Delta u\geq f\left(  u\right)  \text{, on }M\backslash\Omega,
\]
where $f\left(  t\right)  $ is a $C^{1}$ function on $u_{\ast}\leq t\leq
u^{\ast}$ such that%
\[
\text{(a) }f\left(  t\right)  >0\text{, (b) }f^{\prime}\left(  t\right)
\leq\lambda\text{,}%
\]
for some $\lambda>0$. Then $M$ is not Feller.
\end{corollary}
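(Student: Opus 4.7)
The plan is to reduce the nonlinear hypothesis to Theorem~\ref{th_versus_khas} by transforming $u$ into a bounded positive \emph{linear} subsolution of (\ref{khas1}) that stays uniformly bounded away from zero at infinity. The natural ansatz is $v = \phi(u)$ for a smooth scalar function $\phi$, since then
\[
\Delta v = \phi'(u)\,\Delta u + \phi''(u)\,|\nabla u|^{2}.
\]
If I can arrange $\phi' \geq 0$ and $\phi'' \geq 0$, the pointwise inequality $\Delta u \geq f(u)$ yields $\Delta v \geq \phi'(u) f(u)$, and I would like to choose $\phi$ so that the right-hand side equals $\lambda\,\phi(u) = \lambda v$. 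This forces $(\log\phi)' = \lambda/f$ on $[u_{\ast}, u^{\ast}]$.

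Accordingly, I would set
\[
\phi(t) = \exp\!\left(\lambda\int_{u_{\ast}}^{t}\frac{ds}{f(s)}\right), \qquad t \in [u_{\ast}, u^{\ast}],
\]
which is well defined by hypothesis (a). A direct computation gives $\phi' = (\lambda/f)\,\phi > 0$ and
\[
\phi''(t) = \frac{\lambda}{f(t)^{2}}\bigl(\lambda - f'(t)\bigr)\phi(t),
\]
which is nonnegative exactly because of hypothesis (b), namely $f' \leq \lambda$. Thus $v = \phi(u)$ is smooth, bounded (since $u$ takes values in the compact interval $[u_{\ast}, u^{\ast}]$ and $\phi$ is continuous there), satisfies $v \geq \phi(u_{\ast}) = 1$ on $M\setminus\Omega$, and fulfils $\Delta v \geq \lambda v$ on $M\setminus\Omega$.

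It then remains only to match the boundary normalization of (\ref{khas1}). Setting $C = \sup_{\partial\Omega} v \in [1, \phi(u^{\ast})]$ (finite because $\partial\Omega$ is compact) and $\tilde v = v/C$, the function $\tilde v$ is a bounded positive subsolution of (\ref{khas1}) on $M\setminus\Omega$ with the uniform lower bound $\tilde v \geq 1/C > 0$. In particular, $\tilde v \not\to 0$ as $x \to \infty$. Since $M$ is stochastically complete by hypothesis, Theorem~\ref{th_versus_khas} applies and yields that $M$ is not Feller. The only non-routine step is the construction of $\phi$; once one recognises that $f' \leq \lambda$ is exactly the convexity condition needed to discard the (unsigned) gradient term in $\Delta(\phi \circ u)$, the rest is a chain-rule computation followed by an invocation of the previous theorem.
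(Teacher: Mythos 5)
Your proposal is correct and is essentially the paper's own proof: the authors define $F(t)=\int_{u_\ast}^{t}ds/f(s)$ and set $v=e^{\lambda\{F(u)-F(u^\ast)\}}$, which is exactly your $\phi(u)$ normalized by $\phi(u^\ast)$ instead of by $\sup_{\partial\Omega}v$, and they likewise conclude via Theorem~\ref{th_versus_khas}. Your explicit verification that $f'\leq\lambda$ is precisely the convexity condition $\phi''\geq 0$ needed to discard the gradient term is the content of the paper's ``direct computations.''
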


\begin{proof}
Let%
\[
F\left(  t\right)  =\int_{u_{\ast}}^{t}\frac{1}{f\left(  s\right)  }ds
\]
and define a new function $v\left(  x\right)  $ on $M\backslash\Omega$ by
setting%
\[
v\left(  x\right)  =e^{\lambda\left\{  F\left(  u\left(  x\right)  \right)
-F\left(  u^{\ast}\right)  \right\}  }.
\]
Clearly,%
\[
e^{-\lambda F\left(  u^{\ast}\right)  }\leq v\left(  x\right)  \leq1,
\]
and by direct computations we deduce%
\[
\Delta v\geq\lambda v.
\]
The result now follows from Theorem \ref{th_versus_khas}.
\end{proof}

As we have already noted above, the triviality of bounded positive
$\lambda$-subharmonic functions is equivalent to stochastic completeness of the underlying manifold.
As shown by Grigor'yan (see, e.g., \cite{Gr-bams}), the validity of  a similar Liouville property when $\lambda$ is
replaced by a  non-negative function  is related to parabolicity.

This suggests that a comparison result similar to Theorem~\ref{th_versus_khas} holds  for minimal solutions to the exterior problem (\ref{ext1})
\begin{equation}
\left\{
\begin{array}
[c]{l}%
\Delta h=q\left(  x\right)  h\text{ on }M\backslash\bar{\Omega}\\
h=1\text{ on }\partial\Omega\\
h>0\text{ on }M\backslash\Omega,
\end{array}
\right.  \tag{\ref{ext1}}%
\end{equation}
with  $q(x)\geq 0$ on $M$.

\begin{theorem}
\label{th_versus_khas-bis}Let $\left(  M,\left\langle ,\right\rangle \right)  $ be
a parabolic manifold and let $h$ be the minimal positive solution of (\ref{ext1}).
Assume that, for some smooth open set $\Omega
\subset\subset M$, and for some $\lambda>0$, $v$ is a positive subsolution of (\ref{ext1})
Then%
\[
h\left(  x\right)  \geq v\left(  x\right) .
\]
\end{theorem}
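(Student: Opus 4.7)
The plan is to mirror the proof of Theorem~\ref{th_versus_khas}, replacing stochastic completeness by parabolicity via the Ahlfors characterization recalled earlier in the paper. Put $w = v - h$ on $M\setminus\overline{\Omega}$. Subtracting the equation for $h$ from the subsolution inequality for $v$, and using that $q \geq 0$, gives $\Delta w \geq q w$ on $M\setminus\overline{\Omega}$.

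Let $G = \{x \in M\setminus\overline{\Omega} : w(x) > 0\}$, which is open by continuity. On $G$ we have $w > 0$ and $q \geq 0$, so $\Delta w \geq q w \geq 0$: $w$ is classically subharmonic on $G$. Next I would show that the topological boundary $\partial G$ in $M$ is non-empty (assuming for contradiction $G\neq\emptyset$) and that $w$ vanishes on $\partial G$. A boundary point lying in $M\setminus\overline{\Omega}$ satisfies $w = 0$ by continuity. A boundary point $x$ on $\partial\Omega$ has $h(x) = 1$, and the subsolution condition $v \leq 1$ on $\partial\Omega$ forces $w(x) \leq 0$ there; combined with $x\in\overline G$ and continuity this yields $w(x) = 0$. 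Consequently $\sup_{\partial G} w = 0$.

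Assuming (as is implicit in the statement, and as required by the analog in Theorem~\ref{th_versus_khas}) that $v$ is bounded above, the function $w$ is bounded above on $G$ since $0 < h \leq 1$. Then Ahlfors' test, which characterises parabolicity, applied to the bounded above subharmonic function $w$ on the open set $G$, gives
\[
\sup_G w \leq \sup_{\partial G} w = 0,
\]
contradicting $w > 0$ on $G$. Hence $G = \emptyset$, i.e.\ $v \leq h$ on $M\setminus\overline{\Omega}$, as claimed.

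The delicate point I expect is the treatment of the part of $\partial G$ that meets $\partial\Omega$: one must exploit the subsolution inequality $v \leq 1$ on $\partial\Omega$ to guarantee $w = 0$ on that portion of $\partial G$, so that the Ahlfors test has the correct boundary data. The other ingredient worth emphasising is the boundedness of $v$, without which parabolicity cannot be invoked, exactly as in Theorem~\ref{th_versus_khas}. An alternative, more symmetric route would be to work with $w_{\epsilon} = v - h - \epsilon$, form its positive part $w_{\epsilon}^+$ extended by $0$ on $\overline{\Omega}$, verify distributional subharmonicity of this extension on all of $M$ (noting that $w_{\epsilon}^+$ vanishes in a neighbourhood of $\partial\Omega$, which kills the boundary issue there), and then apply parabolicity in the form ``every bounded above subharmonic function on $M$ is constant'' before letting $\epsilon \to 0^+$.
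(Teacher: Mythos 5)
Your proof is correct, but it takes a genuinely different route from the paper's. You work with the difference $w=v-h$, restrict to the open set $G=\{w>0\}$ where the sign condition $q\geq 0$ turns the inequality $\Delta w\geq qw$ into plain subharmonicity, check that $w$ vanishes on all of $\partial G$ (including the portion meeting $\partial\Omega$, where $v\leq 1=h$), and apply the unweighted Ahlfors characterization of parabolicity; your $\epsilon$-shifted variant is exactly the parabolic analogue of the proof of Theorem~\ref{th_versus_khas}, with the Liouville property for bounded $\lambda$-subharmonic functions replaced by the one for bounded-above subharmonic functions. The paper argues instead with the quotient $f=h/v$: it shows $\Delta_{w}f\leq 0$ for the weighted Laplacian with weight $e^{-w}=v^{2}$, transfers parabolicity to the weighted manifold via the capacity estimate $\mathrm{cap}_{w}(K)\leq (v^{*})^{2}\,\mathrm{cap}(K)=0$, and concludes from the weighted minimum principle that $h/v\geq\inf_{\partial\Omega}h/v\geq 1$. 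Your route is more elementary, needing only the unweighted Ahlfors test already recalled in Section~\ref{section monotonicity} and no weighted potential theory; the paper's quotient construction is what motivates introducing the $w$-parabolicity machinery. Both arguments rely in the same way on the two hypotheses you correctly isolate, namely $q\geq 0$ and the boundedness of $v$, the latter being left implicit in the statement but assumed in the paper's own proof when it sets $v^{*}=\sup_{M\setminus\Omega}v<+\infty$.
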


Clearly, Theorem~\ref{th_versus_khas-bis} yields a uniqueness result for bounded positive solutions
to the exterior problem (\ref{ext1}) companion to Corollary~\ref{th_uniqueness}.

The proof  of Theorem~\ref{th_versus_khas-bis} uses some potential theory for diffusion
operators on weighted manifolds. Given a smooth function $w$ on $M$, the
$w$-Laplacian is defined as the diffusion operator%
\[
\Delta_{w}f=e^{w}\operatorname{div}\left(  e^{-w}\nabla f\right)  .
\]
The corresponding weighted manifold $\left(  M,\left\langle ,\right\rangle
,e^{-w}d\mathrm{vol}\right)  $ is said to be $w$-parabolic if every bounded
above solution of $\Delta_{\omega}u\geq0$ must be constant. As in the usual
Riemannian case $w=0$, one has that $w$-parabolicity of a geodesically
complete manifold $\left(  M,\left\langle ,\right\rangle \right)  $ is implied
by the volume growth condition%
\[
\frac{1}{\mathrm{vol}_{w}\left(  \partial B_{r}\left(  o\right)  \right)
}\notin L^{1}\left(  +\infty\right)  ,
\]
where we have set%
\[
\mathrm{vol}_{w}\left(  \partial B_r(o)\right)  =\int_{\partial
B_r(o)}e^{-w}d\mathcal{H}^{m-1},
\]
and $\mathcal{H}^{m-1}$ is the Riemannian $(m-1)$-dimensional
Hausdorff measure.
Furthermore, one can relate the $w$-parabolicity to the vanishing of a
suitable (weighted) capacity of compact subsets. More precisely, for any fixed
closed set $C\subseteq M$, define%
\[
\mathrm{cap}_{w}\left(  C\right)  =\inf\left\{  \int_{M}\left\vert \nabla
u\right\vert ^{2}e^{-w}d\mathrm{vol}:u\in C_{c}^{\infty}\left(  M\right)
\text{ s.t. }u\geq1\text{ on }C\right\}  .
\]
Then, we have

\begin{lemma}
The weighted manifold $\left(  M,\left\langle ,\right\rangle ,e^{-w}%
d\mathrm{vol}\right)  $ is $w$-parabolic if and only if, for every compact set
$K\subset M$, $\mathrm{cap}_{w}\left(  K\right)  =0$.
\end{lemma}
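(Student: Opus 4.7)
The plan is to use the \emph{equilibrium} (or capacity) \emph{potential} of a smooth compact set as the bridge between the two conditions. After noting that it is enough to treat a compact $K$ with smooth boundary (a general compact set can be approximated from outside by such), I would fix a smooth exhaustion $\Omega_{n}\uparrow M$ with $K\subset\subset\Omega_{1}$ and let $u_{n}$ solve $\Delta_{w}u_{n}=0$ on $\Omega_{n}\setminus\overline{K}$ with $u_{n}=1$ on $\partial K$ and $u_{n}=0$ on $\partial\Omega_{n}$. By the maximum principle $u_{n}$ increases to a $w$-harmonic function $u_{\infty}$ on $M\setminus\overline{K}$ with $0\le u_{\infty}\le 1$ and $u_{\infty}=1$ on $\partial K$. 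Extending $u_{n}$ by $1$ on $K$ and $0$ outside $\Omega_{n}$ yields admissible competitors in the variational definition of the capacity, and by minimality together with the weighted divergence theorem
\[
\mathrm{cap}_{w}(K,\Omega_{n})=\int_{\Omega_{n}\setminus K}|\nabla u_{n}|^{2}e^{-w}d\mathrm{vol}=-\int_{\partial K}\partial_{\nu}u_{n}\,e^{-w}d\mathcal{H}^{m-1},
\]
with $\nu$ the outer unit normal to $K$; moreover $\mathrm{cap}_{w}(K,\Omega_{n})\downarrow \mathrm{cap}_{w}(K)$.

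For the implication \textbf{$w$-parabolic $\Rightarrow \mathrm{cap}_{w}(K)=0$} I would first identify $u_{\infty}\equiv 1$. A Green's identity computation shows that the distributional $w$-Laplacian of the extension of $u_{\infty}$ by $1$ on $K$ is supported on $\partial K$ with density $\partial_{\nu}u_{\infty}\,e^{-w}$; since $u_{\infty}\le 1 = u_{\infty}|_{\partial K}$ forces $\partial_{\nu}u_{\infty}\le 0$, the extension is $w$-superharmonic, so that $1-u_{\infty}$ is a bounded, non-negative, $w$-subharmonic function on $M$. Parabolicity forces it to be constant, and the vanishing on $K$ gives $u_{\infty}\equiv 1$. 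To convert this into a capacity statement, I would pick $K\subset\subset K'\subset\subset\Omega_{1}$ with $K'$ smooth and apply the weighted divergence theorem to the $w$-harmonic $u_{n}$ on the collar $K'\setminus K$ to obtain
\[
-\int_{\partial K}\partial_{\nu}u_{n}\,e^{-w}d\mathcal{H}^{m-1}=-\int_{\partial K'}\partial_{\nu'}u_{n}\,e^{-w}d\mathcal{H}^{m-1}.
\]
Locally uniform convergence $u_{n}\to 1$ on $M\setminus\overline{K}$ combined with interior Schauder estimates for $\Delta_{w}$ then upgrade to uniform convergence $\nabla u_{n}\to 0$ on the compact set $\partial K'$, so the right-hand side tends to $0$.

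For the converse, I would start from $f$ bounded above with $\Delta_{w}f\ge 0$; up to translation $\sup_{M}f=0$. Fix a smooth compact $K$, set $c=\sup_{\partial K}f\le 0$, and compare $f$ with the $w$-harmonic function $cu_{n}$ on $\Omega_{n}\setminus K$: since $cu_{n}=c\ge f$ on $\partial K$ and $cu_{n}=0\ge f$ on $\partial\Omega_{n}$, the maximum principle yields $f\le cu_{n}$ and, letting $n\to\infty$, $f\le cu_{\infty}$ on $M\setminus K$. From $\mathrm{cap}_{w}(K)=0$, weak $H^{1}$-compactness of the admissible sequence $\{u_{n}\}$ together with weak lower semicontinuity of the weighted Dirichlet energy gives $\nabla u_{\infty}\equiv 0$ a.e.; since $u_{\infty}=1$ on $\partial K$ and every connected component of $M\setminus\overline{K}$ touches $\partial K$, this forces $u_{\infty}\equiv 1$ on $M\setminus K$. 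Hence $f\le c=\sup_{\partial K}f$ on $M\setminus K$, so $\sup_{M}f=\sup_{K}f$ for every smooth compact $K$. Specialising to $K=\overline{B_{\rho}(p)}$ and letting $\rho\to 0^{+}$, continuity of $f$ gives $f(p)=0$ for every $p\in M$, so $f$ is constant.

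The chief obstacle, in each direction, is identifying the limit $u_{\infty}$: in the forward implication one must carry out the distributional jump computation across $\partial K$ and justify that the extension of $u_{\infty}$ is globally $w$-superharmonic, so that the definition of $w$-parabolicity can be applied; in the converse one must upgrade the purely energetic information $\mathrm{cap}_{w}(K)=0$ to the pointwise identity $u_{\infty}\equiv 1$, which requires viewing $\{u_{n}\}$ as a bounded sequence in the weighted $H^{1}$ space and applying weak compactness together with lower semicontinuity of the Dirichlet energy. The remaining ingredients---the weighted divergence theorem, the maximum principle for $\Delta_{w}$, and interior Schauder estimates on the collar $\partial K'$---are standard and transfer verbatim from the unweighted Riemannian case.
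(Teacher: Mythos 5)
The paper states this lemma without proof, quoting it as a standard fact of (weighted) potential theory, so there is no in-paper argument to compare against; your proposal has to stand on its own, and it does. Your route is the classical one: build the equilibrium potentials $u_{n}$ of the condenser $(K,\Omega_{n})$, identify $\mathrm{cap}_{w}(K,\Omega_{n})$ with both the weighted Dirichlet energy of $u_{n}$ and the boundary flux $-\int_{\partial K}\partial_{\nu}u_{n}\,e^{-w}d\mathcal{H}^{m-1}$, and read off both implications from the dichotomy $u_{\infty}\equiv 1$ versus $u_{\infty}\not\equiv 1$. All the steps check out: the jump computation across $\partial K$ does show that the extension of $u_{\infty}$ by $1$ is $w$-superharmonic in the distributional sense (with the sign $\partial_{\nu}u_{\infty}\le 0$ coming from $u_{\infty}\le 1=u_{\infty}|_{\partial K}$), so $1-u_{\infty}$ is a legitimate bounded $w$-subharmonic function to which the definition of $w$-parabolicity applies; the flux identity between $\partial K$ and $\partial K'$ plus interior gradient estimates correctly converts $u_{\infty}\equiv 1$ into $\mathrm{cap}_{w}(K)=0$; and in the converse the comparison $f\le c\,u_{n}$ together with $\nabla u_{\infty}\equiv 0$ (from lower semicontinuity of the energy) and the shrinking-ball argument yields constancy of $f$. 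The only points requiring care — interpreting $\Delta_{w}u\ge 0$ weakly for the glued function, the identification $\lim_{n}\mathrm{cap}_{w}(K,\Omega_{n})=\mathrm{cap}_{w}(K)$, and the continuity of $u_{\infty}$ up to $\partial K$ needed to conclude $u_{\infty}\equiv 1$ on each component of $M\setminus\overline{K}$ — are exactly the ones you flag, and each is handled by standard barrier, density, and monotone-convergence arguments that transfer verbatim from the unweighted case. I find no gap.
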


Finally, one has a weighted version of the Ahlfors-type characterization:

\begin{lemma}
The wighted manifold $\left(  M,\left\langle ,\right\rangle ,e^{-w}%
d\mathrm{vol}\right)  $ is $w$-parabolic if and only if given an open set
$G\subset M$ and a bounded above solution $f$ of $\Delta_{\omega}f\geq0$ on
$G$ it holds%
\[
\sup_{G}f\leq\sup_{\partial G}f.
\]

\end{lemma}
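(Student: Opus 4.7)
The plan is to prove the two implications separately, mirroring the unweighted Ahlfors characterization but transplanted to the weighted setting, with both arguments ultimately reducing to the definition of $w$-parabolicity.

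For the direction $w$-parabolic $\Rightarrow$ Ahlfors-type maximum principle, I would argue by contradiction. Fix an open set $G\subset M$ and a bounded above solution $f$ of $\Delta_{w}f\geq 0$ on $G$, and suppose for contradiction that $S:=\sup_{\partial G}f<\sup_{G}f$. Define $\phi:M\to[0,+\infty)$ by setting $\phi(x)=\max\{f(x)-S,\,0\}$ for $x\in\overline{G}$ and $\phi(x)=0$ for $x\in M\setminus\overline{G}$. Since $f\leq S$ on $\partial G$, the two pieces agree there, so $\phi$ is continuous on $M$; it is bounded above because $f$ is, and non-constant because $\sup_{M}\phi=\sup_{G}f-S>0$. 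On $G$, $\phi$ is the maximum of the two $w$-subsolutions $f-S$ and $0$, hence a $w$-subsolution; on $M\setminus\overline{G}$, it vanishes identically. Gluing these two continuous $w$-subsolutions across $\partial G$ produces a bounded above, non-constant, weak solution of $\Delta_{w}\phi\geq 0$ on all of $M$, contradicting the assumed $w$-parabolicity.

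For the reverse direction, suppose the Ahlfors-type maximum principle holds and let $u$ be any bounded above solution of $\Delta_{w}u\geq 0$ on $M$. Fix an arbitrary $x_{0}\in M$ and apply the maximum principle to the open set $G=M\setminus\{x_{0}\}$, on which $u$ still satisfies $\Delta_{w}u\geq 0$ and is bounded above. Since $\partial G=\{x_{0}\}$, the hypothesis yields
\[
\sup_{M\setminus\{x_{0}\}}u\;\leq\;\sup_{\{x_{0}\}}u\;=\;u(x_{0}),
\]
and combining with the trivial bound $u(x_{0})\leq\sup_{M}u$ forces $u(x_{0})=\sup_{M}u$. Since $x_{0}\in M$ was arbitrary, $u$ attains its supremum at every point and is therefore constant, which establishes $w$-parabolicity.

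The main obstacle is the gluing step in the forward implication: one must verify that the piecewise-defined $\phi$ is a $w$-subsolution in the distributional sense across $\partial G$, where $\phi$ need not be smooth. Because $\Delta_{w}=e^{w}\operatorname{div}(e^{-w}\nabla\cdot)$ is a uniformly elliptic divergence-form operator (with the divergence taken with respect to the weighted measure $e^{-w}\,d\mathrm{vol}$), the standard argument for the classical Laplace-Beltrami operator carries over verbatim. Concretely, testing against a non-negative $\psi\in C_{c}^{\infty}(M)$ and integrating by parts reduces the verification to the two open pieces $G$ and $M\setminus\overline{G}$ separately; the continuity of $\phi$ together with $\phi\equiv 0$ on $\partial G$ ensures that no concentrated boundary contribution arises. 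Once this routine check is in place, both implications follow from the definition of $w$-parabolicity as stated in the excerpt.
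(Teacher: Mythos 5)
The paper does not actually prove this lemma: it is quoted as the weighted analogue of the Ahlfors--Sario characterization of parabolicity (the unweighted case being Theorem 6.C of \cite{AhSa-book}), so there is no proof in the text to compare yours against. Your overall architecture is the standard one and is sound. In particular, your reverse implication via $G=M\setminus\{x_0\}$ is a legitimate instantiation of the hypothesis as stated (which quantifies over \emph{all} open sets $G$), although the more robust and customary choice is $G=\{u>c\}$ for $\inf_M u<c<\sup_M u$: then $\partial G\subset\{u=c\}$ gives $\sup_{\partial G}u\le c<\sup_G u$ directly, without invoking the criterion for an open set whose boundary is a single (polar) point.

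The one step that does not hold up as written is your justification of the gluing in the forward direction. It is not true that ``no concentrated boundary contribution arises'' merely because $\phi$ is continuous and vanishes on $\partial G$: when you integrate $\int_G\langle\nabla\phi,\nabla\psi\rangle e^{-w}$ by parts, a boundary term $\int_{\partial G}\psi\,e^{-w}\,\partial_\nu\phi$ survives, and what saves the argument is its \emph{sign} ($\phi\ge 0$ in $G$ and $\phi=0$ on $\partial G$ force $\partial_\nu\phi\le 0$ for the outward normal), not its absence; moreover this computation presupposes a smooth $\partial G$ and one-sided $C^1$ regularity of $\phi$ up to it, neither of which you have for a general open set and a function defined as a maximum. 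The standard repairs are either (i) to verify the sub-mean-value inequality for $\phi$ directly at points of $\partial G$, where it is immediate since $\phi=0$ there and $\phi\ge 0$ everywhere, or (ii) to replace $\phi$ by $\phi_\varepsilon=\max\{f-S-\varepsilon,0\}$ extended by zero, which vanishes on a neighborhood of $\partial G$ (using that $f\le S$ on $\partial G$ and $f$ is continuous up to the boundary), so that $\phi_\varepsilon$ is locally either a maximum of two subsolutions in $G$ or identically zero, hence a global bounded subsolution with no boundary issue at all; $w$-parabolicity then gives $\phi_\varepsilon\equiv 0$, i.e.\ $f\le S+\varepsilon$, and you let $\varepsilon\to 0$. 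With either fix in place your proof is complete; also note that the equivalence implicitly requires $\partial G\neq\emptyset$, since for $G=M$ the asserted inequality is vacuously false for constants.
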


We are now ready to give the

\begin{proof}
[Proof (of Theorem \ref{th_versus_khas-bis})]Let $h$ be the minimal solution of
problem (\ref{ext1}).
Then, the new function%
\[
f=\frac{h}{v}%
\]
satisfies%
\[
\Delta_{w}f\leq0,
\]
with%
\[
w=-\log v^{2}.
\]
Furthermore, having set%
\[
\sup_{M\backslash\Omega}v=v^{\ast}<+\infty,
\]
we have%
\[
0\leq\mathrm{cap}_{w}\left(  K\right)  \leq\left(  v^{\ast}\right)
^{2}\mathrm{cap}\left(  K\right)  =0,
\]
for every compact set $K\subset M$, proving that $\left(  M,\left\langle
,\right\rangle ,e^{-w}d\mathrm{vol}\right)  $ is $w$-parabolic. By the global
minimum principle on $M\backslash\Omega$ we deduce%
\[
f=\frac{h}{v}\geq\inf_{\partial\Omega}f\geq 1,
\]
as desired.
\end{proof}

We  now apply Proposition~\ref{prop_comp} and Theorem~\ref{th_versus_khas} to obtain
comparison results with model manifolds mentioned at the beginning of the section.

\begin{theorem}
\label{th_comparison}Let $\left(  M,\left\langle ,\right\rangle \right)  $ be
a complete, $m$-dimensional Riemannian manifold

(a) Assume that $M$ has a pole $o$. Suppose that the radial sectional
curvature with respect to $o$ satisfies%
\begin{equation}
^{M}Sec_{rad}\leq G\left(  r\left(  x\right)  \right)  \text{ on }M,
\label{comp1}%
\end{equation}
for some smooth even function $G:\mathbb{R}\rightarrow\mathbb{R}$. Let
$g:[0,+\infty)\rightarrow\lbrack0+\infty)$ be the unique solution of the
Cauchy problem%
\begin{equation}
\left\{
\begin{array}
[c]{l}%
g^{\prime\prime}+Gg=0\\
g\left(  0\right)  =0,\text{ }g^{\prime}\left(  0\right)  =1.
\end{array}
\right.  \label{comp2}%
\end{equation}
If the $m$-dimensional model $M_{g}$ has the Feller property then also $M$ is Feller.

(b) Assume that the radial Ricci curvature of $M$ satisfies%
\[
^{M}Ric\left(  \nabla r,\nabla r\right)  \geq\left(  m-1\right)  G\left(
r\left(  x\right)  \right)  ,
\]
where $r\left(  x\right)  =dist\left(  x,o\right)  $, for some $o\in M$, and
$G:\mathbb{R}\rightarrow\mathbb{R}$ is a smooth, even function. Let
$g:[0,+\infty)\rightarrow\lbrack0,+\infty)$ be the unique solution of the
problem%
\[
\left\{
\begin{array}
[c]{l}%
g^{\prime\prime}+Gg=0\\
g\left(  0\right)  =0\text{, }g^{\prime}\left(  0\right)  =1.
\end{array}
\right.
\]
If the $m$-dimensional model $M_{g}$ has finite volume and it does not satisfy
the Feller property then also $M$ is not Feller.
\end{theorem}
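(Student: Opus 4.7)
The plan is, for both parts, to transplant the minimal radial solution $h_g$ of the model o.d.e.~(\ref{equiv3}) to $M$ via the composition $v(x)=h_g(r(x))$ with $r(x)=d(x,o)$, and then to convert the ODE satisfied by $h_g$ into a one-sided differential inequality for $v$ by the appropriate Laplacian comparison theorem. Once $v$ is shown to be a supersolution (resp.\ subsolution) of the exterior problem (\ref{khas1}) with the correct behaviour at infinity, Proposition~\ref{prop_comp} will handle case (a) and Theorem~\ref{th_versus_khas} will handle case (b). Throughout I use that $h_g'<0$ on $[R_0,+\infty)$: this follows from Lemma~\ref{lemma_decreasing2} when $g^{1-m}\notin L^1(+\infty)$, and in the non-parabolic regime it is immediate from Lemma~\ref{lemma_comp}, since $h_g'(R_1)\geq 0$ somewhere would propagate non-decreasingness to $[R_1,+\infty)$, contradicting either $h_g\to 0$ (when $M_g$ is Feller, case (a)) or $h_g$ being a bounded positive solution (case (b)).

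For part~(a) the hypothesis that $o$ is a pole is crucial and convenient: $r\in C^\infty(M\setminus\{o\})$ with no cut locus. Under $^M Sec_{rad}\leq G(r)$ the Hessian/Laplacian comparison theorem yields $\Delta r\geq (m-1)\,g'/g$ pointwise on $M\setminus\{o\}$. Using the ODE satisfied by $h_g$ together with $h_g'\leq 0$ I obtain
\[
\Delta v = h_g'' + h_g'\,\Delta r \leq h_g'' + (m-1)\frac{g'}{g}h_g' = \lambda h_g = \lambda v
\]
on $M\setminus B_{R_0}(o)$, with $v=1$ on $\partial B_{R_0}(o)$ and $v(x)\to 0$ as $x\to\infty$ since $M_g$ is Feller. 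Thus $v$ is a positive supersolution of (\ref{khas1}) with vanishing boundary value at infinity, and Proposition~\ref{prop_comp} gives that $M$ is Feller.

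For part~(b), I first observe that the Ricci lower bound combined with Bishop--Gromov volume comparison gives $\mathrm{vol}(B_r^M(o))\leq C\,\mathrm{vol}(B_r^{M_g})$, so the finite-volume hypothesis on $M_g$ forces $\mathrm{vol}(M)<+\infty$; by the Grigor'yan volume test, $M$ is therefore stochastically complete. The Laplacian comparison under $^M Ric\geq (m-1)G$ reverses direction: $\Delta r\leq (m-1)\,g'/g$ pointwise off $\{o\}\cup\mathrm{cut}(o)$. Since $h_g'<0$ and $L:=\lim_{r\to+\infty}h_g(r)>0$ (because $M_g$ is not Feller and $h_g$ is monotone), the same computation now runs in reverse and gives $\Delta v\geq\lambda v$ on $M\setminus B_{R_0}(o)$, with $v=1$ on $\partial B_{R_0}(o)$ and $v(x)\to L>0$ at infinity. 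By the stochastic completeness of $M$, Theorem~\ref{th_versus_khas} yields $h\geq v$, so the minimal solution $h$ does not tend to $0$ at infinity, and $M$ is not Feller.

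The main obstacle is the behaviour across the cut locus of $o$ in part~(b): the Laplacian comparison is only pointwise off $\mathrm{cut}(o)$, so one must check that the subsolution inequality for $v=h_g\circ r$ persists globally in a sense strong enough for Theorem~\ref{th_versus_khas}. Since $h_g'<0$ and the distributional singular part of $\Delta r$ on $\mathrm{cut}(o)$ is a non-positive measure, the product $h_g'\cdot\Delta r_{\mathrm{sing}}$ is a non-negative measure, so $\Delta v\geq\lambda v$ holds in the sense of distributions on $M\setminus B_{R_0}(o)$, which is sufficient for the weak maximum-principle argument underlying Theorem~\ref{th_versus_khas}; alternatively the standard Calabi barrier trick, approximating $o$ by nearby smooth base points along a minimizing geodesic, allows one to bypass the cut locus entirely.
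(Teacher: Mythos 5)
Your proof is correct, and part (a) is essentially identical to the paper's argument: transplant the minimal radial solution of the model ODE, use $u'<0$ together with the Hessian comparison $\Delta r\geq (m-1)g'/g$ (valid globally since $o$ is a pole) to get a vanishing supersolution, and invoke Proposition~\ref{prop_comp}. In part (b) you take a genuinely different, though closely related, route. The paper does \emph{not} transplant the minimal solution $h_g$ of the model exterior problem; instead it uses Theorem~\ref{th_model} to extract the integrability condition $\int_r^{+\infty}g^{m-1}\,/\,g^{m-1}(r)\in L^1(+\infty)$ from the failure of the Feller property, builds the explicit bounded function $\alpha(r)=\int_r^{+\infty}\bigl(\int_s^{+\infty}g^{m-1}\bigr)g^{1-m}(s)\,ds$ with $\Delta_{M_g}\alpha=1$, transplants it to get $\Delta v\geq 1$ via Laplacian comparison, and then applies Corollary~\ref{cor_versus_khas} with $f\equiv 1$ (which internally performs a change of dependent variable to reduce to Theorem~\ref{th_versus_khas}). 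Your version goes straight to Theorem~\ref{th_versus_khas} by transplanting $h_g$ itself, using Corollary~\ref{cor_equiv} to know $h_g\downarrow L>0$ and Lemma~\ref{lemma_decreasing2} (applicable since finite volume forces $g^{1-m}\notin L^1$) for the monotonicity. Both arguments rest on the same pillars — Bishop giving $\mathrm{vol}(M)<\infty$, hence parabolicity and stochastic completeness, and the sign $h'\leq 0$ (resp.\ $\alpha'\leq 0$) to run the Laplacian comparison in the right direction — but yours is arguably more economical, bypassing the explicit integral test of Theorem~\ref{th_model} and the auxiliary function of Corollary~\ref{cor_versus_khas}; what the paper's choice buys is a subsolution defined by a closed formula, independent of any qualitative information about the minimal solution. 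Your explicit treatment of the cut locus (nonpositive singular part of $\Delta r$ times $h_g'<0$, or the Calabi trick) is a point the paper glosses over even in its own version, and it is needed there too. The only imprecision is your parenthetical claim that $h_g'(R_1)\geq 0$ would contradict mere boundedness of $h_g$ in case (b): Lemma~\ref{lemma_comp} plus boundedness alone does not yield a contradiction on a stochastically incomplete model; but this is harmless since in case (b) the finite-volume hypothesis puts you in the parabolic regime where Lemma~\ref{lemma_decreasing2} applies directly, as you also note.
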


\begin{proof}
Let $u$ be the minimal solution of%
\begin{equation}
\left\{
\begin{array}
[c]{l}%
u^{\prime\prime}+\left(  m-1\right)  \dfrac{g^{\prime}}{g}u^{\prime}=\lambda
u\text{ on }[1,+\infty)\\
u\left(  1\right)  =1\\
u\left(  r\right)  >0\text{ on }[1,+\infty).
\end{array}
\right.  \label{comp9}%
\end{equation}
By Corollary \ref{cor_equiv}, $u\left(  r\right)  \rightarrow0$ as
$r\rightarrow+\infty.$ In particular, according to Lemma \ref{lemma_comp},
$u^{\prime}<0$ on $[1,+\infty)$.

Consider now the radial smooth function $v\left(  x\right)  =u\left(  r\left(
x\right)  \right)  $ on $M\backslash B_{1}\left(  o\right)  $. Note that%
\begin{equation}
\Delta v=u^{\prime\prime}+u^{\prime}\Delta r. \label{comp10}%
\end{equation}
Since $u^{\prime}<0$ and, by Hessian comparison,%
\begin{equation}
\Delta r\geq\left(  m-1\right)  \frac{g^{\prime}}{g}, \label{comp11}%
\end{equation}
we deduce%
\begin{equation}
\Delta v\leq u^{\prime\prime}+\left(  m-1\right)  \frac{g^{\prime}}%
{g}u^{\prime}=\Delta_{M_{g}}u=\lambda u=\lambda v. \label{comp12}%
\end{equation}
Summarizing,%
\begin{equation}
\left\{
\begin{array}
[c]{l}%
\Delta v\leq\lambda v\text{ on }M\backslash B_{1}\left(  o\right) \\
v=1\text{ on }\partial B_{1}\left(  o\right) \\
v>0\text{ on }M\backslash B_{1}\left(  o\right) \\
\lim_{r\left(  x\right)  \rightarrow+\infty}v\left(  x\right)  =0.
\end{array}
\right.  \label{comp13}%
\end{equation}
To conclude, we apply the comparison principle stated in Proposition
\ref{prop_comp} above.

(b) By assumption, $g^{m-1}\in L^{1}\left(  +\infty\right)  $ so that
$1/g^{m-1}\notin L^{1}\left(  +\infty\right)  $. Since $M_{g}$ is not Feller,
by Theorem \ref{th_model} we must have%
\[
\frac{\int_{r}^{+\infty}g^{m-1}\left(  t\right)  dt}{g^{m-1}\left(  r\right)
}\in L^{1}\left(  +\infty\right)  .
\]
Define%
\[
\alpha\left(  r\right)  =\int_{r}^{+\infty}\frac{\int_{s}^{+\infty}%
g^{m-1}\left(  t\right)  dt}{g^{m-1}\left(  s\right)  }ds.
\]
A direct computation shows that%
\[
^{M_{g}}\Delta\alpha=1.
\]
Now consider%
\[
v\left(  x\right)  =\alpha\left(  r\left(  x\right)  \right)  \text{ on
}M\backslash B_{1}.
\]
Clearly, $v$ is a positive bounded function. Since $\alpha^{\prime}\leq0$, by
Laplacian comparison we have%
\[
\Delta v\geq1.
\]
On the other hand, by the Bishop volume comparison theorem it holds%
\[
\mathrm{vol}\left(  M\right)  \leq\mathrm{vol}\left(  M_{g}\right)  <+\infty.
\]
In particular $M$ is parabolic. Applying Corollary \ref{cor_versus_khas} with
the choice $f\left(  t\right)  =1$ we conclude that $M$ is not Feller.
\end{proof}

\section{Feller property on manifolds with many ends\label{section ends}}

It is a trivial consequence of Theorem \ref{th_equiv} that Riemannian
manifolds which are isometric outside a compact set have the same behavior
with respect to the Feller property. The choice of a smooth compact set
$\Omega$ in the complete manifold $\left(  M,\left\langle ,\right\rangle
\right)  $ gives rise to a finite number of unbounded connected components,
say $E_{1},...,E_{k}$. They are called the ends of $M$ with respect to
$\Omega$. Thus, the minimal solution $h$ of (\ref{equiv1}) restricts to the
minimal solution $h_{j}$ of the same Dirichlet problem on $E_{j}$ with respect
to the compact boundary $\partial E_{j}$. Furthermore,\ $h$ tends to zero at
infinity in $M$ if and only if each function $h_{j}\left(  x\right)
\rightarrow0$ as $E_{j}\ni x\rightarrow\infty$.

This situation suggests to localize the definition of the Feller property to a
given end by saying that $E$ is Feller if, for some $\lambda>0$, the minimal
solution $g:E\rightarrow(0,1]$ of the Dirichlet problem%
\[
\left\{
\begin{array}
[c]{ll}%
\Delta g=\lambda g & \text{on \textrm{int}}\left(  E\right) \\
g=1 & \text{on }\partial E,
\end{array}
\right.
\]
satisfies $g\left(  x\right)  \rightarrow0$ as $x\rightarrow\infty$. The usual
exhausting procedure shows that $g$ actually exists.

Now, let $E_{1},...,E_{k}$ be the ends of $M$ with respect to the compact set
$\Omega$. Then, we can enlarge slightly $\Omega$ to a new compact
$\Omega^{\prime}$ which encloses a small collar neighborhood $W_{j}$ of each
$\partial E_{j}\subset E_{j}$. Since the validity of the Feller property on
$M$ is not sensitive of the chosen compact, we deduce that $M$ is Feller if
and only if each $E_{j}^{\prime}=E_{j}\backslash W_{j}$ is Feller. This
implies that, in case we have isometries $f_{j}:\partial E_{j}\rightarrow
\partial D_{j}$ \ onto the boundaries of compact Riemannian manifolds
$(D_{j},\left\langle ,\right\rangle _{D_{j}})$, then $M$ is Feller if and only
if so is each Riemannian gluing (without boundary) $E_{j}\cup_{f_{j}}D_{j}$.
Recall that, by definition, $E_{j}\cup_{f_{j}}D_{j}$ has the original metrics
outside a small bicollar neighborhood of the glued boundaries. Along the same
lines we can easily obtain that $M$ is Feller if and only if the Riemannian
double $\mathcal{D}\left(  E_{j}\right)  $ of each end $E_{j}$ has the same
property. We have thus obtained the following

\begin{proposition}
\label{prop_ends}Let $\left(  M,\left\langle ,\right\rangle \right)  $ be a
complete Riemanian manifold and let $E_{1},...,E_{k}$ be the ends of $M$ with
respect to the smooth compact domain $\Omega$. Then, the following are equivalent:

\begin{enumerate}
\item $M$ is Feller

\item Each end $E_{j}$ has the Feller property.

\item Each end with a cap $E_{j}\cup_{f_{j}}D_{j}$ (if possible) has the
Feller property.

\item The double $\mathcal{D}\left(  E_{j}\right)  $ of each end has the
Feller property.
\end{enumerate}
\end{proposition}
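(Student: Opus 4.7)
The plan is to leverage Theorem~\ref{th_equiv} together with the observation that the minimal solution of the exterior problem decomposes along ends, and to exploit the invariance of the Feller property under compact modifications of the ambient manifold.

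The equivalence (1)~$\Leftrightarrow$~(2) is the heart of the matter. I would fix a smooth relatively compact exhaustion $\{\Omega_{n}\}$ of $M$ with $\Omega \subset\subset \Omega_{1}$ and take $h_{n}$ to be the Dirichlet approximant of Theorem~\ref{th_ext1} on $\Omega_{n}\setminus\overline\Omega$, whose monotone limit $h$ is the minimal solution of~(\ref{equiv1}). The point is that $\Omega_{n}\setminus\overline\Omega$ decomposes as the disjoint union of the open pieces $U_{n,j}:=\Omega_{n}\cap E_{j}$, and the restriction $h_{n}|_{U_{n,j}}$ solves precisely the Dirichlet problem defining the $n$-th approximant to the minimal solution $g_{j}$ on the end $E_{j}$ (namely $\Delta=\lambda\cdot$, boundary value $1$ on $\partial E_{j}$, boundary value $0$ on $\partial \Omega_{n}\cap E_{j}$). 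Passing to the limit yields $h|_{E_{j}}=g_{j}$. Since the number of ends is finite, any sequence tending to infinity in $M$ is eventually contained in a single end, so $h\to 0$ at infinity on $M$ is equivalent to $g_{j}\to 0$ at infinity on $E_{j}$ for every $j$. By Theorem~\ref{th_equiv} applied to $M$ and by the definition of the Feller property for an end, this is (1)~$\Leftrightarrow$~(2).

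For (1)~$\Leftrightarrow$~(3) and (1)~$\Leftrightarrow$~(4) I would use two further observations. Firstly, replacing $\Omega$ by $\Omega':=\Omega\cup\bigcup_{j}\overline{W_{j}}$ for a compact collar $W_{j}\subset E_{j}$ of each $\partial E_{j}$ produces new ends $E'_{j}=E_{j}\setminus\overline{W_{j}}$; since the Feller property of $M$ is insensitive to the choice of base compactum (apply Theorem~\ref{th_equiv} twice, once with $\Omega$ and once with $\Omega'$), each $E_{j}$ is Feller if and only if the corresponding $E'_{j}$ is. Secondly, both the capped manifold $N_{j}:=E_{j}\cup_{f_{j}}D_{j}$ and the double $\mathcal{D}(E_{j})$ are complete boundaryless Riemannian manifolds which, with respect respectively to the compact sets $D_{j}\cup\overline{W_{j}}$ and a bicollar of the glued boundary, have ends isometric to $E'_{j}$: exactly one such end for $N_{j}$, and two isometric copies for $\mathcal{D}(E_{j})$. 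Applying the already established equivalence (1)~$\Leftrightarrow$~(2) to $N_{j}$ and to $\mathcal{D}(E_{j})$, combined with the first observation, yields the desired equivalences with (3) and (4).

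The main obstacle is purely technical and lies entirely in the decomposition step for (1)~$\Leftrightarrow$~(2): one must ensure (after a harmless smoothing of corners if necessary) that the exhaustion $\{\Omega_{n}\}$ meets each $E_{j}$ in a smooth exhaustion of $E_{j}$, and check that $h_{n}|_{U_{n,j}}$ genuinely carries the boundary values ($1$ on $\partial E_{j}$, $0$ on $\partial\Omega_{n}\cap E_{j}$) that identify it with the $n$-th approximant of $g_{j}$. Once this bookkeeping is in place, the remainder of the proposition follows from the elliptic characterization of Theorem~\ref{th_equiv} applied successively to $M$, to $N_{j}$, and to $\mathcal{D}(E_{j})$.
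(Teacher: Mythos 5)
Your proposal is correct and follows essentially the same route as the paper: the identification of the restriction of the minimal exterior solution with the minimal solution on each end (via the Dirichlet approximants of Theorem~\ref{th_ext1}), the insensitivity of the Feller property to the choice of base compactum via Theorem~\ref{th_equiv}, and the observation that capping and doubling only alter the manifold inside a compact set. The extra bookkeeping you flag about the exhaustion meeting each end smoothly is harmless and is passed over silently in the paper's own discussion.
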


Using this observation, one can easily construct new Feller or non-Feller
manifolds from old ones by adding suitable ends. For instance, consider the
equidimensional, complete Riemannian manifolds $M$ and $N$ and form their
connected sum $M\#N$. This latter is Feller if and only if both $M$ and $N$
has the Feller property.

In the special case of warped products with rotational symmetry, combining Theorem \ref{th_model} with Proposition \ref{prop_ends}, we are able to obtain the following characterization.

\begin{example}
Consider the warped product of the form%
\[
\mathbb{R}\times_{f}\mathbb{S}^{m-1}=\left(  \mathbb{R}\times\mathbb{S}%
^{m-1},dr^{2}+f\left(  r\right)  ^{2}d\theta^{2}\right)
\]
where $f\left(  r\right)  >0$ is a smooth function on $\mathbb{R}$. This is a
complete manifold (without boundary) with two ends. Let $E_{1}=(1,+\infty
)\times_{f}\mathbb{S}^{m-1}$ and $E_{2}=(-\infty,1)\times_{f}\mathbb{S}^{m-1}$
be the ends of $\mathbb{R}\times_{f}\mathbb{S}^{m-1}$ with respect to the
compact domain $\Omega=[-1,1]\times\mathbb{S}^{m-1}$. Using the closed unit
disc $\mathbb{D}^{m}$ as a cap, starting from $E_{1}$ and $E_{2}$ we can
construct complete \ manifolds without boundary each isometric to a model
manifold. Precisely, $E_{1}$ gives rise to
\[
E_{g_{1}}=\left(  [0,+\infty)\times\mathbb{S}^{m-1},dr^{2}+g_{1}\left(
r\right)  ^{2}d\theta^{2}\right)
\]
where $g_{1}:[0,+\infty)\rightarrow\lbrack0,+\infty)$ satisfies $g_{1}\left(
r\right)  =r$ if $0\leq r<1-\varepsilon$ and $g_{1}\left(  r\right)  =f\left(
r\right)  $ if $r>1+\varepsilon$. Similarly,%
\[
E_{g_{2}}=\left(  [0,+\infty)\times\mathbb{S}^{m-1},dr^{2}+g_{2}\left(
r\right)  ^{2}d\theta^{2}\right)
\]
where $g_{2}:[0,+\infty)\rightarrow\lbrack0,+\infty)$ satisfies $g_{2}\left(
r\right)  =r$ if $0\leq r<1-\varepsilon$ and $g_{2}\left(  r\right)  =f\left(
-r\right)  $ if $r>1+\varepsilon$. By Proposition \ref{prop_ends},
$\mathbb{R}\times_{f}\mathbb{S}^{m-1}$ \ is Feller if and only if both
$E_{g_{1}}$ and $E_{g_{2}}$ are Feller. Since, according to Theorem
\ref{th_model}, the Feller property on model manifolds is completely
characterized by the asymptotic behavior of the warping functions, we obtain
the next
\end{example}

\begin{corollary}
\label{cor_warped}The warped product $\mathbb{R}\times
_{f}\mathbb{S}^{m-1}$ has the Feller property \ if and only if both $g\left(
t\right)  =f\left(  t\right)  ,$ $t>>1$, and $g\left(  t\right)  =f\left(
-t\right)  $, $t<<1$, satisfy either of the conditions (\ref{model1}) or
(\ref{model2}) of Theorem \ref{th_model}.
\end{corollary}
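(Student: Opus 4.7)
The plan is to combine Proposition \ref{prop_ends} with Theorem \ref{th_model}, using the capped ends $E_{g_1}$ and $E_{g_2}$ as bridges between the warped product and genuine model manifolds.

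First I would split $\mathbb{R}\times_f \mathbb{S}^{m-1}$ into its two ends $E_1=(1,+\infty)\times_f\mathbb{S}^{m-1}$ and $E_2=(-\infty,1)\times_f\mathbb{S}^{m-1}$ with respect to the compact set $\Omega=[-1,1]\times\mathbb{S}^{m-1}$. By Proposition \ref{prop_ends}, item (3), $\mathbb{R}\times_f\mathbb{S}^{m-1}$ is Feller if and only if each capped end $E_j\cup_{f_j}\mathbb{D}^m$ is Feller. The explicit construction described just before the corollary identifies these capped ends with the model manifolds $E_{g_1}$ and $E_{g_2}$, where the warping functions $g_1$ and $g_2$ are, by construction, equal to $r$ near $0$ (so that the smooth capping at the pole is admissible) and equal to $f(r)$, respectively $f(-r)$, for $r$ sufficiently large.

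Next I would apply Theorem \ref{th_model} to each model $E_{g_j}$. The conditions (\ref{model1}) and (\ref{model2}) defining the Feller property for a model involve only the behavior of the integrands $1/g^{m-1}(r)$ and $\bigl(\int_r^{+\infty} g^{m-1}\bigr)/g^{m-1}(r)$ at $r=+\infty$, so they are insensitive to any modification of $g_j$ on a compact set. In particular, replacing $g_1$ by $f(t)$ (for $t\gg 1$) and $g_2$ by $f(-t)$ (for $t\gg 1$) does not affect the validity of either (\ref{model1}) or (\ref{model2}). Hence $E_{g_1}$ is Feller iff $g(t)=f(t)$, $t\gg 1$, satisfies one of (\ref{model1}) or (\ref{model2}), and similarly $E_{g_2}$ is Feller iff $g(t)=f(-t)$, $t\gg 1$, satisfies one of these conditions.

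Combining these equivalences with Proposition \ref{prop_ends} yields the statement. There is no genuine obstacle here: the only point deserving mild care is the observation that the conditions of Theorem \ref{th_model} are truly asymptotic and therefore well-defined on any representative warping function which eventually agrees with $f(r)$ (or $f(-r)$). All other work has already been done in Proposition \ref{prop_ends} and Theorem \ref{th_model}.
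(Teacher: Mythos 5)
Your proof is correct and follows essentially the same route as the paper: the paper's Example preceding the corollary carries out exactly this decomposition into the two ends $E_1$, $E_2$, caps them with $\mathbb{D}^m$ to form the models $E_{g_1}$, $E_{g_2}$, and then invokes Proposition~\ref{prop_ends} together with Theorem~\ref{th_model}. Your added remark that the conditions (\ref{model1}) and (\ref{model2}) are purely asymptotic, hence unaffected by the compact modification near the pole, is the same (implicit) observation the paper relies on.
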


Application of this result will be given in Section \ref{section coverings}.

\section{Isoperimetry and the Feller property}

\label{section isoperimetry}

Using a general result by A. Grigor'yan, \cite{Gr-revista}, we are going to
show that a Riemannian manifold is Feller provided it satisfies a suitable
isoperimetric inequality. As a consequence we will deduce that minimal
submanifolds in Cartan-Hadamard manifolds (i.e., complete, simply connected
manifolds with non-positive sectional curvature), and in particular
Cartan-Hadamard manifolds themselves, are Feller. The latter result was proved
by Azencott, \cite{Az-bsmf}, using different methods based on comparison
arguments. Actually, in Section~\ref{section comparison} above we developed comparison
techniques which allowed us to prove the validity of the Feller property for
manifolds with a pole which are not necessarily Cartan-Hadamard.
\medskip

If $\Omega$ is a bounded domain with smooth boundary, we denote by
$\lambda_{1}(\Omega)$ the smallest Dirichlet eigenvalue of $-\Delta$ in
$\Omega$. Note that by domain monotonicity $\lambda_{1}(\Omega) $ is a
decreasing function of $\Omega$, and since a Riemannian manifold is locally
Euclidean, $\lambda_{1}(B_{r}(x_{o}))\sim c_{n} r^{-2}$ as $r\to0$.

\begin{theorem}
\label{th iso} Let $\left(  M,\left\langle ,\right\rangle \right)  $ be a
complete manifold satisfying the Faber-Krahn  isoperimetric
inequality
\begin{equation}
\lambda_{1}(\Omega) \geq\Lambda(\mathrm{vol}\Omega), \label{iso1}%
\end{equation}
for every bounded domain $\Omega\subset\subset M$, where $\Lambda$ is a
positive decreasing function such that $1/(s\Lambda(s))\in L^1(0+)$.
Let $V(t)$ be the function defined by the formula
\begin{equation}
\label{iso3}t= \int_{0}^{V(t)} \frac{ds}{s\Lambda(s)} ds,
\end{equation}
and assume that there exists $T\in(0, +\infty]$ such that
\begin{equation}
\label{iso4}\frac{tV^{\prime}(t)}{V(t)} \text{ is bounded for } t\leq2T \text{
and non-decreasing for } t>T.
\end{equation}
Then $M$ is Feller.
\end{theorem}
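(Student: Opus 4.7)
The strategy is to invoke the main theorem of Grigor'yan~\cite{Gr-revista} in order to convert the Faber-Krahn inequality (\ref{iso1}) into a pointwise Gaussian upper bound on the heat kernel of $M$, and then to exploit such a bound in the characterization of the Feller property provided by Corollary~\ref{cor_compsupp}. More precisely, the integrability assumption $1/(s\Lambda(s))\in L^{1}(0+)$ guarantees that the function $V(t)$ defined by (\ref{iso3}) is positive and strictly increasing on $(0,+\infty)$, while the regularity condition (\ref{iso4}) is exactly the doubling-type hypothesis under which Grigor'yan establishes an estimate of the form
\begin{equation*}
p_{t}(x,y)\leq\frac{C_{1}}{V(\delta t)}\exp\!\left(-\frac{d(x,y)^{2}}{C_{2}\,t}\right),\qquad\forall\,t>0,\ x,y\in M,
\end{equation*}
for suitable positive constants $C_{1},C_{2},\delta$ depending only on $\Lambda$ and $V$.

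Granted this heat kernel bound, the Feller property follows at once. Indeed, fix $p\in M$, $R>0$ and $t>0$; then for every $y\in B_{R}(p)$ and every $x\in M$ with $d(x,p)>R$ one has $d(x,y)\geq d(x,p)-R$, hence
\begin{equation*}
\int_{B_{R}(p)}p_{t}(x,y)\,dy\leq\frac{C_{1}\,\mathrm{vol}(B_{R}(p))}{V(\delta t)}\exp\!\left(-\frac{(d(x,p)-R)^{2}}{C_{2}\,t}\right)\longrightarrow 0
\end{equation*}
as $x\to\infty$, with $t$ and $R$ kept fixed. By Corollary~\ref{cor_compsupp} this is precisely what is needed to conclude that $M$ is Feller.

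The only delicate issue, and the step I consider the heart of the proof, is the verification that the hypotheses of Theorem~\ref{th iso} plug directly into Grigor'yan's framework and produce the displayed Gaussian bound. Schematically, the integrability of $1/(s\Lambda(s))$ near the origin is what makes $V$ well-defined for all small $t>0$; the boundedness of $tV'(t)/V(t)$ on $[0,2T]$ yields a small-scale doubling estimate for $V$ which, combined with (\ref{iso1}) and a Moser/Nash-type iteration, gives the on-diagonal bound $p_{t}(x,x)\leq C/V(ct)$; finally, the monotonicity of $tV'(t)/V(t)$ for $t>T$ is the ingredient needed to run Davies' exponential perturbation argument and upgrade the on-diagonal bound to the Gaussian one displayed above. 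Once these correspondences are in place the conclusion is essentially automatic, so the main obstacle is bookkeeping and careful quotation of \cite{Gr-revista} rather than producing a genuinely new idea.
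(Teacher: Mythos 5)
Your proposal is correct and follows essentially the same route as the paper: both invoke Theorem 5.1 of \cite{Gr-revista} to convert the Faber--Krahn inequality, via the function $V$ and hypothesis (\ref{iso4}), into the Gaussian upper bound $p_{t}(x,y)\leq C\,V(ct)^{-1}\exp\left(-d(x,y)^{2}/(Dt)\right)$, and then deduce the Feller property by letting $x\to\infty$ with $t$ fixed (the paper phrases this as dominated convergence for $P_{t}u$ with $u$ compactly supported, which is equivalent to your use of Corollary~\ref{cor_compsupp}). Your explicit estimate of $\int_{B_{R}(p)}p_{t}(x,y)\,dy$ and your reading of the roles of the hypotheses in Grigor'yan's framework are accurate.
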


\begin{proof}
Indeed, it follows from \cite{Gr-revista} Theorem 5.1 that the heat kernel
$p_{t}(x,y)$ of $M$ satisfies the Gaussian estimate
\begin{equation}
\label{iso5}p_{t}(x,y)\leq\frac{C}{V(ct)} \exp\left(  -\frac{d(x,y)^{2}}%
{Dt}\right)  ,
\end{equation}
for some constants $C, c>0$ and $D>4,$ and where $V(t)$ is the function
defined in (\ref{iso3}). A straightforward application of the dominated
convergence theorem shows that for every continuous function of compact
support and for every $t>0,$
\[
P_{t}u(x) = \int_{M} p_{t}(x,y)u(y)\,d\mathrm{vol}(y) \to0 \text{ as }
\ x\to\infty,
\]
and therefore $M$ is Feller.
\end{proof}

It follows from Cheeger's inequality, see \cite{Gr-revista} Proposition 2.4,
that if the isoperimetric inequality
\[
\mathrm{vol}\left(  \partial\Omega\right)  \geq g\left(  \mathrm{vol}\left(
\Omega\right)  \right)
\]
is valid for some $g$ such that $g(s)/s$ is non-increasing, then (\ref{iso1})
holds with $\Lambda(s)=\frac{1}{4}\left(  \frac{g(s)}{s}\right)  ^{2}$. Thus,
for instance, if $M$ supports an isoperimetric inequality of the type
\begin{equation}
\mathrm{vol}(\partial\Omega)\geq C\left(  \mathrm{vol}\Omega\right)  ^{1-1/p}
\label{iso6}%
\end{equation}
for some $p\geq m$, then (\ref{iso1}) holds with $\Lambda(s)=Cs^{-2/p}$ and
the associated function $V(t)=Ct^{p/2}$ satisfies condition (\ref{iso5}). Note
that (\ref{iso6}) is equivalent to the validity of the $L^{1}$-Sobolev
inequality
\begin{equation}
||\nabla u||_{L^{1}}\geq S_{1,p}||u||_{L^{\frac{p}{p-1}}},\,\forall u\in
C_{c}^{1}(M). \label{iso7}%
\end{equation}
On the other hand, it is know from work of G. Carron, \cite{C-Spectral}, that
the Faber-Krahn inequality
\[
\lambda_{1}(\Omega)\geq C(\mathrm{vol}\Omega)^{-2/p},\quad p\geq m,p\neq2,
\]
is equivalent to the $L^{2}$-Sobolev inequality
\begin{equation}
||\nabla u||_{L^{2}}\geq S_{2,p}||u||_{L^{\frac{2p}{p-2}}},\,\forall u\in
C_{c}^{1}(M). \label{iso7bis}%
\end{equation}
From these considerations we obtain the following

\begin{corollary}
\label{cor_iso} Let $M$ be isometrically immersed into a Cartan-Hadamard
manifold. If
its mean curvature vector field $H$ satisfies
\[
||H||_{L^{m}(M)}< +\infty,
\]
then $M$ is Feller. In particular,

\begin{itemize}
\item[(a)] Every Cartan-Hadamard manifold is Feller.

\item[(b)] Every complete, minimal submanifold in a Cartan-Hadamard manifold
is Feller.
\end{itemize}
\end{corollary}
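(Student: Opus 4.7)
The plan is to verify the Faber--Krahn profile $\Lambda(s) = c\,s^{-2/m}$ on $M$; since the associated function $V(t) = C\,t^{m/2}$ trivially satisfies condition~(\ref{iso4}), Theorem~\ref{th iso} will then deliver the Feller property. The main analytic input is the Hoffman--Spruck Sobolev inequality for isometric immersions into Cartan--Hadamard targets: there exists a dimensional constant $C = C(m) > 0$ such that
\[
\|u\|_{L^{m/(m-1)}(M)} \leq C \int_M \bigl(|\nabla u| + |H|\,|u|\bigr)\, d\mathrm{vol}, \qquad \forall\, u \in C_c^\infty(M).
\]

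The two special conclusions (a) and (b) correspond to the case $H \equiv 0$, and the above reduces exactly to the $L^1$-Sobolev inequality~(\ref{iso7}) with $p = m$. Approximating characteristic functions of smooth bounded domains converts it into the isoperimetric inequality~(\ref{iso6}) with $p=m$, and Cheeger's inequality promotes this to the Faber--Krahn bound $\lambda_1(\Omega) \geq c\,\mathrm{vol}(\Omega)^{-2/m}$; Theorem~\ref{th iso} closes the argument.

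For the general statement, the mean curvature term in the Hoffman--Spruck inequality is controlled via H\"older's inequality,
\[
\int_M |H|\,|u|\,d\mathrm{vol} \leq \|H\|_{L^m(M)}\,\|u\|_{L^{m/(m-1)}(M)},
\]
and absorbed on the left-hand side provided $C\,\|H\|_{L^m}$ is strictly less than $1$. This is not automatic globally, but the hypothesis $\|H\|_{L^m(M)} < +\infty$ together with dominated convergence applied to $|H|^m$ yields, for every prescribed $\varepsilon > 0$, a compact set $K_\varepsilon \subset M$ with $\|H\|_{L^m(M \setminus K_\varepsilon)} < \varepsilon$. Choosing $\varepsilon$ so small that $C\varepsilon < 1/2$ and restricting to test functions supported in $M \setminus K_\varepsilon$ leads to the clean $L^1$-Sobolev inequality~(\ref{iso7}) on the end, and hence to the Euclidean isoperimetric profile for every bounded $\Omega \subset\subset M \setminus K_\varepsilon$.

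The main obstacle is to bridge this \emph{exterior} Faber--Krahn estimate with the \emph{global} hypothesis of Theorem~\ref{th iso}. I would overcome this via the end decomposition of Proposition~\ref{prop_ends}: the Feller property depends only on the geometry at infinity, so $M$ is Feller if and only if the Riemannian double $\mathcal{D}(E_j)$ of each unbounded component $E_j$ of $M \setminus \Omega_0$ is Feller, where $\Omega_0 \supset K_\varepsilon$ is a smooth, relatively compact open set. On each such double the metric coincides with that of the corresponding end outside a fixed compact gluing zone, so the $L^1$-Sobolev inequality persists on the exterior, while the trivial local estimate $\lambda_1(\Omega) \geq c_m\,\mathrm{vol}(\Omega)^{-2/m}$ (valid whenever $\Omega$ is contained in a small metric ball) covers small volumes. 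Patching the two regimes yields the global Faber--Krahn profile required by Theorem~\ref{th iso} on each $\mathcal{D}(E_j)$, completing the argument.
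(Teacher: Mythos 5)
Your argument tracks the paper's proof through its first two steps: the Hoffman--Spruck inequality, the choice of a compact set $K_\varepsilon$ outside of which $\|H\|_{L^m}$ is small, and the absorption of the mean curvature term via H\"older's inequality to obtain the $L^1$-Sobolev inequality (\ref{iso7}) for test functions supported in $M\setminus K_\varepsilon$. The divergence, and the gap, is in how you pass from this \emph{exterior} inequality to the \emph{global} Faber--Krahn hypothesis of Theorem~\ref{th iso}. Your two regimes do not cover all bounded domains: a domain $\Omega$ of large volume that contains (or merely meets) the compact set $K_\varepsilon$ --- or, after doubling, the gluing zone of $\mathcal{D}(E_j)$ --- is neither contained in a small metric ball nor admits test functions supported in the exterior region, so neither the local estimate $\lambda_1(\Omega)\geq c_m\,\mathrm{vol}(\Omega)^{-2/m}$ nor the exterior Sobolev inequality applies to its first eigenfunction. ``Patching the two regimes'' would require cutting off the eigenfunction near the compact set and controlling the gradient error introduced by the cutoff, and this is precisely where the real work lies; it is not a formal consequence of the two partial estimates. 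The detour through Proposition~\ref{prop_ends} relocates the compact obstruction from $K_\varepsilon$ to the gluing collar of the double but does not remove it.

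The paper closes exactly this gap by invoking a nontrivial extension theorem: a variation of a result of Carron (\cite{C-Duke}; see \cite{PST-arxiv}, Theorem 13) asserting that an $L^1$-Sobolev inequality valid for functions compactly supported in the complement of a compact set extends, with a possibly larger constant, to all compactly supported functions on $M$. With that global inequality in hand, the chain isoperimetric inequality $\Rightarrow$ Cheeger $\Rightarrow$ Faber--Krahn $\Rightarrow$ Theorem~\ref{th iso} proceeds as you describe, and your treatment of the special cases (a) and (b), where $H\equiv 0$ gives the global Sobolev inequality directly with no exterior reduction needed, is correct. To repair your proof you should either cite such an extension result or supply the cutoff-and-absorption argument it encapsulates; as written, the crucial step is asserted rather than proved.
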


\begin{proof}
Indeed, according to \cite{HS-cpam}, there exists a constant $c_{m}$ depending
only on $m$ such that, for every bounded domain $C^{1}$ function with compact
support
\[
\left(  \int_{M} | u| ^{m/(m-1)} d\mathrm{vol} \right)  ^{(m-1)/m}\leq c_{m}
\int_{M} [|\nabla u| + |H| |u| ] d\mathrm{vol}.
\]
Since $|H|\in L^{m}(M)$, there exists a compact set $K$ such that
\[
\left(  \int_{M\setminus K} |H|^{m} d\mathrm{vol}\right)  ^{1/m}< \frac1{2
c_{m}},
\]
and applying H\"older inequality to the second summand on the RHS, we deduce
that the standard $L^{1}$ isoperimetric inequality
\begin{equation}
\label{iso8}\left(  \int_{M} | u| ^{m/(m-1)} d\mathrm{vol} \right)
^{(m-1)/m}\leq2 c_{m} \int_{M} |\nabla u| d\mathrm{vol}%
\end{equation}
holds for every $C^{1}$ function $u$ with compact support in $M\setminus K$.
A variation of a result of Carron, \cite{C-Duke} (see \cite{PST-arxiv},
Theorem 13) implies that the $L^{1}$-Sobolev inequality (\ref{iso8}) holds,
possibly with a larger constant, for every compactly supported function on
$M$. By the arguments preceding the statement of the corollary $M$ is Feller.
\end{proof}

\begin{remark}
\label{iso_rmk1} We note, in particular, that if $M$ is isometrically immersed
in a Cartan-Hadamard manifold and its mean curvature is in $L^{m}$ then $M$
has infinite volume.

We also observe that the same arguments show that if the $L^{2}$-isoperimetric
inequality (\ref{iso7bis}) holds off a compact set then it holds everywhere
and $M$ is Feller.

Finally, the above arguments show that, if $m\geq3$, then a minimal
submanifold in a Cartan-Hadamard space is non-parabolic, and its Green kernel
decays at infinity. Of course this fails in dimension two, as the example of
$\mathbb{R}^{2}$ shows.
\end{remark}

We end this section by noting that one of the most important category of
minimal surfaces is represented by those properly immersed in the ambient
space. Recall that a map between topological spaces is proper if the pre-image
of a compact set is compact. Thus, intrinsically divergent sequences cannot
accumulate at a finite point in the ambient space. In case $f:M\rightarrow N$
is a proper, minimal immersion of the complete $m$-dimensional manifold $M$
into the Cartan-Hadamard manifold $N$, the validity of the Feller property can
be also obtained using direct heat kernel comparisons. More precisely, we can
use the following result, \cite{CLY-AJM}, \cite{Ma-PAMS}.

\begin{theorem}
\label{th_heatcomparison}Let $f:M\rightarrow N$ be an $m$-dimensional,
complete, minimally immersed submanifold of the $n$-dimensional
Cartan-Hadamard manifold $N$. Let $D$ be a compact domain in $M$ with
Dirichlet heat kernel $p_{t}^{D}$. For any fixed $x\in D$, let $\mathbf{B}%
_{R}\left(  0 \right)  $ be the ball in $\mathbb{R}^{m}$ of
radius%
\[
R=\max_{y\in D}d_{N}\left(  f\left(  x\right)  ,f\left(  y\right)  \right)
,
\]
and let  $\mathbf{p}_{t}^{\mathbf{B}_{R}(0) }\left( u,v   \right) $ be the corresponding
Dirichlet heat kernel. Recalling  that
$\mathbf{p}_{t}^{\mathbf{B}_{R} }\left( 0,v   \right) =
\mathbf{p}_{t}^{\mathbf{B}_{R}(0) }\left( |v|   \right) $
depends only on $|v|$, then for every $t\geq0$ and for every $y\in D$,
\[
p_{t}^{D}\left(  x,y\right)
\leq\mathbf{p}_{t}^{\mathbf{B}_{R}(0)}\left(  d_N(f(x),f(y))\right)
.
\]
\end{theorem}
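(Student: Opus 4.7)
My plan is to use the parabolic maximum principle after transplanting the radial Dirichlet heat kernel of the Euclidean ball to a radial function on $D$ via the extrinsic distance. Setting $\rho(y) := d_N(f(x), f(y))$, I would define
$$\psi(y,t) := \mathbf{p}_t^{\mathbf{B}_R(0)}\bigl(\rho(y)\bigr),$$
which is well-defined on $D \times (0,+\infty)$ since $0 \leq \rho \leq R$ on $D$, and show that $\psi \geq p_t^D(x,\cdot)$ on $D \times (0,T]$ by verifying three properties: (i) $\psi$ is a supersolution of the heat equation on $D$, i.e.\ $\partial_t \psi \geq \Delta_M \psi$; (ii) $\psi \geq 0 = p_t^D(x,\cdot)$ on the lateral boundary $\partial D \times (0,T]$ (automatic since $\psi \geq 0$ everywhere on $D$ and vanishes precisely where $\rho=R$); and (iii) $\psi(\cdot, 0^+)$ majorizes $\delta_x$ in the appropriate distributional sense. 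Items (ii) and (iii) are routine once (i) is in hand.

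The heart of (i) is Laplacian comparison for the extrinsic distance. Since $N$ is Cartan--Hadamard, the standard Hessian comparison on $N$ gives
$$\mathrm{Hess}_N \rho(X,X) \geq \frac{1}{\rho}\bigl(|X|^2_N - \langle X,\nabla^N \rho\rangle^2_N\bigr), \quad \forall X \in T_{f(y)}N.$$
Tracing this inequality over an orthonormal frame of $T_yM$, viewed in $T_{f(y)}N$ through $f_*$, and exploiting the minimality hypothesis (which makes the mean-curvature contribution to $\Delta_M(\rho\circ f)$ vanish), yields
$$\Delta_M \rho \,\geq\, \frac{m-|\nabla_M \rho|^2}{\rho}, \qquad |\nabla_M \rho|^2 \leq 1.$$
Writing $u(r,t) = \mathbf{p}_t^{\mathbf{B}_R(0)}(r)$, which satisfies the radial Euclidean heat equation $u_t = u_{rr} + (m-1)u_r/r$ on $(0,R)\times(0,+\infty)$ together with the sign property $u_r \leq 0$ (the Dirichlet kernel being radially non-increasing), the chain rule $\Delta_M \psi = u_{rr}|\nabla_M \rho|^2 + u_r \Delta_M \rho$ combined with the bounds above yields, after algebraic rearrangement,
$$\partial_t \psi - \Delta_M \psi \,\geq\, (1-|\nabla_M \rho|^2)\bigl(u_{rr} - u_r/\rho\bigr).$$
So (i) reduces to the \emph{structural monotonicity} that $r \mapsto u_r(r,t)/r$ be non-decreasing on $(0,R)$ for each $t>0$.

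Here lies the main obstacle: one must establish this structural monotonicity for the radial Dirichlet heat kernel on the Euclidean ball. For the free-space Gaussian kernel it is immediate from the explicit formula (since $u_r/r = -u/(2t)$ and $u$ is radially decreasing), but for the Dirichlet kernel it is a genuine analytic point. I would handle it by expanding $u$ in the Bessel-type radial Dirichlet eigenfunctions of $\mathbf{B}_R(0)$, which satisfy the ODE $(r^{m-1}\phi')' = -\lambda\, r^{m-1}\phi$, and using the known sign and monotonicity properties of these eigenfunctions; alternatively one can apply a parabolic maximum principle directly to the auxiliary function $v := u_r/r$, which satisfies a related heat-type equation. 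With (i) established, (iii) follows from a standard approximation argument: replace $\delta_x$ by an $L^1$-bump $v_\varepsilon \in C_c(D)$ supported near $x$ together with its radial counterpart $\bar v_\varepsilon$ on $\mathbf{B}_R(0)$ chosen so that $\bar v_\varepsilon(\rho(y)) \geq v_\varepsilon(y)$; the maximum principle from (i) gives $(P_t^D v_\varepsilon)(y) \leq (P_t^{\mathbf{B}_R(0)} \bar v_\varepsilon)(\rho(y))$, and letting $\varepsilon \to 0^+$ produces the heat-kernel inequality claimed.
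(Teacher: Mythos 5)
The paper does not actually prove Theorem \ref{th_heatcomparison}: it quotes it from Cheng--Li--Yau \cite{CLY-AJM} and Markvorsen \cite{Ma-PAMS}, so there is no internal proof to compare against. Your strategy is the one used in those references: transplant the radial Dirichlet kernel of the Euclidean ball via the extrinsic distance $\rho=d_N(f(x),f(\cdot))$, use Hessian comparison in the Cartan--Hadamard ambient space together with minimality to control $\Delta_M\rho$, and reduce the supersolution property to a structural inequality for the one--dimensional kernel. Your computation is correct: the comparison does reduce to $(1-|\nabla_M\rho|^2)\bigl(u_{rr}-u_r/\rho\bigr)\geq 0$, hence to the monotonicity of $r\mapsto u_r/r$, which is precisely (the Euclidean case of) Lemma~1 of \cite{CLY-AJM}.

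That said, there are two genuine gaps. The first is that the structural monotonicity is the entire analytic content of the theorem and you do not establish it. Appealing to ``known sign and monotonicity properties'' of the radial Dirichlet eigenfunctions cannot work termwise: beyond the ground state these eigenfunctions change sign in $(0,R)$, and the short--time behaviour of the kernel is not visible in any finite part of the expansion. The alternative route is more promising --- $v=u_r/r$ does satisfy the radial heat equation $v_t=v_{rr}+\tfrac{m+1}{r}v_r$ in dimension $m+2$ --- but proving $v_r\geq 0$ still requires a maximum--principle argument with a careful treatment of the endpoints $r=0$ and $r=R$ and of the initial singularity; this is where the real work of \cite{CLY-AJM} lies and it cannot be dismissed in a sentence. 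The second gap is in step (iii): to deduce $(P_t^{D}v_\varepsilon)(y)\leq (P_t^{\mathbf{B}_R(0)}\bar v_\varepsilon)(\rho(y))$ from the maximum principle you need the function $(y,t)\mapsto (P_t^{\mathbf{B}_R(0)}\bar v_\varepsilon)(\rho(y))$ to be a supersolution on $D$, which by your own chain--rule computation requires $\partial_r(P_t\bar v_\varepsilon)\leq 0$ and $\partial_r\bigl(\partial_r(P_t\bar v_\varepsilon)/r\bigr)\geq 0$ for the solution with initial datum $\bar v_\varepsilon$. These properties do not follow from the corresponding properties of the kernel with pole at the center; they must be proved as statements about preservation of radial monotonicity and convexity under the radial Dirichlet heat flow (which is, in fact, the natural way to prove the kernel lemma itself, by letting $\bar v_\varepsilon\to\delta_0$). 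Alternatively, the initial condition can be handled distributionally by showing $\liminf_{t\to 0^+}\int_D \mathbf{p}_t^{\mathbf{B}_R(0)}(\rho(y))\phi(y)\,dy\geq\phi(x)$, using $\rho(y)\leq d_M(x,y)$, the radial monotonicity of the kernel, and the local volume expansion of small geodesic balls in $M$; either way the step is not routine as claimed.
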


Now, let $x,y\in M$ and let $\left\{  D_{n}\right\}  $ be a smooth exhaustion
of $M$ satisfying $x,y\in D_{0}$. Then, by Theorem \ref{th_heatcomparison} and
by the parabolic comparison principle%
\begin{equation}
p_{t}^{D_{n}}\left(  x,y\right)  \leq\mathbf{p}_{t}^{\mathbf{B}_{R_{n}
}(0)  }\left(  \left(d_N(f(x),f(y))\right)\right)
\leq\mathbf{p}_{t}^{\mathbb{R}^{m}}
\left(  \left(d_N(f(x),f(y))\right)\right)  ,\label{ms1}%
\end{equation}
for every $t\geq0$. On the other hand,%
\[
p_{t}^{D_{n}}\left(  x,y\right)  \rightarrow p_{t}^{M}\left(  x,y\right)
\text{, as }n\rightarrow+\infty.
\]
Therefore, taking limits in (\ref{ms1}) we obtain that, for every $x,y\in M$
and for every $t\geq0$,%
\begin{equation}
p_{t}^{M}\left(  x,y\right)  \leq\mathbf{p}_{t}^{\mathbb{R}^{m}}\left(
\left(d_N(f(x),f(y))\right) \right)  .\label{ms2}%
\end{equation}
It follows that, for any $R>0$,%
\begin{equation}
0\leq\int_{B_{R}}p_{t}^{M}\left(  x,y\right)  d y
\leq\int_{B_{R}}\mathbf{p}_{t}^{\mathbb{R}^{m}}
\left( \left(d_N(f(x),f(y))\right) \right)  d  y.\label{ms3}%
\end{equation}
Since, by assumption, $f$ is proper we have that $f\left(  x\right)
\rightarrow\infty$ as $x\rightarrow\infty$.
Whence, taking limits in (\ref{ms3}) and using the dominated convergence
theorem on the RHS we conclude%
\[
\lim_{x\rightarrow\infty}\int_{B_{R}}p_{t}^{M}\left(  x,y\right)
dy  =0.
\]
According to Corollary  \ref{cor_compsupp}, this proves that $M$ is Feller.

\section{Feller property and covering spaces\label{section coverings}}

In this section we address the following

\begin{problem}
\label{question_cov}Suppose we are given a Riemannian covering $\pi
:(\widehat{M},\widehat{\left\langle ,\right\rangle })\rightarrow\left(
M,\left\langle ,\right\rangle \right)  $. Is there any relation between the
validity of the Feller property on the covering space $\widehat{M}$ and on the
base manifold $M$?
\end{problem}

By comparison, recall that $M$ is stochastically complete if and only if so is
$\widehat{M}$. Passing from the covering to the base is easy via the use of
bounded, $\lambda$-subharmonic functions. The converse seems to be
non-trivial. A proof using stochastic differential equations can be found in
the book by D. Elworthy, \cite{El-book}, but it would be nice to have a
deterministic proof of this fact. Intuitively, Elworthy proof relies on the
fact that (similarly to what happens for geodesics) Brownian paths in $M$ lift
to Brownian paths in $\widehat{M}$ and, conversely, Brownian paths in
$\widehat{M}$ project to Brownian paths in $M$.

As for parabolicity, the situation is quite different. Using subharmonic
functions it is easy to see that if the covering manifold $\widehat{M}$ is
parabolic then the base manifold $M$ is also parabolic. In general, the
converse is not true, as shown e.g. by the twice punctured complex plane. This
latter is a parabolic manifold which is universally covered by the
(non-parabolic) Poincar\`{e} disk.\smallskip

Let us now consider the Feller property. To begin with, consider the easiest
case of coverings with a finite number of sheets. As expected, we have the following

\begin{proposition}
\label{prop_cov1}Let \textit{ }$\pi:(\widehat{M},\widehat{\left\langle
,\right\rangle })\rightarrow\left(  M,\left\langle ,\right\rangle \right)  $
be a $k$-fold Riemannian covering, $k<+\infty$. Then, $\widehat{M}$ is Feller
if and only if $M$ is Feller.
\end{proposition}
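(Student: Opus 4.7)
\medskip
\noindent\emph{Proof sketch.}
My plan is to invoke the elliptic characterization of the Feller property given by Theorem~\ref{th_equiv}: both upstairs and downstairs, Feller amounts to the decay at infinity of the minimal positive solution of the exterior problem $\Delta h = \lambda h$, $h|_{\partial \Omega}=1$. Fix a relatively compact open set $\Omega \subset M$ with smooth boundary and set $\widehat\Omega = \pi^{-1}(\Omega)$. Since $\pi$ is a $k$-sheeted (hence proper) local isometry, $\widehat\Omega$ is relatively compact in $\widehat M$ with smooth boundary $\pi^{-1}(\partial\Omega)$. Let $h$ and $\widehat h$ denote the minimal positive solutions of the corresponding exterior problems. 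The two implications will be handled asymmetrically: by lifting in one direction, by a fiber-trace pushforward in the other.

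For the direction $M$ Feller $\Rightarrow$ $\widehat M$ Feller, the pullback $h\circ\pi$ is a positive solution of $\Delta(h\circ\pi)=\lambda(h\circ\pi)$ on $\widehat M\setminus\overline{\widehat\Omega}$ with boundary value $1$, so by minimality $\widehat h \leq h\circ\pi$. Properness of $\pi$ forces $\pi(\widehat x)\to\infty$ in $M$ whenever $\widehat x\to\infty$ in $\widehat M$: otherwise $\pi(\widehat x_n)$ would subsequentially accumulate at some $y\in M$, and then the tail of $\widehat x_n$ would eventually lie in the compact set $\pi^{-1}(K)$ for a compact neighborhood $K$ of $y$, contradicting the divergence. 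Hence $h\circ\pi\to 0$ at infinity, and $\widehat h$ follows.

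For the converse, I would descend $\widehat h$ to $M$ by the fiber trace
\[
\tilde h(x) \;=\; \sum_{\widehat y\in\pi^{-1}(x)} \widehat h(\widehat y), \qquad x\in M\setminus\overline\Omega.
\]
Since the fiber has exactly $k$ elements the sum is well defined; on any evenly covered open set $U$ one has $\tilde h|_U = \sum_{j=1}^k \widehat h\circ\sigma_j$ for smooth local sections $\sigma_j$ of $\pi$, so $\tilde h$ is smooth and, $\pi$ being a local isometry, satisfies $\Delta \tilde h = \lambda \tilde h$. Moreover $\tilde h\equiv k$ on $\partial\Omega$, whence $\tilde h/k$ is a positive exterior solution of (\ref{equiv1}) on $M$ with boundary value $1$; minimality gives $h\leq \tilde h/k$. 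Properness again implies that if $x_n\to\infty$ in $M$ then every element of the (finite) fiber $\pi^{-1}(x_n)$ diverges in $\widehat M$, so each of the $k$ summands of $\tilde h(x_n)$ tends to zero and $h(x_n)\to 0$ follows.

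The main obstacle, and the place where finiteness of $k$ is essential, is the descending step: for infinite-sheeted coverings the fiber sum need not converge, nor even be globally well defined, so the elementary fiber-trace construction collapses. This asymmetry is precisely why the general case of Problem~\ref{question_cov} is subtle, and why probabilistic or parabolic techniques become necessary beyond the finite-sheeted regime.
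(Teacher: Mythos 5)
Your argument is correct, and it rests on the same pillars as the paper's proof --- the elliptic characterization of Theorem~\ref{th_equiv} together with the fact that a finite covering is a proper local isometry --- but the descending direction is implemented differently. The paper proves the single exact identity $\widehat h = h\circ\pi$: since $\pi^{-1}(\Omega_n)$ is a relatively compact exhaustion of $\widehat M$ and the Dirichlet problem on each compact annulus has a unique solution, the approximants satisfy $\widehat h_n = h_n\circ\pi$, and the identity passes to the limit; both implications then follow at once because $h(x)=\widehat h(\widehat x)$ for any lift and properness identifies the two notions of going to infinity. You instead prove two one-sided minimality comparisons: $\widehat h\le h\circ\pi$ (half of the paper's identity) and $h\le \tilde h/k$, where $\tilde h$ is the fiber trace of $\widehat h$. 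The fiber trace is a clean device --- it is the elliptic analogue of the observation at the end of Section~\ref{section coverings} that Bordoni's inequality (\ref{cov4}) becomes an equality for $k$-fold coverings --- at the mild cost of checking smoothness and the equation through local sections, where the paper's identity makes descent trivial. One point to tighten: the ``$k$ summands'' vary with $x_n$, so the precise statement is that given $\varepsilon>0$ one chooses a compact $\widehat K$ with $\widehat h<\varepsilon/k$ off $\widehat K$ and notes $\tilde h<\varepsilon$ off the compact set $\pi(\widehat K)$. Also, your closing remark slightly misplaces the asymmetry: for infinite coverings the ascending direction \emph{also} resists this elliptic scheme (since $\pi^{-1}(\Omega)$ is no longer relatively compact), which is why Theorem~\ref{th_cov2} is proved with heat kernels, while the descending implication is genuinely false in general, as Example~\ref{ex_covering1} shows.
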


\begin{proof}
Let $\Omega\subset\subset M$ be fixed and set $\widehat{\Omega}=\pi
^{-1}\left(  \Omega\right)  \subset\subset\widehat{M}$. Let $\lambda>0$ be a
chosen number. We are going to show that the minimal, positive $\lambda
$-harmonic function $h$ on $M\backslash\Omega$ with boundary data $\left.
h\right\vert _{\partial\Omega}=1$ \ is related to the minimal, positive
$\lambda$-harmonic function $\widehat{h}$ \textit{on }$\widehat{M}%
\backslash\widehat{\Omega}$ with boundary data $\left.  \widehat{h}\right\vert
_{\partial\widehat{\Omega}}=1$\textit{ by}%
\begin{equation}
\widehat{h}=h\circ\pi\mathit{.} \label{cov0}%
\end{equation}
Indeed, let $\Omega_{n}\nearrow M$ be a compact exhaustion and, for each $n$,
let $h_{n}$ be the solution of the Dirichlet problem%
\[
\left\{
\begin{array}
[c]{l}%
\Delta h_{n}=\lambda h_{n}\text{ on }\Omega_{n}\backslash\bar{\Omega}\\
h_{n}=1\text{ on }\partial\Omega\\
h_{n}=0\text{ on }\partial\Omega_{n}.
\end{array}
\right.
\]
Then, \textit{ }$\widehat{\Omega}_{n}=\pi^{-1}\left(  \Omega_{n}\right)
\nearrow\widehat{M}$ is a compact exhaustion and, since $\pi$ is a local
isometry,%
\[
\widehat{h}_{n}=h_{n}\circ\pi
\]
solves the analogous Dirichlet problem on $\widehat{\Omega}_{n}\backslash
\widehat{\Omega}$. The desired relation between $h$ and $\widehat{h}$ follows
by letting $n\rightarrow+\infty$.

Now, suppose $\widehat{M}$ is Feller. We show that $M$ must be Feller, i.e.,
for every $\varepsilon>0$ there exists a compact set $K\subset M$ such that,
for every $x\in M\backslash K$, $h\left(  x\right)  <\varepsilon$. Let
$\varepsilon>0$ be fixed. Then, by assumption, there is a compact $\widehat
{K}\subset\widehat{M}$ such that $\widehat{h}\left(  \widehat{x}\right)
<\varepsilon$ \ for every $\widehat{x}\in\widehat{M}\backslash\widehat{K}$.
Define $K=\pi(\widehat{K})\subset M$ and the further compact subset
$\widehat{K}_{1}=\pi^{-1}\left(  K\right)  \subset\widehat{M}$. Clearly,
$\widehat{K}\subset\widehat{K}_{1}$ so that $\widehat{h}\left(  \widehat
{x}\right)  <\varepsilon$ whenever $\widehat{x}\in\widehat{M}\backslash
\widehat{K}_{1}$. It follows from (\ref{cov0}) that, for every $x\in
M\backslash K$, $h\left(  x\right)  =\widehat{h}\left(  \widehat{x}\right)
<\varepsilon$, where $\widehat{x}\in\pi^{-1}\left(  x\right)  $ is chosen arbitrarily.

Assume now that $M$ is Feller. We show that $\widehat{M}$ is Feller. To this
end, having fixed $\varepsilon>0$, let $K\subset M$ be a large compact set
such that $h\left(  x\right)  <\varepsilon$ for every $x\in M\backslash K$.
Let us consider the compact set $\widehat{K}=\pi^{-1}\left(  K\right)
\subset\widehat{M}$. If $\widehat{x}\in\widehat{M}\backslash\widehat{K}$ then
$x=\pi\left(  \widehat{x}\right)  \in M\backslash K$ and, according to
(\ref{cov0}) we deduce $\widehat{h}\left(  \widehat{x}\right)  =h\left(
x\right)  <\varepsilon$, completing the proof.
\end{proof}

Observe that there are two key points in the above proof:

\begin{enumerate}
\item[(A)] a $k$-fold covering map is proper;

\item[(B)] for a $k$-fold covering, conditions $\widehat{x}\rightarrow
\widehat{\infty}$ and $x=\pi\left(  \widehat{x}\right)  \rightarrow\infty$ are
essentially the same.
\end{enumerate}

Obviously, the situation changes drastically if we consider an $\infty$-fold
Riemannian covering $\pi:\widehat{M}\rightarrow M$. Violating (A) yields that
the Feller property does not descend on the base manifold. In the next example
we show that%
\[
\widehat{M}\text{ Feller }\not \Longrightarrow M\text{ Feller.}%
\]

\begin{example}
\label{ex_covering1}Consider the $2$-dimensional warped product $M=\mathbb{R}%
\times_{f}\mathbb{S}^{1}$ where $f\left(  t\right)  =e^{t^{3}}$. According to
Corollary \ref{cor_warped}, we can use Example \ref{ex_versus1} to deduce that
$M$ is not Feller. Note that the Gaussian curvature of $M$ is given by%
\[
K\left(  t,\theta\right)  =-\frac{f^{\prime\prime}\left(  t\right)  }{f\left(
t\right)  }\leq0.
\]
Therefore the universal covering $\widehat{M}$ is Cartan-Hadamard, hence
Feller by Theorem \ref{th_comparison}.
\end{example}
By contrast,the reverse implication
\[
M\text{ is Feller }{\Longrightarrow}\widehat{M}\text{ is Feller}%
\]
holds, and it is the content of the main theorem of this section.

It is not obvious how to achieve the proof using the
elliptic point of view, so we adopt the heat kernel point of view.
We begin with a simple lemma that will be used in the proof. Recall
that since manifolds are second countable, $\pi_1(M)$ is necessarily
countable. Moreover the compact open topology induced by its action on
$\widehat{M}$ coincides with the discrete topology. To say that the sequence
$\gamma_k \to \infty$ in the compact open topology means that
$\gamma_k$ is eventually in the complement of any finite set.

\begin{lemma}
\label{lemma-cov} Let $\widehat{B}$ be a ball in $\widehat{M}$.
Then for every $\widehat{x}\in \widehat{M}$ and every sequence
$\{\gamma_k\}\to \infty $ in $ \pi_1(M)$
\begin{equation}
\label{lemma-cov-eq1}
\lim_{k} \int_{\widehat{B}}
\widehat{p}_t(\gamma_k\widehat{x},\widehat{y})d\widehat{y}=0.
\end{equation}
\end{lemma}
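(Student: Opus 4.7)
The plan is to exploit the fact that $\pi_{1}(M)$ acts by isometries on $\widehat{M}$, so the heat kernel satisfies the invariance $\widehat{p}_{t}(\gamma\widehat{x},\widehat{y})=\widehat{p}_{t}(\widehat{x},\gamma^{-1}\widehat{y})$ for every deck transformation $\gamma$. Using the change of variables $\widehat{z}=\gamma_{k}^{-1}\widehat{y}$ (which is an isometry and hence volume preserving), the integral to be estimated becomes
\[
\int_{\widehat{B}}\widehat{p}_{t}(\gamma_{k}\widehat{x},\widehat{y})\,d\widehat{y}=\int_{\gamma_{k}^{-1}\widehat{B}}\widehat{p}_{t}(\widehat{x},\widehat{z})\,d\widehat{z},
\]
so the task reduces to showing that the translates $\gamma_{k}^{-1}\widehat{B}$ disperse enough along the sequence to make these integrals small.

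The crucial step will be to prove that the entire series indexed by the deck group
\[
\sum_{\gamma\in\pi_{1}(M)}\int_{\gamma^{-1}\widehat{B}}\widehat{p}_{t}(\widehat{x},\widehat{z})\,d\widehat{z}
\]
is finite. For this I would establish a uniform bounded-overlap estimate for the characteristic functions $\chi_{\gamma^{-1}\widehat{B}}$. Since $\pi$ is a local isometry and $\overline{\widehat{B}}$ is compact, I can cover $\overline{\widehat{B}}$ by finitely many open sets $U_{1},\dots,U_{N}$ on each of which $\pi$ is injective; consequently every fibre of $\pi$ intersects $\overline{\widehat{B}}$ in at most $N$ points. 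Reading this intersection as the orbit $\pi_{1}(M)\widehat{z}\cap\widehat{B}$, one obtains
\[
\sum_{\gamma\in\pi_{1}(M)}\chi_{\gamma^{-1}\widehat{B}}(\widehat{z})\leq N\quad\text{for every }\widehat{z}\in\widehat{M}.
\]
Applying Fubini and the sub-stochastic property (c) of the heat kernel on $\widehat{M}$ then bounds the series by $N\int_{\widehat{M}}\widehat{p}_{t}(\widehat{x},\widehat{z})\,d\widehat{z}\leq N$.

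To conclude, I would use that $\gamma_{k}\to\infty$ in the discrete topology of $\pi_{1}(M)$ means that each fixed element is attained only finitely often by the sequence. Combined with the convergence of a nonnegative series, this forces the terms $\int_{\gamma_{k}^{-1}\widehat{B}}\widehat{p}_{t}(\widehat{x},\widehat{z})\,d\widehat{z}$ along the subsequence to tend to zero, which is \eqref{lemma-cov-eq1} after undoing the change of variables. The main technical point is the uniform bounded-overlap estimate, but this is really just a compactness argument packaging the fact that $\pi$ is a local isometry and that the deck action is properly discontinuous; the rest is standard manipulation of the heat kernel by isometry invariance and Fubini.
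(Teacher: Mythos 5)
Your proof is correct and follows essentially the same route as the paper: isometry invariance of the minimal heat kernel $\widehat{p}_t(\gamma\widehat{x},\widehat{y})=\widehat{p}_t(\widehat{x},\gamma^{-1}\widehat{y})$, a change of variables, and the observation that the deck-group translates of $\widehat{B}$ carry total heat-kernel mass at most a constant times $\int_{\widehat{M}}\widehat{p}_t\leq 1$, so the individual terms must vanish along any sequence that eventually leaves every finite subset of $\pi_1(M)$. The only point where you diverge is that your uniform bounded-overlap estimate $\sum_{\gamma}\chi_{\gamma^{-1}\widehat{B}}\leq N$ treats a general ball in one stroke, whereas the paper first assumes $\widehat{B}$ lies in a fundamental domain (so the translates are disjoint and the sum is bounded by $1$) and then defers the general case to an unspecified ``standard covering argument''---your estimate is exactly the content of that argument made precise.
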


\begin{proof}
Assume first that $\widehat{B}= \widehat{B}_r(\widehat{z})$ is contained in a fundamental
domain, so that $\pi :\widehat{B}_r(\widehat{z})\to B_r(\pi(\widehat{z}))$ is an isometry
and
\begin{equation*}
\pi^{-1}(B_r(\pi(\widehat{z})) = \cup_{\gamma\in \pi_1(M)}\gamma \widehat{B}_r(\widehat{z})
\end{equation*}
is a disjoint union. It follows that for every $\widehat{x}$
\begin{equation*}
\sum_{\gamma\in \pi_1(M)} \int_{\gamma \widehat{B}_r(\widehat{z})}
\widehat{p}_t(\widehat{x}, \widehat{y}) d\widehat{y}\leq
\int_{ \widehat{M}}
\widehat{p}_t(\widehat{x}, \widehat{y}) d\widehat{y}\leq 1.
\end{equation*}
In particular,
\begin{equation*}
\int_{\gamma^{-1} \widehat{B}_r(\widehat{z})}
\widehat{p}_t(\widehat{x}, \widehat{y}) d\widehat{y}\to 0 \text{ as
} \gamma \to \infty,
\end{equation*}
and therefore
\begin{equation*}
\int_{\gamma^{-1}_k \widehat{B}_r(\widehat{z})}
\widehat{p}_t(\widehat{x}, \widehat{y}) d\widehat{y}\to 0 \text{ as
} k \to \infty.
\end{equation*}
On the other hand, since $\pi_1(M)$ acts isometrically on $\widehat
M$, for every $\gamma$,\, $\widehat{p}_t(\widehat{x}, \gamma^{-1} \widehat {y})$ solves the
heat equation and converges to $\delta_{\gamma\widehat{x}}$ as $t\to 0+$, so, by minimality,
\begin{equation*}
\widehat{p}_t (\gamma \widehat{x}, \widehat{y}) =
\widehat{p}_t (\widehat{x}, \gamma^{-1} \widehat{y}).
\end{equation*}
This, together with a change of variables gives
\begin{equation*}
\int_{ \widehat{B}_r(\widehat{z})}
\widehat{p}_t(\gamma_k\widehat{x},  \widehat{y}) d\widehat{y}
=
\int_{ \widehat{B}_r(\widehat{z})}
\widehat{p}_t(\widehat{x}, \gamma^{-1}_k \widehat{y}) d\widehat{y}
=
\int_{\gamma^{-1}_k  \widehat{B}_r(\widehat{z})}
\widehat{p}_t(\widehat{x}, \widehat{y}) d\widehat{y}.
\end{equation*}
and (\ref{lemma-cov-eq1}) is proved in this case. The case where $\widehat{B}$
is not contained in a fundamental domain is dealt with using a
standard covering argument.
\end{proof}

\begin{theorem}
\label{th_cov2} If $M$ is Feller then so is $\widehat{M}$.
\end{theorem}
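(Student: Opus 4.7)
The plan is to verify the Feller property for $\widehat M$ through Corollary~\ref{cor_compsupp}, reducing the task to showing that
\[
\widehat u_t(\widehat x) := \int_{\widehat B_R(\widehat p)} \widehat p_t(\widehat x,\widehat y)\,d\widehat y \longrightarrow 0 \qquad \text{as } \widehat x \to \widehat\infty
\]
for every fixed $\widehat p \in \widehat M$ and $R>0$. The central tool is the projection identity for heat kernels already exploited in Lemma~\ref{lemma-cov}: integrating the relation $p_t(\pi(\widehat x),y) = \sum_{\gamma\in\pi_1(M)} \widehat p_t(\widehat x, \gamma \widehat y)$ over a fundamental domain yields
\[
\int_{\pi^{-1}(A)} \widehat p_t(\widehat x,\widehat y)\,d\widehat y = \int_A p_t(\pi(\widehat x),y)\,dy
\]
for every measurable $A\subset M$. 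Since $\pi$ is a local isometry we have $\widehat B_R(\widehat p)\subset \pi^{-1}(B_R(p))$ with $p=\pi(\widehat p)$, so that
\[
\widehat u_t(\widehat x) \leq G(\widehat x) := \int_{B_R(p)} p_t(\pi(\widehat x),y)\,dy.
\]

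Given a sequence $\widehat x_k\to\widehat\infty$, the next step is to pass to a subsequence so that one of two alternatives holds: either $x_k:=\pi(\widehat x_k)\to\infty$ in $M$, or $\{x_k\}$ is contained in a compact set $K\subset M$. In the first alternative the conclusion is immediate: by the bound above and the Feller property of $M$ applied through Corollary~\ref{cor_compsupp}, $\widehat u_t(\widehat x_k)\leq G(\widehat x_k)\to 0$.

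The second alternative carries the main difficulty. I would fix a bounded fundamental domain $F_0$ for the action of $\pi_1(M)$ on $\widehat M$ and let $\widetilde K = \overline{F_0\cap \pi^{-1}(K)}$, a compact subset of $\widehat M$. Writing $\widehat x_k = \gamma_k \widehat z_k$ with $\widehat z_k\in\widetilde K$, proper discontinuity of the deck action together with $\widehat x_k\to\widehat\infty$ forces $\gamma_k\to\infty$ in $\pi_1(M)$. Using deck invariance $\widehat p_t(\gamma_k\widehat z_k,\widehat y) = \widehat p_t(\widehat z_k,\gamma_k^{-1}\widehat y)$ and the measure-preserving change of variables $\widehat y\mapsto\gamma_k^{-1}\widehat y$,
\[
\widehat u_t(\widehat x_k) = \int_{\widehat B_R(\gamma_k^{-1}\widehat p)} \widehat p_t(\widehat z_k,\widehat y)\,d\widehat y.
\]
Lemma~\ref{lemma-cov} delivers the desired decay for each fixed base point, but here $\widehat z_k$ varies throughout the compact set $\widetilde K$. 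To obtain the required uniformity I would decompose $\pi^{-1}(B_R(p)) = \bigsqcup_{\alpha\in\pi_1(M)} \alpha\widetilde B$ with $\widetilde B = F_0\cap \pi^{-1}(B_R(p))$, and set $G_\alpha(\widehat z) = \int_{\alpha\widetilde B}\widehat p_t(\widehat z,\widehat y)\,d\widehat y$. Each $G_\alpha$ is non-negative and continuous, and $\sum_\alpha G_\alpha = G$ is itself continuous, so Dini's theorem applied to the monotone partial sums on $\widetilde K$ produces, for every $\varepsilon>0$, a finite $F\subset\pi_1(M)$ with $\sum_{\alpha\notin F} G_\alpha(\widehat z) < \varepsilon$ uniformly in $\widehat z\in\widetilde K$. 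Since each $\alpha\widetilde B$ is bounded and $\gamma_k^{-1}\widehat p\to\widehat\infty$, for $k$ large enough $\widehat B_R(\gamma_k^{-1}\widehat p)$ is disjoint from $\alpha\widetilde B$ for every $\alpha\in F$, whence $\widehat u_t(\widehat x_k)\leq \sum_{\alpha\notin F} G_\alpha(\widehat z_k) < \varepsilon$.

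The main obstacle is exactly this passage from the pointwise statement of Lemma~\ref{lemma-cov} to a uniform version on a compact set of base points; the projection identity and the isometric invariance of $\widehat p_t$ under the deck group are the two structural inputs that reduce everything to this uniformity.
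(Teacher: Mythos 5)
Your overall architecture matches the paper's: split a sequence $\widehat x_k\to\infty$ according to whether $\pi(\widehat x_k)$ escapes to infinity (where the Feller property of $M$ enters through a cover-to-base heat kernel comparison) or stays in a compact set (where deck transformations move the problem to a fixed compact set of base points and one needs a uniform version of Lemma~\ref{lemma-cov}). The one genuine gap is your ``projection identity'' $p_t(\pi(\widehat x),y)=\sum_{\gamma}\widehat p_t(\widehat x,\gamma\widehat y)$. It is not ``already exploited in Lemma~\ref{lemma-cov}'' --- that lemma uses only $\int_{\widehat M}\widehat p_t\leq 1$ and the deck-invariance $\widehat p_t(\gamma\widehat x,\widehat y)=\widehat p_t(\widehat x,\gamma^{-1}\widehat y)$ --- and as an identity it is false in general: the paper invokes a theorem of Bordoni \cite{Bo-BSMF}, which gives precisely the inequality $\int_{\widehat M}\widehat p_t(\widehat x,\widehat y)(\varphi\circ\pi)(\widehat y)\,d\widehat y\leq\int_M p_t(\pi(\widehat x),y)\varphi(y)\,dy$, with equality only under stochastic completeness. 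Fortunately the direction you actually use (namely $\widehat u_t\leq G$ and $\sum_\alpha G_\alpha\leq G$) is the one Bordoni provides, so the gap is repaired by citing that result rather than ``integrating the relation over a fundamental domain'' --- a derivation that presupposes the pointwise identity you would need to prove. A second, smaller problem: a \emph{bounded} fundamental domain need not exist when $M$ is non-compact (think of $\mathbb{R}^2\to\mathbb{R}\times S^1$). What you actually need is that $F_0\cap\pi^{-1}(K)$ and $F_0\cap\pi^{-1}(B_R(p))$ are bounded, which does hold for a Dirichlet domain; alternatively one can sidestep fundamental domains for the first of these sets, as the paper does, by choosing a ball $\widehat B$ with $\pi(\widehat B)\supset K$ and writing $\widehat x_k=\gamma_k\widehat z_k$ with $\widehat z_k\in\widehat B$.

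Once these points are patched, your treatment of the second alternative is correct and genuinely different from the paper's. You obtain uniformity of the decay over the compact set $\widetilde K$ of base points by applying Dini's theorem to the partial sums of $\sum_\alpha G_\alpha$, whose limit $\widehat z\mapsto\int_{\pi^{-1}(B_R(p))}\widehat p_t(\widehat z,\widehat y)\,d\widehat y$ is continuous by parabolic regularity of $P_t$ applied to a bounded measurable function --- a point worth stating explicitly, since Dini requires continuity of the limit. The paper instead keeps Lemma~\ref{lemma-cov} pointwise and upgrades it via the parabolic mean value inequality plus dominated convergence. Your route is more elementary (no mean value inequality) and in effect absorbs Lemma~\ref{lemma-cov} into a uniform statement; the paper's route avoids fundamental domains and the continuity discussion altogether. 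Both correctly isolate the fact, noted in the paper's closing remark, that the Feller property of $M$ is not needed in this second alternative.
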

\begin{proof}
We need to show that if $\widehat{B}_o$ is a ball is $\widehat{M}$,
then, for every sequence $\widehat x_k\to \infty$ in $\widehat M$,
\begin{equation*}
\int_{\widehat{B}_o}
\widehat{p}_t(\widehat{x}_k, \widehat{y}) d\widehat{y}\to 0\,\,
 \text{ as } k \to \infty.
\end{equation*}
Fix $\epsilon>0.$ According to a result of M.~Bordoni, \cite{Bo-BSMF}, for every
$0\leq\varphi\in C_{c}\left(  M\right)  $, we have
\begin{equation}
\label{cov4}
\int_{\widehat{M}}\widehat{p}_{t}\left(  \widehat{x},\widehat{y}\right)
(\varphi\circ\pi\left(\widehat{y}\right))  d\widehat{y}
\leq
\int_{M}p_{t}\left(  \pi\left(  \widehat{x}\right)  ,y\right)
\varphi\left(
y\right)  dy
\end{equation}
(and  equality holds if  $M$, or
equivalently $\widehat{M}$, is stochastically complete), so, if
$\varphi$ is a cut-off function such that $\pi(\widehat{B}_o )\subset
\{\varphi = 1\} $,
\begin{equation*}
\int_{\widehat{B}_o}\widehat{p}_{t}\left(  \widehat{x},\widehat{y}\right)
 d\widehat{y}
\leq
\int_{M}p_{t}\left(  \pi\left(  \widehat{x}\right)  ,y\right)
\varphi\left(
y\right)  dy.
\end{equation*}
Since $M$ is Feller, there exists a ball $D\subset
M$ such that, if $\pi(\widehat{x}_k)\not\in \overline D$ then the
integral on the right hand side is less than $\epsilon,$ and
therefore
\begin{equation*}
\label{cov3}
\int_{\widehat{B}_o}\widehat{p}_{t}\left(  \widehat{x}_k,\widehat{y}\right)
 d\widehat{y}
< \epsilon.
\end{equation*}
Next let $\widehat{B}$ be a ball in $\widehat{M}$ such that
$\pi(\widehat{B}) \supset \overline D$, so that $\pi^{-1}(\overline D)
\subset \cup_{\gamma\in \pi_1(M)} \gamma \widehat{B}$.

Without loss of generality, we may therefore assume that $\pi(\widehat{x}_k)\in
\overline D $ for every $k$, so that there exist  $\widehat{z}_k\in
\widehat{B}$ and a sequence $\gamma_k\in \pi_1(M)$ such that $\widehat{x}_k = \gamma_k
\widehat{z}_k$. Since $\widehat{x}_k\to \infty $ in $\widehat{M}$, $\gamma_k\to \infty $ in
$\pi_1(M)$. Moreover, since $\gamma_k$ is an isometry of $\widehat{M}$,
\begin{equation*}
(\widehat{z},t)\to\int_{\widehat{B}_o} \widehat p_t(\gamma_k \widehat{z},
\widehat{y}) d\widehat{y}
\end{equation*}
is a solution of the heat equation on $M$, by the parabolic mean
value inequality (see, e.g., \cite{Saloff-Aspects}), there exists a
constant $C(\widehat B, t)$ depending only on the geometry of $2\widehat{B}$
and $t$ such that, for every $\widehat{z}_k\in \widehat{B}$
\begin{equation*}
\int_{\widehat{B}_o} \widehat p_t(\gamma_k \widehat{z}_k, \widehat{y}) d\widehat{y}
\leq C(\widehat B, t) \int_{t/2}^t ds \int_{2\widehat{B}}
d\widehat{z'} \int_{\widehat{B}_o} \widehat p_s(\gamma_k \widehat{z'},
\widehat{y}) d\widehat{y},
\end{equation*}
and the right hand side tends to zero as $k\to \infty$ by Lemma~\ref{lemma-cov} and
the dominated convergence theorem.
\end{proof}

\medskip
We remark that the proof actually shows that if $\widehat{x}_k \to
\infty$ but $\pi(\widehat{x}_k)$ is contained in a compact set for
every $k$ then
\begin{equation*}
\int_{\widehat{B}_o} \widehat p_t(\gamma_k \widehat{z}_k, \widehat{y})
d\widehat{y} \to 0 \text{ as } k\to \infty
\end{equation*}
without having to assume that $M$ be Feller.

We also note that the proof of Proposition 2.4 in \cite{Bo-BSMF} can be easily
adapted to show that (\ref{cov4}) always holds with equality in the case of
$k$-fold coverings. This could be used to give an alternative  proof of
Proposition~\ref{prop_cov1}.

\section{On the curvature condition by E. Hsu and some remarks on the role of
volumes\label{section hsu}}

As we mentioned in the Introduction, a complete Riemannian manifold is Feller
provided a suitable control from below on its Ricci tensor is assumed. In this
direction, the best known result in the literature is the following theorem by
Hsu, \cite{Hsu-annals}, \cite{Hsu-book}, which extends previous work by Yau,
Dodziuk and Li-Karp.

\begin{theorem}
\label{th_hsu}Let $\left(  M,\left\langle ,\right\rangle \right)  $ be a
complete, non compact Riemannian manifold of dimension $\dim M=m.$ Assume that%
\begin{equation}
^{M}Ric\geq-\left(  m-1\right)  G^{2}\left(  r\left(  x\right)  \right)  ,
\label{hsu1}%
\end{equation}
where $r\left(  x\right)  =dist\left(  x,o\right)  $ is the distance function
from a fixed reference point $o\in M$ and $G$ is a positive, increasing
function on $[0,+\infty)$ satisfying%
\begin{equation}
\frac{1}{G}\notin L^{1}\left(  +\infty\right)  . \label{hsu2}%
\end{equation}
Then $M$ is Feller.
\end{theorem}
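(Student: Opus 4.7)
The elliptic strategy of Theorem~\ref{th_comparison}(a) does not transfer directly here, because the Ricci lower bound yields only the \emph{upper} Laplacian comparison $\Delta r\le (m-1)g'/g$, with $g$ the Jacobi function solving $g''=G^{2}g$, $g(0)=0$, $g'(0)=1$. When composed with a decreasing radial profile $w(r)$ coming from the model ODE, this inequality produces $\Delta(w\circ r)\ge \lambda\,(w\circ r)$, i.e., a subsolution rather than the supersolution required by Proposition~\ref{prop_comp}. So a straightforward radial barrier argument as in Section~\ref{section comparison} cannot work, and the proof must take a detour.

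From the model side, two pieces of information are available. First, since $G$ is positive and non-decreasing, $G(r)\ge G(0)>0$, and Sturm comparison with $\sinh(G(0)r)/G(0)$ forces $g$ to grow at least exponentially; in particular $1/g^{m-1}\in L^{1}(+\infty)$, and so the model $M_{g}^{m}$ is itself Feller by Theorem~\ref{th_model}. Second, the Hsu condition $1/G\notin L^{1}(+\infty)$, combined with the volume upper bound that the Ricci lower bound yields via Bishop--Gromov, implies through Grigor'yan's volume test that $M$ is stochastically complete, so by Corollary~\ref{th_uniqueness} the minimal solution of~(\ref{equiv1}) is in fact the unique bounded positive solution of that problem.

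My plan is then to switch to the parabolic viewpoint by way of Corollary~\ref{cor_compsupp}: it suffices to prove that $\int_{B_{R}(o)}p_{t}(x,y)\,dy\to 0$ as $x\to\infty$, for every $R>0$ and $t>0$. The idea is to combine the Feller property of $M_{g}^{m}$ with the heat kernel tools available under Ricci lower bounds (Li--Yau gradient and Harnack estimates, or Saloff-Coste-type Gaussian upper bounds on balls) to derive enough decay of $p_{t}^{M}(x,y)$ as $x\to\infty$ so that the integral vanishes by dominated convergence. The main obstacle is to produce a Gaussian upper bound on $p_{t}^{M}$ that does not degenerate under the \emph{unbounded} Ricci lower bound prescribed by $G(r)$: the growth restriction $1/G\notin L^{1}(+\infty)$ must be exploited to control the deterioration of the Gaussian factor against the volume growth. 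This is precisely the step where Hsu's condition plays an irreplaceable role, taking the place of the probabilistic comparison of radial Brownian motions employed in the original proof.
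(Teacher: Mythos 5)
Your proposal is not a proof: the decisive step is explicitly left open. After correctly diagnosing why the elliptic barrier method of Section~\ref{section comparison} fails here (under a Ricci lower bound the Laplacian comparison reads $\Delta r\le (m-1)g^{\prime}/g$, so a decreasing radial profile built from the model ODE becomes a \emph{subsolution}, which is useless for Proposition~\ref{prop_comp}), you propose to verify the condition of Corollary~\ref{cor_compsupp} via a Gaussian upper bound on $p_{t}$, and then write that the main obstacle is to produce such a bound that does not degenerate under the unbounded Ricci lower bound. That obstacle \emph{is} the theorem. The available analytic heat-kernel upper bounds under $\mathrm{Ric}\ge -(m-1)G^{2}(r)$ carry constants depending exponentially on $\sup_{B_{r}}G^{2}$, and balancing them against the Bishop--Gromov volume growth only recovers the weaker hypotheses of Yau, Dodziuk and Li--Karp (essentially $G$ of at most linear growth), not Hsu's condition $1/G\notin L^{1}(+\infty)$. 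In fact the paper does not prove Theorem~\ref{th_hsu} at all: it quotes it from Hsu, whose argument estimates the probability that Brownian motion started far away reaches a compact set before a fixed time, and the remark immediately following the statement records that no deterministic proof is known, even for model manifolds. The route you sketch is therefore precisely what the authors identify as an open problem, and nothing in your outline closes it.

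Two of your auxiliary observations, though correct, do not feed into any completed argument. The fact that the comparison model $M_{g}^{m}$ (with $g^{\prime\prime}=G^{2}g$) is Feller is of no help, because under a Ricci lower bound the transfer of the Feller property goes the other way: Theorem~\ref{th_comparison}(b) only allows one to conclude that $M$ is \emph{not} Feller when the model has finite volume and is not Feller; there is no statement transferring the Feller property from the model to $M$ under (\ref{hsu1}). Likewise, stochastic completeness of $M$ (which does follow from $1/G\notin L^{1}(+\infty)$ via Bishop--Gromov and Grigor'yan's volume test, as you indicate) yields via Corollary~\ref{th_uniqueness} the uniqueness of the bounded positive solution of (\ref{equiv1}), but uniqueness says nothing about whether that solution decays at infinity, which is what the Feller property requires.
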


\begin{remark}
Unlike a similar result for the validity of the stochastic completeness, this
theorem is not a comparison-type theorem. As indicated in Section
\ref{section comparison}, curvature comparisons should go exactly in the
opposite direction (the same direction of heat kernel comparisons). Hsu
theorem, like its \textquotedblleft predecessors\textquotedblright, is a
genuine estimating result. The proof supplied by Hsu is very probabilistic in
nature. To the best of our knowledge there is no deterministic proof (neither
for general manifolds nor for the easiest case of models) and we feel its
discovery would be very interesting.
\end{remark}

According to Theorem \ref{th_hsu}, $M$ is Feller provided its Ricci curvature
does not decay to $-\infty$ too much quickly. To fix ideas one may think of
$G\left(  t\right)  $ as the function $t\Pi_{j=1}^{n}\log^{\left(  j\right)
}\left(  t\right)  $, where $\log^{\left(  j\right)  }\left(  t\right)  $
denotes the $j$-th iterated logarithm and $n\in\mathbb{N}$ is arbitrarily
large. Using the results of Section \ref{section models} we are able to prove
that such curvature condition is, in some sense, sharp.

\begin{example}
\label{ex_hsu1}Let $G:\mathbb{R}\rightarrow\mathbb{R}$ be a smooth, increasing
function satisfying%
\begin{equation}
\text{(a) }G\left(  r\right)  >0\text{, (b) }\limsup_{r\rightarrow+\infty
}\frac{G^{^{\prime}}\left(  r\right)  }{G\left(  r\right)  ^{2}}%
=\alpha<+\infty\text{, (c) }\frac{1}{G\left(  r\right)  }\in L^{1}\left(
+\infty\right)  . \label{hsu3}%
\end{equation}
Fix $\beta>\alpha$ and let $g\left(  t\right)  :\mathbb{R}\rightarrow
\mathbb{R}$ be any smooth, positive, odd function such that $g^{\prime}\left(
0\right)  =1$ and $g\left(  r\right)  =\exp\left(  -\beta\int_{0}^{r}G\left(
t\right)  dt\right)  $ for $r\geq10$. Let us consider the $m$-dimensional
model manifold $M_{g}^{m}$ with warping function $g$. Direct computations show
that the radial sectional curvature of $M_{g}^{m}$ satisfies%
\begin{equation}
K_{rad}\left(  r\right)  =-\frac{g^{\prime\prime}\left(  r\right)  }{g\left(
r\right)  }\leq-\beta\left(  \beta-\alpha\right)  G\left(  r\right)
^{2}\text{, }r>>1. \label{hsu4}%
\end{equation}
We claim the validity of the following conditions%
\begin{equation}
\text{(a) }g\left(  r\right)  ^{m-1}\in L^{1}\left(  +\infty\right)  \text{,
(b) }\frac{\int_{r}^{+\infty}g\left(  t\right)  ^{m-1}dt}{g\left(  r\right)
^{m-1}}\in L^{1}\left(  +\infty\right)  . \label{hsu5}%
\end{equation}
Indeed, since $G$ is positive and increasing,%
\[
g\left(  r\right)  ^{m-1}\leq\exp\left(  -\left(  m-1\right)  \beta G\left(
0\right)  r\right)  ,\text{ }r>>1,
\]
thus proving (\ref{hsu5}) (a). On the other hand, note that, for $r>>1$,%
\begin{align*}
G\left(  r\right)  \int_{r}^{+\infty}g\left(  t\right)  ^{m-1}dt  &  \leq
\int_{r}^{+\infty}G\left(  t\right)  \exp\left(  -\left(  m-1\right)
\beta\int_{0}^{r}G\left(  t\right)  dt\right) \\
&  =\frac{1}{\left(  m-1\right)  \beta}\exp\left(  -\left(  m-1\right)
\beta\int_{0}^{r}G\left(  t\right)  dt\right)  ,
\end{align*}
so that%
\[
G\left(  r\right)  \int_{r}^{+\infty}g\left(  t\right)  ^{m-1}dt\rightarrow
0\text{, as }r\rightarrow+\infty.
\]
Therefore, de l'Hospital rule applies and gives%
\begin{align*}
\limsup_{r\rightarrow+\infty}\frac{G\left(  r\right)  \int_{r}^{+\infty
}g\left(  t\right)  ^{m-1}dt}{g\left(  r\right)  ^{m-1}}  &  \leq
\limsup_{r\rightarrow+\infty}\frac{G^{\prime}\left(  r\right)  \int
_{r}^{+\infty}g\left(  t\right)  ^{m-1}dt-G\left(  r\right)  g\left(
r\right)  ^{m-1}}{\left(  m-1\right)  g\left(  r\right)  ^{m-2}g^{\prime
}\left(  r\right)  }\\
&  \leq\limsup_{r\rightarrow+\infty}\frac{-G\left(  r\right)  g\left(
r\right)  ^{m-1}}{\left(  m-1\right)  g\left(  r\right)  ^{m-2}g^{\prime
}\left(  r\right)  }\\
&  =\frac{1}{\beta\left(  m-1\right)  }%
\end{align*}
which implies%
\[
\frac{\int_{r}^{+\infty}g\left(  t\right)  ^{m-1}dt}{g\left(  r\right)
^{m-1}}\leq\frac{1}{\beta\left(  m-1\right)  }\frac{1}{G\left(  r\right)  }\in
L^{1}\left(  +\infty\right)  .
\]
This shows the validity of (\ref{hsu5}) (b).

Now, condition (\ref{hsu5}) (a) implies $1/g^{m-1}\notin L^{1}\left(
+\infty\right)  $. Therefore, from (\ref{hsu5}) (b) and applying Theorem
\ref{th_model} we conclude that $M_{g}^{m}$ \ is not Feller.
\end{example}

According to Theorem \ref{th_hsu}, and in view of the above example, the
search of more general (or even new) conditions ensuring the validity of the
Feller property should not involve pointwise curvature lower bounds. For
instance, variations on the theme could be obtained using integral curvature
bounds. More importantly, one is naturally led to ask whether a solely volume
growth condition suffices. In this respect, we quote the following intriguing
question addressed by Li and Karp, \cite{LK}.

\begin{problem}
\label{problem_LK}Let $\left(  M,\left\langle \,,\right\rangle \right)  $ be a
complete Riemannian manifold. Assume that, for some reference origin $o\in M$,%
\begin{equation}
\mathrm{vol}\left(  B_{R+1}\left(  o\right)  \backslash B_{R}\left(  o\right)
\right)  \geq e^{-AR^{2}}\text{,} \label{LK1}%
\end{equation}
for every $R>>1$ and for some constant $A>0$. Does $M$ satisfy the Feller property?
\end{problem}

Actually, as a consequence of Corollary \ref{cor_warped}, it seems that the
volume growth (decay) of a general complete manifold is not so tightly related
to the validity of the Feller property. Indeed, one can always take
$\mathbb{R}\times_{f}\mathbb{S}^{m-1}$ and prescribe the asymptotic behavior
of $f\left(  t\right)  $ at $-\infty$ and $+\infty$ in such a way that the
volume growth is slow (even finite) or fast but at least one of the ends is
not Feller. This suggests that possible conditions on volumes, such as those
specified in Problem \ref{problem_LK}, should be localized on each of the ends
of the manifold.

\begin{problem}
\label{problem_LK2}Let $\left(  M,\left\langle \,,\right\rangle \right)  $ be
a complete Riemannian manifold \ which is connected at infinity and satisfies
(\ref{LK1}). Is $M$ Feller?
\end{problem}

To the best of our knowledge, only specific examples are used to conjecture a
positive answer to this question, \cite{LK}. For instance, it is reasonable to
approach the problem by first assuming that $M$ has only one end which is a
cylindrical end, namely $E=\left(  0,+\infty\right)  \times_{f}\Sigma$ \ for
some compact manifold $\Sigma$. However, so far, even in the easiest case
$\Sigma=\mathbb{S}^{m-1}$ it is unknown whether condition (\ref{LK1}) implies
the validity of the Feller property.

\end{document}